\documentclass{article}
\usepackage[utf8]{inputenc}
\usepackage{mycommands}
\usepackage{geometry}
\usepackage[numbers]{natbib}
\usepackage{enumitem}
\geometry{top=1in,bottom=1in,left=1in,right=1in}

\usepackage{setspace}
\doublespacing

% Command for this document 

\title{Estimation of subsidiary performance metrics under optimal policies}
\author{%
  Zhaoqi Li\footnote{Department of Statistics, University of Washington, \texttt{zli9@uw.edu}}%
  \and Houssam Nassif\footnote{Meta, \texttt{houssam.nassif@gmail.com}}%
  \and Alex Luedtke\footnote{Department of Statistics, University of Washington,  \texttt{aluedtke@uw.edu}}
}
\date{\today}

\begin{document}
\allowdisplaybreaks % allow displays to span multiple pages

\maketitle

\begin{abstract}
    In policy learning, the goal is typically to optimize a primary performance metric, but other subsidiary metrics often also warrant attention. This paper presents two strategies for evaluating these subsidiary metrics under a policy that is optimal for the primary one. The first relies on a novel margin condition that facilitates Wald-type inference. Under this and other regularity conditions, we show that the one-step corrected estimator is efficient. Despite the utility of this margin condition, it places strong restrictions on how the subsidiary metric behaves for nearly optimal policies, which may not hold in practice. We therefore introduce alternative, two-stage strategies that do not require a margin condition. The first stage constructs a set of candidate policies and the second builds a uniform confidence interval over this set. We provide numerical simulations to evaluate the performance of these methods in different scenarios. 
\end{abstract}

\section{Introduction}

\subsection{Literature Review}

Many fields are interested in learning policies that map from individual-level characteristics to a choice of action.  
The policies that result in the best possible mean of a subsequent outcome are often referred to as optimal policies \cite{athey2021policy}. 
For example, in biomedical sciences the action may take the form of a treatment allocation and the outcome may be disease remission \cite{ling2021heterogeneous}, whereas in digital marketing the action and outcome may be a recommendation and click-through rate, respectively \cite{hill2017efficient}. 
There have been a variety of methods developed for estimating optimal policies. These methods include regression-based estimators such as Q-learning \cite{qian2011performance}, outcome-weighted learning \cite{zhao2012estimating}, and doubly robust approaches \citep{dudik2011doubly,zhang2013robust}, among others. 
Performance guarantees for these methods have been established by several authors \cite{athey2021policy,qian2011performance,zhao2012estimating,luedtke2020performance}. 

An estimated policy is unlikely to be implemented unless confidence intervals characterizing its performance are available \cite{shi2020statistical}. 
These performance metrics may take the form of the remission rate of all patients or the click-through rate of all customers. 
In both of these examples, the metric is the value of the optimal policy in the population, better known as the optimal value. 
Inference about the optimal value is well-studied when there is only one outcome of interest  \cite{luedtke2020performance,liu2021efficient}. 
Several works have shown that one-step estimators and targeted minimum loss based estimators are efficient under conditions \cite{van2015targeted,chambaz2017targeted}. In particular, these works require a non-exceptional law condition that states that the conditional average action effect does not concentrate mass at zero \cite{robins2004optimal}. 
Alternative strategies have been developed for constructing confidence intervals for the optimal value even when this condition fails \cite{Alex16,chakraborty2013inference}. 

Though most existing methodological works on policy learning focus on optimizing for a single performance metric, in real-world settings there are often multiple other subsidiary performance metrics that are also of interest \cite{boominathan2020treatment,bica2021real}. 
These metrics may correspond to different summaries of the outcome, such as the median, rather than the mean, and time to disease remission \cite{phillips2020statistical}. 
Alternatively, they may summarize several different outcomes rather than just a single one. 
For example, when learning a treatment allocation, symptom reduction may be considered alongside prognosis \cite{freemantle2003composite}. 
Most existing approaches for incorporating multiple outcomes involve combining them into a composite outcome and then using policy learning methods designed for single-outcome settings \cite{butler2018incorporating}.
In settings where the actions recommended by experts are recorded in the dataset, \citeauthor{murray2016utility} provided a means to construct a composite outcome in an automated fashion \cite{murray2016utility}. 
However, when expert recommendations are not available, composite-outcome-based approaches require investigators to construct the composite outcome in some other way, which often ends up being somewhat arbitrary \cite{luckett2021estimation}. 
Some alternative approaches do not require the construction of a composite outcome. One such approach involves learning a policy that returns a set of recommended actions, rather than a single one \cite{laber2014set}. Each of the actions in this set should yield a desirable result for at least some of the outcomes. For settings where there is a primary outcome of interest, another approach involves using other secondary outcomes to define constraints that any selected policy must satisfy \cite{linn2015chapter,wang2018learning}. 

In cases where a single outcome is of primary interest and others are only of secondary interest, a preferred approach may be to optimize only for this one outcome, while still making inferences about the effect of the policy on the subsidiary outcomes. 
For example, just as the side effects of any new medical intervention must be assessed along with its effect on the primary outcome of interest \citep{FDA2006}, the side effects of a new treatment policy should be assessed as well \citep{linn2015chapter}. 
As another example, if a company optimizes a policy for customer acquisition, it must also consider the impact the policy will have on customer retention \cite{afeche2017customer}. 
In this work, we provide a systematic approach for assessing the impact of a policy that is optimized for some primary outcome on other, subsidiary outcomes.

\subsection{Notation and objectives}
Let $X\in\X$ be a feature, $A\in\{0,1\}$ a binary action, and $Y\in\Y$ an outcome that is observed after the action. 
This outcome may be multivariate. Let $\M$ be a nonparametric model consisting of possible joint distributions $P$ of $(X,A,Y)$. Our sample consists of $n$ independent and identically distributed draws $(X_i,A_i,Y_i)_{i=1}^n$ from $P_0\in \M$. 
Let $\Pi$ be a set of policies $\X\to\{0,1\}$ that take as input a feature and take action 0 or 1. 
For a given policy $\pi\in\Pi$, let $\Omega_{\pi}(P)$ be a real-valued primary performance metric for the policy $\pi$ under sampling from $P$, where we assume that larger values of this metric are considered preferable. Further let $\Psi_{\pi}(P)$ be a real-valued subsidiary performance metric for $\pi$ under $P$. 
For example, when $Y$ is a primary-subsidiary outcome pair $(Y^\star,Y^\dagger)\in\mathbb{R}^2$, these metrics could be the covariate-adjusted means of these two outcomes \cite{luedtke2020performance,Alex16}, that is, $\Omega_{\pi}(P)=\int \E_P[Y^\star|A=\pi(x),X=x] dP(x)$, and $\Psi_{\pi}(P)=\int \E_P[Y^\dagger|A=\pi(x),X=x] dP(x)$. Alternatively, the outcome $Y$ may be univariate and the primary performance metric may be equal to the mean $\Omega_{\pi}(P)=\int \E_P[Y|A=\pi(x),X=x] dP(x)$ while the subsidiary metric may be equal to the covariate-adjusted probability that the outcome exceeds a specified value $t$, namely $ \Psi_{\pi}(P)=\int P\{Y> t|A=\pi(x),X=x\} dP(x)$. 
We refer to $\Omega_\pi(P_0)$ and $\Psi_\pi(P_0)$ as the $\Omega$-performance and $\Psi$-performance of the policy $\pi$.

For $P\in\mathcal{P}$, let $\Pi_P^*$ denote the set of optimal policy with respect to the primary performance metric, that is, $\Pi_P^*:=\{\pi\in\Pi:\Omega_\pi(P)=\sup_{\pi'\in\Pi}\Omega_{\pi'}(P)\}$. We denote a generic element of this set by $\pi_P^*$. We refer to elements of $\Pi^*:=\Pi_{P_0}^*$ as $\Omega$-optimal policies and denote a generic element by $\pi^*$. 
We assume throughout that $\Pi^*$ is nonempty, and note that in general this set may contain more than one policy. 
We are interested in making inferences about the subsidiary performance metric $\Psi_{\pi}(P_0)$ for $\Omega$-optimal policies. 
Letting $\psi_0^\ell=\inf_{\pi\in\Pi^*}\Psi_\pi(P_0)$ and $\psi_0^u=\sup_{\pi\in\Pi^*}\Psi_\pi(P_0)$, our objective is to construct a confidence interval for the range of possible $\Psi$-performances under an $\Omega$-optimal policy, that is, develop a confidence interval that is a superset of $[\psi_0^\ell,\psi_0^u]$ with a specified asymptotic probability.

When there is only one $\Omega$-optimal policy, our objective is to determine the $\Psi$-performance of this policy, denoted by $\psi_0:=\psi_0^\ell=\psi_0^u$. When there are multiple $\Omega$-optimal policies, $\psi_0^\ell$ may be less than $\psi_0^u$, and the upper and lower bounds of our interval inform on the most extreme $\Psi$-performances that can be attained from an $\Omega$-optimal policy. For example, if larger values of $\Psi$ are preferable, then the upper confidence bound on $\psi_0^u$ informs about the best achievable $\Psi$-performance by an $\Omega$-optimal policy. 
Such a policy can be shown to be one of several policies that fall along the Pareto front of the two-objective optimization problem that seeks to maximize $\Omega$ and $\Psi$. The Pareto front denotes the set of policies for which there is not a policy that performs better with respect to one of the two metrics and no worse with respect to the other. 
The difference between inferring about $\psi_0^u$ and multi-objective optimization is that the policy with the best $\Psi$ performance is primarily optimized with respect to one performance metric $\Omega$, while multi-objective optimization optimizes several performance metrics simultaneously \cite{gunantara2018review,deb2014multi,bentley1998finding}.

We present our proposed confidence intervals in the next two sections. 
When doing so, we consider two separate cases. 
In Section~\ref{sec:inf_under_margin_cond}, we begin with an easier and more specialized case, where the performance metrics $\Omega_\pi$ and $\Psi_\pi$ are assumed to be the covariate-adjusted means of a primary outcome ($Y^\star$) and subsidiary outcome ($Y^\dagger$), there is assumed to be a unique $\Omega$-optimal policy $\pi^*$ over an unrestricted policy class $\Pi$, and a certain margin condition holds. 
In Section~\ref{sec:general_inference}, we move on to a harder and more general case, where $\Omega_\pi$ and $\Psi_\pi$ are arbitrary smooth parameters and there may be multiple $\Omega$-optimal policies. 

\section{Wald-type inference under a margin assumption}\label{sec:inf_under_margin_cond}

In this section, we focus on the case where $\Omega_{\pi}(P)=\int \E_P[Y^\star|A=\pi(x),X=x] dP(x)$ and $\Psi_{\pi}(P)=\int \E_P[Y^\dagger|A=\pi(x),X=x] dP(x)$ for a primary-subsidiary outcome pair $(Y^\star,Y^\dagger)$ and, moreover, the policy class $\Pi$ is unrestricted. 
We aim to build on existing works that evaluate the $\Omega$-performance of an $\Omega$-optimal policy \citep{van2015targeted,Alex16}. These works have shown that a simple estimation strategy is efficient under a non-exceptional law condition that makes the $\Omega$-optimal rule unique \citep{robins2004optimal}. In this case, $\psi_0^\ell=\psi_0^u$ and we write $\psi_0=\psi_0^\ell=\psi_0^u$. 
This strategy first obtains an estimate $\widehat{\pi}$ of the $\Omega$-optimal rule, and then constructs a standard one-step estimator of $\Omega_{\widehat{\pi}}(P_0)$.
Heuristically speaking, pursuing estimation of $\Omega_{\widehat{\pi}}(P_0)$, rather than $\Omega_{\pi^*}(P_0)$, introduces only negligible bias because $\widehat{\pi}$ should be a near-maximizer of $\Omega_\pi(P_0)$. Hence, similarly to the fact that $f(x)-f(x^*)=o(|x^*-x|)$ for a differentiable function $f : \mathbb{R}\rightarrow\mathbb{R}$ with maximizer $x^*$, the error induced by replacing $\pi^*$ by $\widehat{\pi}$ in the functional $\pi\mapsto \Omega_{\pi}(P_0)$ should be second-order. 
In this section, we study the extent to which a standard one-step estimator of $\Psi_{\widehat{\pi}}(P_0)$ will yield an asymptotically normal and efficient estimator of $\Psi(P_0)$. 
This study is important since, if the standard one-step estimator satisfies these properties under only mild conditions, then there is little reason to develop alternative methods.

We now discuss a key condition that we will require to establish the efficiency of a standard one-step estimator for $\Psi(P_0)$, along with the validity of corresponding Wald-type confidence intervals. 
Define the function $q_b(P)(x):=\E_P[Y^\star|A=1,X=x]-\E_P[Y^\star|A=0,X=x]$ to be the conditional average treatment effect on the primary outcome and $s_b(P)(x):=\E_P[Y^\dag|A=1,X=x]-\E_P[Y^\dag|A=0,X=x]$ to be the conditional average treatment effect on the subsidiary outcome. We refer to these functions as the primary CATE and subsidiary CATE, respectively. 
We use the shorthand notation $q_{b,0}:=q_b(P_0)$ and $s_{b,0}:=s_b(P_0)$. 

\begin{condition}[Margin condition between $Y^\dag$ and $Y^\star$]\label{cond:margin_ZY}
For some $C_1>0$ and $\zeta>2$,
\begin{align}
P_0\left(\left|s_{b,0}(X)\right|\geq C_1t\left|q_{b,0}(X)\right|\right)\leq t^{-\zeta},\qquad \ \textnormal{ for all }t>1. \label{eq:margin_ZYDisplay}
\end{align}
\end{condition}
When this condition holds, $\left|q_{b,0}(X)\right|\not=0$ with $P_0$-probability one.
Hence, this condition is a strengthening of the usual non-exceptional law condition \citep{robins2004optimal} that is required when the $\Psi$ and $\Omega$ performance metrics coincide. 
To ensure the validity of the standard one-step estimator, some form of strengthening appears to be needed to make up for the fact that $\pi^*$ is defined as a maximizer in $\pi$ of $\Omega_\pi(P_0)$, rather than $\Psi_\pi(P_0)$. 
Indeed, the estimation error of this estimator $\hat{\psi}_{\hat{\pi}}$ can be decomposed as 
\begin{align*}
    \widehat{\psi}_{\hat{\pi}} - \Psi_{\pi^*}(P_0)&= \left[\widehat{\psi}_{\hat{\pi}} - \Psi_{\widehat{\pi}}(P_0) \right] + \left[ \Psi_{\widehat{\pi}}(P_0) - \Psi_{\pi^*}(P_0)\right].
\end{align*}
The fact that $\hat{\psi}_{\hat{\pi}}$ is a one-step estimator of $\Psi_{\widehat{\pi}}(P_0)$ should imply that the first term will be small. However, since $\pi^*$ is not necessarily an optimizer for $\Psi$, it is possible that $\Omega_{\hat{\pi}}(P_0)$ is close to $\Omega_{\pi^*}(P_0)$ while $\Psi_{\hat{\pi}}(P_0)$ is far from $\Psi_{\pi^*}(P_0)$ --- see Figure \ref{fig:phi_vs_psi} for an illustration of this possibility. 
\begin{figure}[!htb]
    \centering
    \includegraphics[scale=0.4]{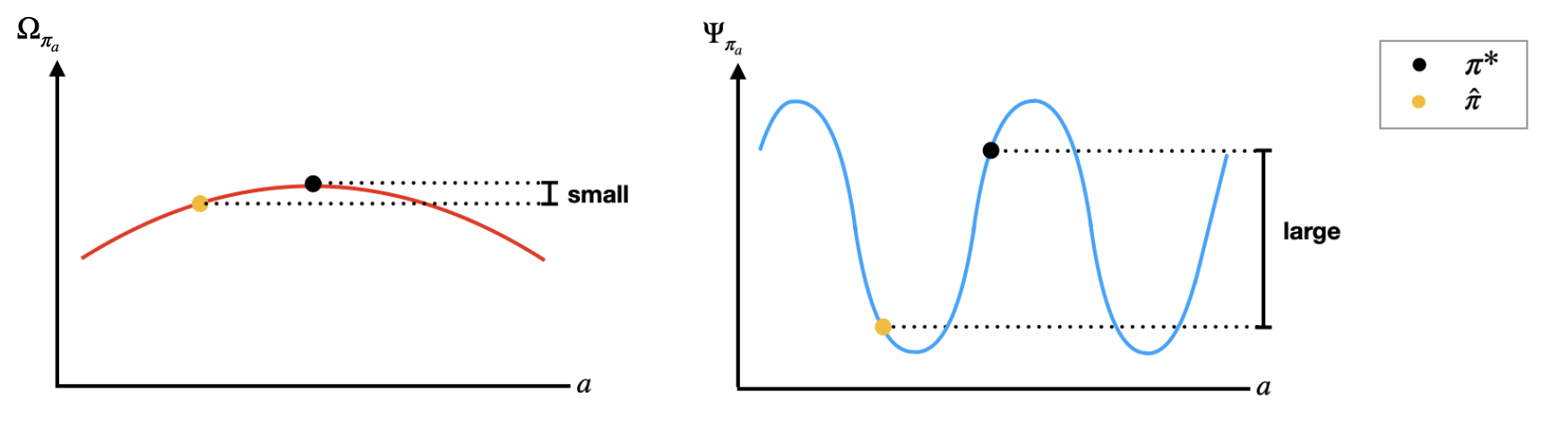}
    \caption{Plot of primary and subsidiary performance metrics for an estimated policy given the threshold policy class $\Pi=\{\mathbf{1}(x\geq a): a\in\mathbb{R}\}$. The estimator $\hat{\pi}$ performs well in the sense that the $\Omega$-regret $\Omega_{\pi^*}(P_0)-\Omega_{\hat{\pi}}(P_0)$ is small, which is to be expected since $\pi^*$ is defined to be an $\Omega$-optimal rule. Nevertheless, in principle the $\Psi$-regret $\Psi_{\pi^*}(P_0)-\Psi_{\hat{\pi}}(P_0)$ could still be large, since the $\Psi$-value function $\pi\mapsto\Psi_\pi(P_0)$ may be markedly different from the $\Omega$-value function. Though a similar phenomenon can occur for unrestricted policy classes, which are our focus in this section, the infinite-dimensional nature of these classes precludes their visualization.}
    \label{fig:phi_vs_psi}
\end{figure}
Therefore, we need a condition to characterize the flatness of the $\Psi$ performance surface relative to that of $\Omega$. 
This flatness can be characterized by studying the absolute CATE ratio $\left|q_{b,0}(X)\right|/\left|s_{b,0}(X)\right|$, where we use the convention that $b/0=+\infty$ for $b>0$ and we recall that $\left|q_{b,0}(X)\right|=0$ with probability zero under \eqref{eq:margin_ZYDisplay}. 
Condition~\ref{cond:margin_ZY} imposes that the absolute CATE ratio can only concentrate vanishingly little mass near zero when $X\sim P_0$. 
This certainly holds in the extreme case where, within each level $x$ of the covariates, the magnitude of the expected effect of the action on the primary outcome, namely $|q_{b,0}(x)|$, is at least as large as the magnitude of its effect on the subsidiary outcome, namely $|s_{b,0}(x)|$. 
It also allows for scenarios where the magnitude $|s_{b,0}(x)|$ is much larger than $|q_{b,0}(x)|$ for certain features $x$ with sufficiently small probability of occurrence. 
However, it can fail to hold when there are some feature levels where the action has no effect on the primary outcome and yet does have one on the subsidiary outcomes; this can occur, for example, if the primary outcome is cancer remission and the subsidiary outcome captures side effects induced by chemotherapy. 
Though Condition~\ref{cond:margin_ZY} may be strong, we were unable to show the validity of the standard one-step estimator without it. Therefore, in the remainder of this section we assume that this condition holds, and we refer the reader to the next section for a method that is valid even when it does not.

In the special case where $Y^\dag=Y^\star$ a.s., the asymptotic normality and efficiency of the one-step estimator have previously been justified by establishing the 
pathwise differentiability of the $\Omega$-performance of an $\Omega$-optimal policy  \cite{van2015targeted}. 
We follow a similar approach here when considering cases where $Y^\dag$ and $Y^\star$ may differ. In particular, we establish the pathwise differentiability of $\Psi^* : P\mapsto \sup_{\pi\in\Pi_P^*}\Psi_\pi(P)$ in what follows. 
When doing this, we will need to impose Condition~\ref{cond:margin_ZY}, along with an additional margin condition that is inspired by ones previously assumed in the policy learning \citep{qian2011performance,Alex16} and classification \citep{audibert2007fast} literatures. 
\begin{condition}[Margin condition for $Y^\star$]\label{cond:margin_Y}
For some $\gamma>\frac{1}{\zeta}$, 
\begin{align}
    P_0\left(0< \left|q_{b, 0}(X)\right| \leq t\right) \lesssim t^{\gamma}\qquad \forall t>0.\label{eqn:margin}
\end{align}
\end{condition}
This condition imposes that the unique $\Omega$-optimal policy can be estimated well via a plug-in estimator \citep{qian2011performance,Alex16}. For some generic $P\in\mathcal{P}$ and $\pi\in\Pi$, define $p_P(a|x):=P(A=a|X=x)$ and $D(\pi, P)(x,a,y^{\dag}) = \frac{\I\{a=\pi(x)\}}{p_P(a|x)}[y^\dag-s(a,x)]+s(\pi(x),x)-\Psi_{\pi}(P)$. We will use the shorthand $p_0:=p_{P_0}$ and $p_n:=p_{\widehat{P}_n}$. The following result characterizes the pathwise differentiability of $\Psi^*(\cdot)$ at $P_0$. 

\begin{lemma}\label{prop:path_diff}
Suppose that $\Psi_\pi$ and $\Omega_\pi$ are covariate-adjusted means for each $\pi\in\Pi$, the policy class $\Pi$ is unrestricted, and conditions~\ref{cond:margin_ZY} and \ref{cond:margin_Y} are satisfied. Then,
$\Psi^*$ is pathwise differentiable at $P_0$ relative to a nonparametric model with canonical gradient $D(\pi^*,P_0)$. 
\end{lemma}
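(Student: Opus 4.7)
The plan is to verify pathwise differentiability by computing the Gateaux derivative $\frac{d}{d\epsilon}\Psi^*(P_\epsilon)\big|_{\epsilon=0}$ along regular parametric submodels $\{P_\epsilon\}$ with bounded score $g\in L^2(P_0)$ spanning the nonparametric tangent space at $P_0$. Condition~\ref{cond:margin_ZY} with $\zeta>2$ implies $P_0(q_{b,0}(X)=0)=0$, so the $\Omega$-optimal policy $\pi^*(x)=\mathbf{1}\{q_{b,0}(x)>0\}$ is $P_0$-a.s.\ unique and $\Psi^*(P_0)=\Psi_{\pi^*}(P_0)$. I decompose
\begin{align*}
\Psi^*(P_\epsilon)-\Psi^*(P_0)=\underbrace{\left[\Psi_{\pi^*}(P_\epsilon)-\Psi_{\pi^*}(P_0)\right]}_{T_1(\epsilon)}+\underbrace{\left[\Psi^*(P_\epsilon)-\Psi_{\pi^*}(P_\epsilon)\right]}_{T_2(\epsilon)}.
\end{align*}
For $T_1(\epsilon)$, the functional $P\mapsto\Psi_\pi(P)$ is a classical covariate-adjusted mean for each fixed $\pi$, pathwise differentiable with canonical gradient $D(\pi,P_0)$, which yields $T_1(\epsilon)=\epsilon\,\E_{P_0}[D(\pi^*,P_0)\,g]+o(\epsilon)$.

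The substance of the argument lies in showing $T_2(\epsilon)=o(\epsilon)$. Let $\pi_\epsilon^*\in\Pi_{P_\epsilon}^*$ attain $\Psi^*(P_\epsilon)$ (existence follows since $\Pi$ is unrestricted: take $\pi_\epsilon^*(x)=\mathbf{1}\{q_b(P_\epsilon)(x)>0\}$ off the zero set of $q_b(P_\epsilon)$ and $\mathbf{1}\{s_b(P_\epsilon)(x)>0\}$ on it). Then
\begin{align*}
T_2(\epsilon)=\int\left[\pi_\epsilon^*(x)-\pi^*(x)\right]s_b(P_\epsilon)(x)\,dP_\epsilon(x).
\end{align*}
Writing $\Delta_\epsilon:=q_b(P_\epsilon)-q_{b,0}$, on $\{\pi_\epsilon^*(X)\neq\pi^*(X)\}$ the signs of $q_b(P_\epsilon)$ and $q_{b,0}$ differ (or the former vanishes), which forces $|q_{b,0}(X)|\leq|\Delta_\epsilon(X)|$. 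For the chosen bounded-score submodel $\|\Delta_\epsilon\|_\infty=O(\epsilon)$, so this disagreement set is contained in $\{|q_{b,0}|\leq C\epsilon\}$ for some $C>0$. Using uniform closeness of $P_\epsilon$ to $P_0$ and of $s_b(P_\epsilon)$ to $s_{b,0}$,
\begin{align*}
|T_2(\epsilon)|\lesssim \E_{P_0}\left[|s_{b,0}(X)|\,\mathbf{1}\{|q_{b,0}(X)|\leq C\epsilon\}\right]\leq C\epsilon\,\E_{P_0}\left[R(X)\,\mathbf{1}\{|q_{b,0}(X)|\leq C\epsilon\}\right],
\end{align*}
where $R(X):=|s_{b,0}(X)|/|q_{b,0}(X)|$ is the absolute CATE ratio. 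Applying H\"older's inequality with any $p\in(1,\zeta)$ and $q=p/(p-1)$, Condition~\ref{cond:margin_ZY} yields $\|R\|_{L^p(P_0)}<\infty$ and Condition~\ref{cond:margin_Y} yields $P_0(0<|q_{b,0}|\leq C\epsilon)\lesssim\epsilon^\gamma$, so $|T_2(\epsilon)|\lesssim\epsilon^{1+\gamma/q}=o(\epsilon)$.

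Combining the two bounds gives $\Psi^*(P_\epsilon)-\Psi^*(P_0)=\epsilon\,\E_{P_0}[D(\pi^*,P_0)\,g]+o(\epsilon)$. Since $D(\pi^*,P_0)$ is mean-zero under $P_0$ and lies in the nonparametric tangent space, it is the canonical gradient, completing the proof. The main obstacle is the $T_2$ bound, which requires simultaneous use of both margin conditions: the tail bound in Condition~\ref{cond:margin_ZY} is needed to control the possibly unbounded ratio $R$ in the H\"older step, while the margin condition in Condition~\ref{cond:margin_Y} is needed to bound the small-margin probability $P_0(0<|q_{b,0}|\leq C\epsilon)$. A secondary delicate point is the careful handling of the zero set $\{q_b(P_\epsilon)=0\}$, on which $\Pi_{P_\epsilon}^*$ may contain multiple policies and the supremum in the definition of $\Psi^*(P_\epsilon)$ becomes essential.
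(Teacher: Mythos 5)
Your proof is correct and follows the same skeleton as the paper's: the same telescoping decomposition into a fixed-policy term $\Psi_{\pi^*}(P_\epsilon)-\Psi_{\pi^*}(P_0)$ (handled by the known pathwise differentiability of $\Psi_\pi$ for fixed $\pi$) plus a policy-switching remainder, and the same observation that the disagreement set $\{\pi^*_\epsilon\neq\pi^*\}$ is contained in $\{|q_{b,0}|\le C|\epsilon|\}$ because $q_{b,\epsilon}=q_{b,0}+\epsilon\bar h$ with $\bar h$ bounded. Where you diverge is the final interpolation. The paper splits $\mathcal{X}$ into the set where the CATE ratio $|s_{b,0}|/|q_{b,0}|$ is below a threshold $t^{-1}$ and its complement, bounds the two pieces separately by $t^{-1}|\epsilon|^{1+\gamma}$ (via Condition~\ref{cond:margin_Y}) and $t^{\zeta}$ (via Condition~\ref{cond:margin_ZY} applied to the complement alone, without intersecting with the small-margin event), and then optimizes over $t$; this is what produces the requirement $\gamma>1/\zeta$. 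You instead apply H\"older's inequality directly to $R\cdot\mathbf{1}\{0<|q_{b,0}|\le C\epsilon\}$ with $R$ the CATE ratio, using Condition~\ref{cond:margin_ZY} only to guarantee $\|R\|_{L^p(P_0)}<\infty$ for $p<\zeta$ and Condition~\ref{cond:margin_Y} to control $P_0(E)^{1/q}$. This exploits both conditions on the \emph{same} event simultaneously and yields $|T_2(\epsilon)|\lesssim|\epsilon|^{1+\gamma(1-1/p)}$, which is $o(|\epsilon|)$ for any $\gamma>0$ --- so your argument actually establishes the lemma under a weaker margin requirement than the paper's $\gamma>1/\zeta$. (Interestingly, the paper uses exactly this kind of H\"older interpolation in its proof of the analogous estimation remainder $R_{2n}$, just not here.) Two smaller points in your favor: you handle the selection from the possibly non-singleton set $\Pi_{P_\epsilon}^*$ explicitly by maximizing $s_b(P_\epsilon)$ on the zero set of $q_b(P_\epsilon)$, whereas the paper silently identifies $\Psi^*(P_\epsilon)$ with the value of the particular selection $\mathbf{1}\{q_{b,\epsilon}>0\}$; and your ``$\lesssim$'' in the $T_2$ bound absorbs an additive $O(|\epsilon|\cdot P_0(E))=O(|\epsilon|^{1+\gamma})$ term from $|s_{b,\epsilon}|\le|s_{b,0}|+H|\epsilon|$ and the density perturbation, which is harmless but worth stating.
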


We use the above result to argue that a one-step corrected estimator is efficient provided its influence function is equal to $D(\Pi^*,P_0)$. Consider some estimate $\hat{P}_n$ of the true distribution $P_0$. The one-step corrected estimator takes the form $\psi_{OS,n}:=\Psi_{\widehat{\pi}}(\hat{P}_n)+P_nD(\widehat{\pi},\hat{P}_n)$. For simplicity, when studying this estimator, we focus on the case where $\widehat{\pi}$ is a plug-in estimator of the $\Omega$-optimal policy, namely $\pi_{\widehat{P}_n}^*$. In principle, the policy estimator could be constructed using some other approach, such as outcome weighted learning \citep{zhao2012estimating}. 
Let $q_{b,n}(x)$ and $s_{b,n}(x)$ be some estimates for the conditional average treatment effects $q_{b,0}(x)$ and $s_{b,0}(x)$ respectively. Also, let $s_{n}(a,x)$ be some estimate for $s_0(a,x)$. Define the $L_r(P)$ norm of a generic function $f: \D\to\R$ as $\|f\|_{r,P}:=[\int_\D |f(t)|^r dP(t)]^{1/r}$. We first present some consistency conditions on these estimates. 

\begin{condition}[Consistent estimator of conditional average treatment effect on the primary outcome]\label{cond:4}
$\left\|q_{b, n}-q_{b, 0}\right\|_{\infty, P_{0}}^{1+\gamma/2}=o_{P_{0}}(n^{-1/2})$.
\end{condition}

\begin{condition}[Consistent estimator of conditional average treatment effect on the subsidiary outcome]\label{cond:5}
$\max_{a\in\{0,1\}} \left\{\left\|\frac{p_0(a \mid \cdot)}{p_{n}(a \mid \cdot)}-1\right\|_{2, P_{0}}\left\|s_{\widehat{P}_n}(a, \cdot)-s_{P_0}(a, \cdot)\right\|_{2, P_{0}}\right\}=o_{P_0}(n^{-1/2})$, where $s_P(a,x):=E_P[Y^\dagger\mid A=a,X=x]$.
\end{condition}
Condition~\ref{cond:5} is similar to Equation 15 in \cite{Alex16}. Discussion of this condition can be found in \cite{Alex16}.
The following theorem states that the one-step estimator is efficient. 

\begin{theorem}\label{prop:asymp_linear_psi}
Under Conditions \ref{cond:margin_ZY}, \ref{cond:margin_Y}, \ref{cond:4}, \ref{cond:5}, and also provided $D(\widehat{\pi},\hat{P}_n)$ falls in a $P_0$-Donsker class with probability tending to 1 and $\|D(\widehat{\pi},\hat{P}_n)-D(\pi^*,P_0)\|_{2,P_0}\overset{p}{\rightarrow} 0$, the one-step estimator $\psi_{OS,n}$ for $\hat{\pi}=\pi_{\hat{P}_n}^*$ is an asymptotically linear estimator of $\Psi^*(P_0)$ with influence function $D(\pi^*,P_0)$, in the sense that
\begin{align*}
\psi_{OS,n} - \Psi^*(P_0) = \frac{1}{n}\sum_{i=1}^n D(\pi^*,P_0)(X_i,A_i,Y_i^\dagger) + o_{P_0}(n^{-1/2}).
\end{align*}
Moreover, $\psi_{OS,n}$ is an asymptotically efficient estimator of $\psi_0$. 
\end{theorem}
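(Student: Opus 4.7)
The plan is to follow the standard von Mises expansion for one-step estimators. Writing $\Psi^*(P_0) = \Psi_{\pi^*}(P_0)$ and using that $P_0 D(\pi^*, P_0) = 0$, I decompose the estimation error as
\begin{align*}
\psi_{OS,n} - \Psi^*(P_0) = (P_n - P_0) D(\pi^*, P_0) + T_n + R_{1,n} + R_{2,n},
\end{align*}
where $T_n := (P_n - P_0)[D(\widehat\pi, \widehat P_n) - D(\pi^*, P_0)]$ is an empirical-process remainder, $R_{1,n} := P_0 D(\widehat\pi, \widehat P_n) + \Psi_{\widehat\pi}(\widehat P_n) - \Psi_{\widehat\pi}(P_0)$ is the classical doubly-robust bias, and $R_{2,n} := \Psi_{\widehat\pi}(P_0) - \Psi_{\pi^*}(P_0)$ captures the novel error from using $\widehat\pi$ in place of $\pi^*$ inside $\Psi_\cdot(P_0)$. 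It suffices to show all three remainders are $o_{P_0}(n^{-1/2})$; efficiency then follows immediately, since Lemma~\ref{prop:path_diff} already identifies $D(\pi^*, P_0)$ as the canonical gradient in the nonparametric model.

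The term $T_n$ is routine: the $P_0$-Donsker hypothesis on $D(\widehat\pi, \widehat P_n)$ (preserved under translation by the fixed element $D(\pi^*, P_0)$), combined with $\|D(\widehat\pi, \widehat P_n) - D(\pi^*, P_0)\|_{2, P_0} \xrightarrow{p} 0$, gives $T_n = o_{P_0}(n^{-1/2})$. For $R_{1,n}$, expanding $D$ and taking conditional expectations given $X$ collapses everything into a single cross-product,
\begin{align*}
R_{1,n} = \E_{P_0}\left[\left(\tfrac{p_0(\widehat\pi(X)\mid X)}{p_n(\widehat\pi(X)\mid X)} - 1\right)\left(s_0(\widehat\pi(X), X) - s_n(\widehat\pi(X), X)\right)\right],
\end{align*}
which is $o_{P_0}(n^{-1/2})$ by Cauchy-Schwarz and Condition~\ref{cond:5}.

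The main obstacle is $R_{2,n}$. Since Condition~\ref{cond:margin_ZY} implies $|q_{b,0}| > 0$ a.s., the $\Omega$-optimal rule is $\pi^* = \mathbf{1}\{q_{b,0} > 0\}$ and $\widehat\pi = \mathbf{1}\{q_{b,n} > 0\}$, so a direct computation gives $R_{2,n} = \E_{P_0}[(\widehat\pi(X) - \pi^*(X)) s_{b,0}(X)]$, and the event $\{\widehat\pi \neq \pi^*\}$ is contained in $\{|q_{b,0}(X)| \leq \epsilon_n\}$ with $\epsilon_n := \|q_{b,n} - q_{b,0}\|_{\infty, P_0}$. The crux is that Condition~\ref{cond:margin_Y} alone controls $|q_{b,0}|$ on this event but leaves $|s_{b,0}|$ uncontrolled; this is precisely what Condition~\ref{cond:margin_ZY} is for, since it bounds the upper tail of the CATE ratio $r(X) := |s_{b,0}(X)|/|q_{b,0}(X)|$. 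Splitting on $\{r \leq M\}$ versus $\{r > M\}$ for a free parameter $M > 0$, Condition~\ref{cond:margin_Y} yields a bound $\lesssim M \epsilon_n^{1+\gamma}$ on the first piece, while integrating the tails from Condition~\ref{cond:margin_ZY} bounds the second piece by $\lesssim \epsilon_n M^{1-\zeta}$. Balancing with $M \asymp \epsilon_n^{-\gamma/\zeta}$ delivers the overall bound $|R_{2,n}| \lesssim \epsilon_n^{\,1 + \gamma(1 - 1/\zeta)}$. Because Condition~\ref{cond:margin_ZY} requires $\zeta > 2$, the exponent strictly exceeds $1 + \gamma/2$, so Condition~\ref{cond:4} is enough to conclude $R_{2,n} = o_{P_0}(n^{-1/2})$.

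Combining the bounds on $T_n$, $R_{1,n}$, and $R_{2,n}$ yields the claimed asymptotic linear expansion with influence function $D(\pi^*, P_0)$. Asymptotic efficiency is then an immediate consequence of Lemma~\ref{prop:path_diff}: any regular asymptotically linear estimator whose influence function equals the canonical gradient attains the nonparametric efficiency bound.
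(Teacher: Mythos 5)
Your proposal is correct, and its skeleton coincides with the paper's: the same four-term decomposition into the linear term $(P_n-P_0)D(\pi^*,P_0)$, the empirical-process term $T_n$ (killed by the Donsker hypothesis plus $L^2(P_0)$-consistency, as in the paper via Lemma 19.24 of van der Vaart), the doubly robust remainder $R_{1,n}$ (reduced to the same cross-product and killed by Cauchy--Schwarz and Condition~\ref{cond:5}, exactly as in the paper's Proposition~\ref{prop:R_1n}), and the policy-substitution remainder $R_{2,n}$, with efficiency read off from Lemma~\ref{prop:path_diff}. Where you genuinely diverge is in the one step that carries the weight of both margin conditions, the bound on $R_{2,n}=\E_{P_0}[(\widehat\pi(X)-\pi^*(X))s_{b,0}(X)]$. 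The paper (Proposition~\ref{prop:R_2n_simple}) slices $\mathcal{X}$ into countably many shells $A_u$ on which the CATE ratio lies in $[C_1u,C_1(u+1))$, performs an Abel summation, and applies H\"older with exponents $(2,2)$ to each shell, obtaining $|R_{2,n}|\lesssim \epsilon_n^{1+\gamma/2}\sum_u u^{-\zeta/2}$ with $\epsilon_n:=\|q_{b,n}-q_{b,0}\|_{\infty,P_0}$, the series converging precisely because $\zeta>2$. You instead make a single split at a free threshold $M$ on the ratio $r=|s_{b,0}|/|q_{b,0}|$, bound the two pieces by $M\epsilon_n^{1+\gamma}$ and $\epsilon_n M^{1-\zeta}$ (the latter by integrating the tail of Condition~\ref{cond:margin_ZY}, which only needs $\zeta>1$), and optimize $M$. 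This is cleaner and in fact marginally sharper: your exponent $1+\gamma(1-1/\zeta)$ strictly exceeds the paper's $1+\gamma/2$ whenever $\zeta>2$, so Condition~\ref{cond:4} covers it with room to spare; the role of $\zeta>2$ in your argument is only to guarantee this domination, whereas in the paper it is also what makes the shell series summable. Both arguments are valid and both deliver $R_{2,n}=o_{P_0}(n^{-1/2})$.
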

The above can be used to construct Wald-type confidence intervals for $\psi_0$ of the form $\psi_{OS,n}\pm z_{1-\alpha/2}\sigma_n/\sqrt{n}$, where $z_{1-\alpha/2}$ is the $1-\alpha/2$ quantile of a standard normal random variable and $\sigma_n^2:= \frac{1}{n}\sum_{i=1}^n D(\widehat{\pi},\widehat{P}_n)(X_i,A_i,Y_i^\dagger)^2$.

The Donsker condition stated in the above theorem can be removed if cross-fitting is used \citep{schick1986asymptotically}. A 2-fold version of this approach first partitions the data in two halves. Then, it uses the first half of the data to learn $\hat{\pi}$ and uses the remaining data to construct an estimator for $\Psi_{\hat{\pi}}(P_0)$. The roles of the two halves are then swapped and the two estimators are subsequently averaged. Multi-fold versions of cross-fitting could also be used.

\section{Inference of a general functional without margin assumption}\label{sec:general_inference}

\subsection{Overview of the methods}\label{sec:overview}
The methods we present in this section are agnostic to whether Condition \ref{cond:margin_ZY} holds and, more generally, whether there are multiple $\Omega$-optimal policies. 
Because the parameter $\Psi^*$ considered in the previous section may not even be well-defined when there are multiple such policies, we instead focus on inferring about the range $[\psi_0^l, \psi_0^u]$ of possible $\Psi$-performances of $\Omega$-optimal policies. 
Unlike those in the previous section, the methods developed here critically rely on the policy class $\Pi$ being restricted --- in particular, being $P_0$-Donsker \citep{van1996weak} --- and this condition cannot be removed even if cross-fitting is employed (see Section~\ref{sec:unifCSVal} for a discussion). Also, in this section, we do not assume our performance criteria are covariate-adjusted means. Rather, they could take some other form, such as that of a covariate-adjusted median. 
In what follows we give an overview of our approach for inferring about $[\psi_0^l, \psi_0^u]$. 

Our proposed method consists of two stages. The first spends $\beta<\alpha$ error probability to construct a confidence set $\hat{\Pi}_\beta$ that contains the set of optimal policies $\Pi^*$ with probability tending to at least $1-\beta$. The second infers about the $\Psi$-performance of each remaining policy in this confidence set, returning a confidence interval for $[\psi_0^\ell,\psi_0^u]$ of the form
\begin{equation}
    \left[\inf_{\pi\in\hat{\Pi}_\beta}\left\{\hat{\psi}_\pi - \frac{\widehat{\kappa}_\pi z_{\alpha,\beta}}{n^{1/2}}\right\}, \sup_{\pi\in\hat{\Pi}_\beta}\left\{\hat{\psi}_\pi + \frac{\widehat{\kappa}_\pi z_{\alpha,\beta}}{n^{1/2}}\right\}\right],\label{eqn:asymp_int_two_stage}
\end{equation}
where $\hat{\psi}_\pi$ is some estimate for $\Psi_\pi(P_0)$, $z_{\alpha,\beta}$ corresponds to $1-(\alpha-\beta)/2$ quantile of normal distribution and $\widehat{\kappa}_\pi^2$ is an estimate of the asymptotic efficiency bound for estimating $\Psi_\pi(P_0)$. 
We provide a union bounding argument that shows that, under conditions, this confidence interval will cover $[\psi_0^\ell,\psi_0^u]$ with asymptotic probability $1-\alpha$.

The first-stage confidence set $\hat{\Pi}_\beta$ is constructed so that policies that perform poorly in terms of the primary performance metric are eliminated. In words, we maintain policies $\pi$ whose uniform upper confidence bound for $\Psi_\pi(P_0)$ is greater than the largest non-uniform lower confidence bound across all policies in the set. Figure~\ref{fig:filtration} shows an example of how the first-stage elimination is performed. More specifically, we define this set $\hat{\Pi}_\beta$ after the first-stage filtration as
\begin{align}
    \hat{\Pi}_\beta:= \left\{\pi\in\Pi : L_n\leq \hat{\omega}_\pi + \frac{\hat{\sigma}_\pi t_{\beta}}{n^{1/2}}\right\},\label{eqn:Pi1-beta}
\end{align}
where $\hat{\omega}_\pi$ is some estimate for $\Omega_\pi(P_0)$, $\widehat{\sigma}_\pi^2$ is an estimator of the asymptotic efficiency bound for estimating $\Omega_\pi(P_0)$, $L_n$ is an asymptotically valid $1-\beta/2$ lower bound for $\sup_{\pi\in\Pi}\Omega_\pi(P_0)$ (e.g., obtained via \citep{Alex16}), and $t_{\beta}$ is selected in such a way that $\{\hat{\omega}_\pi + \hat{\sigma}_\pi t_{\beta}/n^{1/2} : \pi\in\Pi\}$ is an asymptotically valid $1-\beta/2$ uniform upper confidence bound for $\{\Omega_\pi(P_0): \pi\in\Pi\}$, in the sense that $\Omega_\pi(P_0)\le \hat{\omega}_\pi + \hat{\sigma}_\pi t_{\beta}/n^{1/2}$ for all $\pi\in\Pi$ with probability tending to at least $1-\beta/2$ as $n$ goes to infinity. 

It may at first be surprising that, in constructing the confidence interval for $[\psi_0^\ell, \psi_0^u]$, the only place a uniform confidence bound is used is in the upper bound of \eqref{eqn:Pi1-beta}. 
Indeed, when we began studying this problem, the first approach that we considered was the same as that previously described, except with all confidence bounds replaced by uniform ones. 
In particular, $L_n$ was defined as the maximum over $\pi\in\Pi$ of a uniform lower confidence bound for the $\Omega$-value function and the minimal and maximal marginal confidence bounds in \eqref{eqn:asymp_int_two_stage} were also replaced by minimal and maximal uniform confidence bounds. 
However, after analyzing this method, we discovered that less uniformity was needed than we initially expected. 
Indeed, the uniformity in defining $L_n$ can be dropped since a simple union bounding argument shows that $L_n$ only needs to satisfy that it falls below the optimal $\Omega$-value with asymptotic probability at least $1-\beta/2$; while selecting the maximum of a uniform confidence band for the value function does satisfy such a property, developing such a lower bound is now a well-studied problem, and so less conservative approaches have been developed \citep{Alex16}. 
The uniformity on the second stage can be dropped via an intersection-union method argument \citep[Theorem~1 of][]{berger1996bioequivalence}, which we show can be applied since our interest concerns parameters defined as the maxima and minima over a set. 

\begin{figure}[tb]
    \centering
    \includegraphics[scale=0.33]{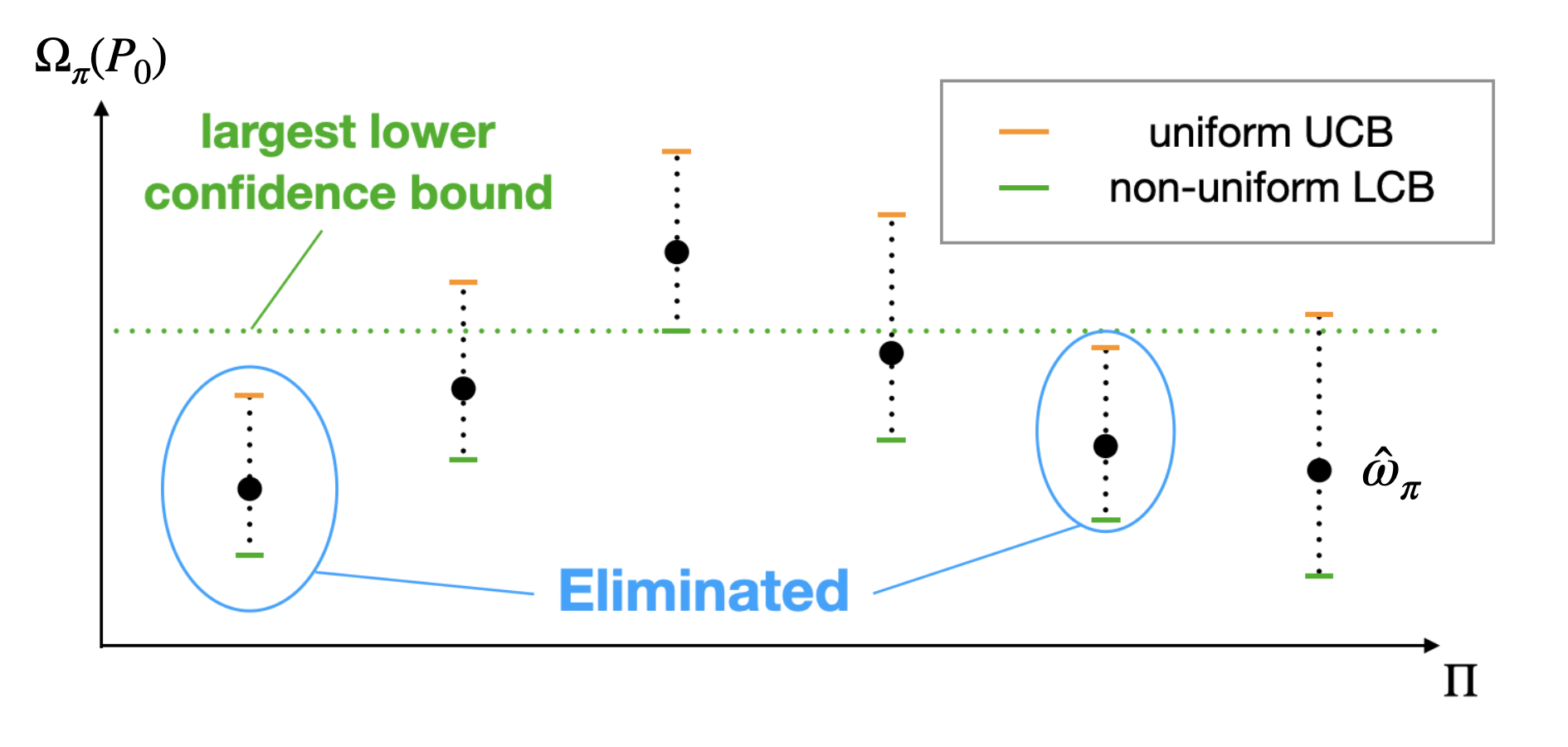}
    \caption{Example of first-stage elimination. Each black dot represents an estimate of $\Omega_\pi(P_0)$ and the horizontal bars denote the confidence bounds. Policies whose uniform upper confidence bound (UCB) is below the largest lower confidence bound (LCB) get eliminated.}
    \label{fig:filtration}
\end{figure}

As mentioned earlier, justifying the above approach relies on a union-bounding argument across the $\beta$ coverage error that could be made by the first-stage confidence interval in \eqref{eqn:Pi1-beta} and the $1-\alpha-\beta$ coverage error that could be made by the second-stage confidence interval in \eqref{eqn:asymp_int_two_stage}.
Relying on this union bound could result in unnecessarily wide confidence intervals, so we also present another two-stage method whose justification does not require a union bound. 
In the first stage of this approach we choose the quantiles $s_{\alpha}^\dagger$, $t_{\alpha}^\dagger$, and $u_{\alpha}^\dagger$ derived as extreme values of the joint distributions of estimators of $(\Omega_\pi(P_0))_{\pi\in\Pi}$ and $(\Psi_\pi(P_0))_{\pi\in\Pi}$ --- see Section~\ref{sec:joint} for details.  
Then we construct $\hat{\Pi}_\beta$ and the asymptotic interval the same ways as in \eqref{eqn:Pi1-beta} and \eqref{eqn:asymp_int_two_stage}, while replacing $t_{\beta}$ and $z_{\alpha,\beta}$ with $t_{\alpha}^\dagger$ and $s_{\alpha}^\dagger$, respectively. Given that $s_{\alpha}^\dagger$, $t_{\alpha}^\dagger$, and $u_{\alpha}^\dagger$ are constructed based on a joint distribution, we refer to this approach as the joint approach. Because of the avoidance of the union bound, the joint approach is expected to provide tighter confidence intervals in scenarios when the primary and subsidiary outcomes are strongly correlated. 

\subsection{A union bounding approach}\label{sec:unifCSVal}

In this subsection, we provide additional details and theoretical results about the union bounding approach. We first need the following condition for an estimator of $\{\Omega_\pi(P_0) : \pi\in\Pi\}$. In what follows, we let $\tilde{D}_\pi$ be the canonical gradient of $\Psi_\pi$ relative to a locally nonparametric model, $\sigma_\pi(P_0):= [P D_\pi(P_0)^2]^{1/2}$, and $\kappa_\pi(P_0):= [P \tilde{D}_\pi(P_0)^2]^{1/2}$. 
\begin{condition}[Uniform asymptotic linearity of estimators of $\Omega$-value and $\Psi$-value functions]\label{cond:asymp_linear_est}
The estimators $\{\hat{\omega}_\pi : \pi\in\Pi\}$ of $\{\Omega_\pi(P_0) : \pi\in\Pi\}$ and $\{\hat{\psi}_\pi : \pi\in\Pi\}$ of $\{\Psi_\pi(P_0) : \pi\in\Pi\}$ satisfy
\begin{align}
\sup_{\pi\in \Pi} \left[\hat{\omega}_\pi - \Omega_\pi(P_0) - P_n D_\pi(P_0)\right]= o_p(n^{-1/2}), \hspace{16pt} \sup_{\pi\in \Pi^*} \left[\hat{\psi}_\pi - \Psi_\pi(P_0) - P_n \tilde{D}_\pi(P_0)\right]= o_p(n^{-1/2}). \label{eq:uniformAL}
\end{align}
\end{condition}
These asymptotic linearity conditions can be established via consistency requirements similar to those in Condition~\ref{cond:5} and a Donsker condition (see Section 2.1 of \citep{luedtke2020performance}). Note that the latter equality in \eqref{eq:uniformAL} only requires uniformity over $\Pi^*$, rather than all of $\Pi$. Estimators satisfying \eqref{eq:uniformAL} can be derived via one-step estimation \citep{pfanzagl1982lecture}, targeted minimum loss-based estimation \citep{van2006targeted}, or double machine learning \citep{chernozhukov2018double}. We now provide some conditions on the $\Psi$-value function, the policy class $\Pi$ and necessary conditions for standard deviations and the primary outcome. 

\begin{condition}[Restricted policy class]\label{cond:restrict_policy_class}
The policy class $\Pi$ satisfies the following: 
\begin{enumerate}[label=(\arabic*)]
    \item $\Pi$ has a bounded uniform entropy integral (Chapter 2.5.1 of \citep{van1996weak}); 
    \item $\Pi$ is closed in $L^2(P_0)$, in the sense that, for all $\pi : \mathcal{X}\rightarrow\{0,1\}$, a $\Pi$-valued sequence $(\pi_k)_{k=1}^\infty$ converges to $\pi$ in $L^2(P_0)$ only if $\pi\in \Pi$;
    \item $\Pi^*$ is non-empty. 
\end{enumerate}
\end{condition}
Examples of such policy class $\Pi$ in $L^2(P_0)$ include classes of binary decision trees with fixed depths while noting that Condition~\ref{cond:restrict_policy_class} applies to more complicated and general policy classes. We then provide some conditions for the standard deviations and the smoothness of the $\Psi$-value function. 
\begin{condition}[Non-vanishing standard deviations and consistent estimators thereof]\label{cond:non_vanish_stdev}
The standard deviations satisfy the following conditions: $\inf_{\pi\in\Pi}\sigma_\pi(P_0)>0$, $\sup_{\pi\in\Pi}\sigma_\pi(P_0)<\infty$, $\inf_{\pi\in\Pi}\kappa_\pi(P_0)>0$, and $\sup_{\pi\in\Pi}\kappa_\pi(P_0)<\infty$. In addition, $\hat{\sigma}_\pi$ and $\hat{\kappa}_\pi$ are uniformly consistent estimators of $\sigma_\pi(P_0)$ and $\kappa_\pi(P_0)$.
\end{condition}
\begin{condition}[Smoothness of performance metric in policy]\label{cond:bounded_phi} 
    The map $\pi\mapsto \Psi_\pi(P_0)$ is continuous and, for all $\pi,\pi'\in\Pi$, $\norm{D_\pi-D_{\pi'}}_{L^2(P_0)}\leq C_2\norm{\pi-\pi'}_{L^2(P_0)}$ for some constant $C_2$.
\end{condition}

When $\Omega$ and $\Psi$ are covariate-adjusted mean functionals as in Section~\ref{sec:inf_under_margin_cond} and the primary and subsidiary outcomes are bounded, Condition~\ref{cond:bounded_phi} is necessarily true. Let $\mathcal{F}:=\{D_\pi(P_0)/\sigma_\pi(P_0) : \pi\in\Pi\}$ and $\tilde{\mathcal{F}}:=\{\tilde{f}_\pi:=\tilde{D}_\pi(P_0)/\kappa_\pi(P_0) : \pi\in\Pi\}$ denote the collections of canonical gradients that are standardized to have unit variance. Conditions~\ref{cond:non_vanish_stdev} and~\ref{cond:bounded_phi} play a crucial role in demonstrating that $\F$ and $\tilde{\F}$ are $P_0$-Donsker, which is required to validate the uniform confidence bands utilized in our union bounding approach. 

We now show that the confidence set $\hat{\Pi}_\beta$ contains the set of $\Omega$-optimal policies $\Pi^*$ with high probability asymptotically. Before presenting this result, we define the threshold $t_{\beta}$ used to define this confidence set in \eqref{eqn:Pi1-beta}. To this end, let $\{\mathbb{G} f: f\in\F\}$ be a mean-zero Gaussian process with a covariance function $(f_1,f_2)\mapsto Pf_1 f_2$. 
Then, $t_{\beta}$ is defined to be the $1-\beta/2$ quantile of $\sup_{f\in\mathcal{F}} \mathbb{G} f$ and Lemma \ref{prop:3.1} in the appendix shows that $\left\{\hat{\omega}_\pi \pm \frac{\hat{\sigma}_\pi t_{\beta}}{n^{1/2}} : \pi\in \Pi\right\}$ is an asymptotically valid uniform $\beta$-level confidence band for $\{\omega_\pi : \pi\in\Pi\}$.
\begin{lemma}[Asymptotic coverage of $\hat{\Pi}_\beta$]\label{lem:Pi1b}
If Conditions~\ref{cond:asymp_linear_est}, \ref{cond:restrict_policy_class}, and \ref{cond:non_vanish_stdev} hold, then $\limsup_n P\{\Pi^*\not\subseteq \hat{\Pi}_\beta\}\le \beta$.
\end{lemma}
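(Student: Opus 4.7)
The plan is to decompose the coverage error budget $\beta$ into two equal halves---one for the validity of $L_n$ as a lower bound on $\sup_{\pi\in\Pi}\Omega_\pi(P_0)$, and one for the uniform upper confidence band appearing in the definition of $\hat{\Pi}_\beta$---and then to apply a union bound. The key observation that makes this work is that every $\pi^*\in\Pi^*$ attains the common value $\Omega_{\pi^*}(P_0)=\sup_{\pi\in\Pi}\Omega_\pi(P_0)$, so a single scalar lower bound on this supremum automatically dominates $L_n$ simultaneously for every element of $\Pi^*$; no uniformity is needed on the lower side, in contrast to the upper side.

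To formalize this, I would introduce the events
\begin{align*}
E_1 &:= \left\{L_n\le \sup_{\pi\in\Pi}\Omega_\pi(P_0)\right\}, \\
E_2 &:= \left\{\Omega_\pi(P_0)\le \hat{\omega}_\pi+\hat{\sigma}_\pi t_\beta/n^{1/2}\text{ for all }\pi\in\Pi\right\}.
\end{align*}
By the stated property of $L_n$ as an asymptotically valid $(1-\beta/2)$-lower bound for $\sup_{\pi\in\Pi}\Omega_\pi(P_0)$, we have $\liminf_n P(E_1)\ge 1-\beta/2$. The analogous statement $\liminf_n P(E_2)\ge 1-\beta/2$ is the content of Lemma~\ref{prop:3.1} in the appendix, which I would invoke directly: it converts the uniform asymptotic linearity of $\hat{\omega}_\pi$ from Condition~\ref{cond:asymp_linear_est}, the $P_0$-Donsker property of $\mathcal{F}$ (obtained from the bounded uniform entropy integral of $\Pi$ in Condition~\ref{cond:restrict_policy_class} combined with the non-vanishing, bounded standard deviations in Condition~\ref{cond:non_vanish_stdev}), the uniform consistency of $\hat{\sigma}_\pi$ supplied by Condition~\ref{cond:non_vanish_stdev}, and the choice of $t_\beta$ as the $1-\beta/2$ quantile of $\sup_{f\in\mathcal{F}}\mathbb{G} f$, into the desired asymptotic coverage statement for the uniform upper band.

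The remaining step is a deterministic containment. On $E_1\cap E_2$, any $\pi^*\in\Pi^*$ satisfies
\begin{align*}
L_n \;\le\; \sup_{\pi\in\Pi}\Omega_\pi(P_0) \;=\; \Omega_{\pi^*}(P_0) \;\le\; \hat{\omega}_{\pi^*}+\hat{\sigma}_{\pi^*}\, t_\beta/n^{1/2},
\end{align*}
so $\pi^*\in\hat{\Pi}_\beta$ by the very definition in \eqref{eqn:Pi1-beta}. Thus $E_1\cap E_2\subseteq\{\Pi^*\subseteq\hat{\Pi}_\beta\}$, and a union bound gives
\begin{align*}
\limsup_n P\{\Pi^*\not\subseteq\hat{\Pi}_\beta\}\;\le\;\limsup_n P(E_1^c)+\limsup_n P(E_2^c)\;\le\;\beta/2+\beta/2\;=\;\beta.
\end{align*}

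The only genuinely nontrivial ingredient is the justification of the uniform upper confidence band for $\{\Omega_\pi(P_0):\pi\in\Pi\}$, and within it the verification that $\mathcal{F}$ is $P_0$-Donsker so that the Gaussian-process quantile $t_\beta$ correctly calibrates the supremum. Since this is precisely the subject of Lemma~\ref{prop:3.1} rather than of Lemma~\ref{lem:Pi1b} itself, the main obstacle in the present proof is essentially pushed to an auxiliary result; once that result is in hand, the argument reduces to the three-line chain of inequalities above followed by a union bound.
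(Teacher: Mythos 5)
Your proposal is correct and follows essentially the same route as the paper's proof: the event you call $E_1\cap E_2$ is exactly the event $\mathcal{A}$ in the paper, the containment $E_1\cap E_2\subseteq\{\Pi^*\subseteq\hat{\Pi}_\beta\}$ is the same deterministic step (the paper phrases it contrapositively via $\hat{\Pi}_\beta^C$), and both arguments conclude with Lemma~\ref{prop:3.1}, the defining property of $L_n$, and a $\beta/2+\beta/2$ union bound. Your direct formulation is, if anything, a slightly cleaner presentation of the identical argument.
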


The interval in \eqref{eqn:asymp_int_two_stage} uses the remaining $\alpha-\beta$ error probability to construct a confidence interval for the random quantity $\widehat{\mathcal{I}}_\beta:=[\inf_{\pi\in\widehat{\Pi}_\beta}\Psi_\pi(P_0), \sup_{\pi\in\widehat{\Pi}_\beta}\Psi_\pi(P_0)]$. On the event that $\Pi^*\subseteq \hat{\Pi}_\beta$, it is true that $\widehat{\mathcal{I}}_\beta\supseteq [\psi_0^\ell,\psi_0^u]$, and so any interval that covers $\widehat{\mathcal{I}}_\beta$ also covers $[\psi_0^\ell,\psi_0^u]$. A union bound then gives our result. Our findings are summarized in Theorem~\ref{thm:PsiPi}. 

\begin{theorem}[Asymptotic coverage of ${\rm CI}_n$] \label{thm:PsiPi}
Under Conditions \ref{cond:asymp_linear_est}, \ref{cond:restrict_policy_class}, \ref{cond:non_vanish_stdev}, and \ref{cond:bounded_phi}, for a fixed $\alpha\in (0,1)$ and any choice of $\beta\in (0,\alpha)$, the confidence interval ${\rm CI}_n$ as defined in \eqref{eqn:asymp_int_two_stage} satisfies $\liminf_{n\to\infty} \P(\{[\psi_0^\ell, \psi_0^u]\subseteq {\rm CI}_n\})\geq 1-\alpha$. 
\end{theorem}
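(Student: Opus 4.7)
The plan is to decompose the failure event $\{[\psi_0^\ell, \psi_0^u] \not\subseteq {\rm CI}_n\}$ into three pieces and union-bound their asymptotic probabilities against $\beta + (\alpha-\beta)/2 + (\alpha-\beta)/2 = \alpha$. Namely, (E1) $\{\Pi^* \not\subseteq \hat{\Pi}_\beta\}$; (E2) $\{\inf_{\pi\in\hat{\Pi}_\beta}[\hat{\psi}_\pi - \hat{\kappa}_\pi z_{\alpha,\beta}/\sqrt{n}] > \psi_0^\ell\}$; and (E3) $\{\sup_{\pi\in\hat{\Pi}_\beta}[\hat{\psi}_\pi + \hat{\kappa}_\pi z_{\alpha,\beta}/\sqrt{n}] < \psi_0^u\}$. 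Lemma~\ref{lem:Pi1b} immediately controls (E1) by $\beta$ asymptotically, so the remaining work is to bound each of (E2) and (E3) by $(\alpha-\beta)/2$.

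The key step for (E2) is the intersection-union trick of \cite{berger1996bioequivalence}: rather than trying to control the infimum over the random set $\hat{\Pi}_\beta$ uniformly, I fix a \emph{deterministic} policy $\pi^{\ell*} \in \Pi^*$ with $\Psi_{\pi^{\ell*}}(P_0) = \psi_0^\ell$, whose existence is guaranteed by continuity of $\pi\mapsto\Psi_\pi(P_0)$ (Condition~\ref{cond:bounded_phi}) together with closedness of $\Pi^*$ inherited from Condition~\ref{cond:restrict_policy_class}(2); if the infimum is not attained, I replace $\pi^{\ell*}$ with near-extremizers $\pi^{\ell*}_m$ satisfying $\Psi_{\pi^{\ell*}_m}(P_0) \le \psi_0^\ell + 1/m$ and send $m \to \infty$ at the end. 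On the complement of (E1), $\pi^{\ell*}\in\hat{\Pi}_\beta$, so
\begin{align*}
    \inf_{\pi\in\hat{\Pi}_\beta}\left\{\hat{\psi}_\pi - \frac{\hat{\kappa}_\pi z_{\alpha,\beta}}{\sqrt{n}}\right\} \le \hat{\psi}_{\pi^{\ell*}} - \frac{\hat{\kappa}_{\pi^{\ell*}} z_{\alpha,\beta}}{\sqrt{n}},
\end{align*}
which reduces (E2) to the pointwise one-sided failure $\{\hat{\psi}_{\pi^{\ell*}} - \hat{\kappa}_{\pi^{\ell*}} z_{\alpha,\beta}/\sqrt{n} > \Psi_{\pi^{\ell*}}(P_0)\}$. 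A symmetric argument at a deterministic $\pi^{u*}\in\Pi^*$ attaining $\psi_0^u$ reduces (E3) to the analogous upper-sided pointwise failure.

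The final step bounds these pointwise failure probabilities. Condition~\ref{cond:asymp_linear_est} supplies uniform asymptotic linearity of $\hat{\psi}_\pi$ over $\Pi^*$, which implies in particular pointwise asymptotic linearity at the fixed policies $\pi^{\ell*}, \pi^{u*}\in\Pi^*$. Combined with the uniform consistency of $\hat{\kappa}_\pi$ and the non-degeneracy $\kappa_\pi(P_0)>0$ from Condition~\ref{cond:non_vanish_stdev}, Slutsky's theorem yields asymptotic standard normal limits for the two studentized statistics $\sqrt{n}(\hat{\psi}_{\pi^{\ell*}}-\psi_0^\ell)/\hat{\kappa}_{\pi^{\ell*}}$ and $\sqrt{n}(\hat{\psi}_{\pi^{u*}}-\psi_0^u)/\hat{\kappa}_{\pi^{u*}}$. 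Since $z_{\alpha,\beta}$ is the $1-(\alpha-\beta)/2$ standard normal quantile, each corresponding failure probability converges to $(\alpha-\beta)/2$, completing the union bound. The main obstacle is conceptual rather than technical: one has to recognize that, thanks to the first-stage containment $\Pi^*\subseteq\hat{\Pi}_\beta$, the correct object to pointwise-test against is a deterministic extremizer of $\Psi$ over $\Pi^*$, not a data-dependent one over $\hat{\Pi}_\beta$. Once this reduction is in place, no uniform CLT for $\hat{\psi}_\pi$ over all of $\hat{\Pi}_\beta$ is needed, which is exactly why Condition~\ref{cond:asymp_linear_est} only assumes uniformity on $\Pi^*$.
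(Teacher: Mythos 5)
Your proposal is correct and follows essentially the same route as the paper: the same three-way decomposition into the first-stage failure event and the two one-sided second-stage failures, the same reduction of the random infimum/supremum over $\hat{\Pi}_\beta$ to a pointwise Wald test at a deterministic $\Psi$-extremizer of $\Pi^*$, and the same Slutsky-based endgame giving $(\alpha-\beta)/2$ for each side. One small caveat: attainment of the extremum requires compactness of $\Pi^*$ (closedness plus total boundedness, the latter inherited from the Donsker/uniform-entropy property of $\Pi$; see Lemma~\ref{lem:Pi_compact}), not closedness alone, and your near-extremizer fallback would not work as stated (for fixed $m$ the slack $1/m$ eventually dominates the $n^{-1/2}$ interval width, so the one-sided failure probability tends to one rather than $(\alpha-\beta)/2$) --- but under the stated conditions the extremum is attained, so the fallback is never needed.
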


Also, as indicated in \eqref{eqn:asymp_int_two_stage}, the width of ${\rm CI}_n$ is determined by a quantile of a standard normal random variable --- for example, when $\alpha=0.06$ and $\beta=0.01$, $z_{\alpha,\beta}\approx 1.96$. At first this may seem surprising, given that developing a uniform confidence band for $\{\Psi_{\pi}: \pi\in\Pi^*\}$ would require using a strictly larger scaling the standard error of $\widehat{\psi}_\pi$. 
However, our proof of Theorem~\ref{thm:PsiPi} shows that using this larger scaling is not necessary for the sake of developing a confidence interval for $[\psi_0^\ell, \psi_0^u]$. The key to this argument involves showing that, under Condition~\ref{cond:restrict_policy_class}, there exist $\pi^\ell$ and $\pi^u$ in $\Pi^*$ that attain the minimum and maximum $\Psi$-values, respectively. 
The existence of $\pi^\ell$ shows that the event where the lower bound of ${\rm CI}_n$ fails to cover $\psi_0^\ell:=\inf_{\pi\in \Pi^*}\Psi_\pi(P_0)$, intersected with $\Pi^*\subseteq \hat{\Pi}_\beta$, satisfies
\begin{align*}
    &\left\{\inf_{\pi\in \Pi^*} \Psi_\pi(P_0) <\inf_{\pi\in\hat{\Pi}_\beta}\left[\hat{\psi}_\pi - \hat{\kappa}_\pi z_{\alpha,\beta}/n^{1/2}\right], \Pi^*\subseteq \hat{\Pi}_\beta\right\}\subseteq \left\{\inf_{\pi\in \Pi^*} \Psi_\pi(P_0) <\inf_{\pi\in\Pi^*}\left[\hat{\psi}_\pi - \hat{\kappa}_\pi z_{\alpha,\beta}/n^{1/2}\right]\right\}\\
    &= \left\{\psi_{\pi^\ell} <\inf_{\pi\in\Pi^*}\left[\hat{\psi}_\pi - \hat{\kappa}_\pi z_{\alpha,\beta}/n^{1/2}\right]\right\} \subseteq \left\{\psi_{\pi^\ell} <\hat{\psi}_{\pi^\ell} - \hat{\kappa}_{\pi^\ell} z_{\alpha,\beta}/n^{1/2}\right\}.
\end{align*}
The event on the right corresponds to the case where a marginal $1-(\alpha-\beta)/2$-level lower Wald-type confidence interval fails to cover $\psi_{\pi^\ell}$, and so occurs with asymptotic probability $(\alpha-\beta)/2$ under reasonable conditions. 
In our proof of Theorem~\ref{thm:PsiPi}, we establish the result using a union bounding argument that combines this with a similar guarantee for the upper bound of $\mathrm{CI}_n$ and the fact that $\Pi^*\not\subseteq \hat{\Pi}_\beta$ happens with asymptotic probability at most $\beta$. 

Under additional conditions, our confidence interval for $[\psi_0^\ell, \psi_0^u]$ not only ensures asymptotically valid coverage but also attains an optimal $n^{-1/2}$ convergence rate. In this part, we restrict the performance metrics to covariate-adjusted means and propose a boundedness condition on the primary and subsidiary CATE functions.
\begin{condition}[Boundedness condition]\label{cond:boundedness}
There exists some $C_3<\infty$ such that for any $x\in\X$, we have $|s_{b,0}(x)|\leq C_3|q_{b,0}(x)|$. 
\end{condition}

In most ways, Condition~\ref{cond:boundedness} is relatively stronger than Condition~\ref{cond:margin_ZY}. Indeed, the limit of Condition~\ref{cond:margin_ZY} as $\zeta\rightarrow\infty$ corresponds to the condition that the subsidiary CATE is strictly less than a constant multiple of the primary CATE. Since Condition~\ref{cond:margin_ZY} allows for any $\zeta>2$, it puts a much weaker constraint on how the subsidiary outcome behaves for nearly optimal policies. There is one sense, however, in which Condition~\ref{cond:boundedness} is weaker than Condition~\ref{cond:margin_ZY}: it does not generally imply that the $\Omega$-optimal policy is necessarily unique. This is true because it allows for equality between subsidiary and primary CATEs, and so both could be zero on some set of positive probability. Though the optimal policy need not be unique when Condition~\ref{cond:boundedness} holds, it must still be true that all $\Omega$-optimal policies yield the same $\Psi$-value, and so in the following lemma we shall let $\psi_0=\psi_0^\ell=\psi_0^u$.

\begin{lemma}[$n^{-1/2}$ convergence rate of $\operatorname{CI}_n$ under conditions]\label{lem:conv_rate_CI} 
Assume that the performance metrics are covariate-adjusted means as in Section~\ref{sec:inf_under_margin_cond}, the unrestricted $\Omega$-optimal policy over all possible maps from $\X$ to $\{0,1\}$ is in $\Pi$, and $L_n=\sup_{\pi\in\Pi} \left[\hat{\omega}_\pi-\frac{\hat{\sigma}_\pi t_{\beta}}{n^{1/2}}\right]$ in~\eqref{eqn:Pi1-beta}. Then, under Conditions~\ref{cond:asymp_linear_est}, \ref{cond:restrict_policy_class}, \ref{cond:non_vanish_stdev}, and \ref{cond:boundedness}, with probability at least $1-2\beta$ asymptotically, the width of the confidence interval for $\psi_0$ is $O_p(n^{-1/2})$.
\end{lemma}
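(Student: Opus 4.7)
The plan is to show that every policy $\pi\in\hat{\Pi}_\beta$ has $\Omega$-regret of order $n^{-1/2}$ uniformly, to use Condition~\ref{cond:boundedness} to transfer this to a matching $\Psi$-regret bound, and then to combine with uniform asymptotic linearity of $\{\hat{\psi}_\pi\}$ to bound the width of $\mathrm{CI}_n$. By assumption the unrestricted $\Omega$-optimal policy $\pi^*=\mathbf{1}\{q_{b,0}\ge 0\}$ lies in $\Pi$ and hence in $\Pi^*$; it serves as the anchor for all uniform comparisons. Since $\pi^*\in\Pi$, the definition of $L_n$ gives deterministically that $L_n\ge \hat{\omega}_{\pi^*}-\hat{\sigma}_{\pi^*}t_\beta/n^{1/2}$, while the defining inequality of $\hat{\Pi}_\beta$ yields $\hat{\omega}_\pi+\hat{\sigma}_\pi t_\beta/n^{1/2}\ge L_n$ for any $\pi\in\hat{\Pi}_\beta$. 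Chaining these gives $\hat{\omega}_{\pi^*}-\hat{\omega}_\pi\le (\hat{\sigma}_\pi+\hat{\sigma}_{\pi^*}) t_\beta/n^{1/2}$; on the event that the uniform $\beta$-level $\Omega$-confidence band is valid (asymptotic probability at least $1-\beta$ by Lemma~\ref{prop:3.1}), two-sided coverage then yields $\Omega_{\pi^*}(P_0)-\Omega_\pi(P_0)\le 2(\hat{\sigma}_\pi+\hat{\sigma}_{\pi^*}) t_\beta/n^{1/2}=O_p(n^{-1/2})$ uniformly in $\pi\in\hat{\Pi}_\beta$ after invoking Condition~\ref{cond:non_vanish_stdev}.

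Next I would convert the $\Omega$-regret bound to a $\Psi$-regret bound. Because $\Omega$ and $\Psi$ are covariate-adjusted means, the two regrets admit the representations $\Omega_{\pi^*}(P_0)-\Omega_\pi(P_0)=\E_{P_0}[|q_{b,0}(X)|\,\mathbf{1}\{\pi(X)\ne\pi^*(X)\}]$ and $|\Psi_{\pi^*}(P_0)-\Psi_\pi(P_0)|\le \E_{P_0}[|s_{b,0}(X)|\,\mathbf{1}\{\pi(X)\ne\pi^*(X)\}]$. Condition~\ref{cond:boundedness} then gives the pointwise bound $|\Psi_{\pi^*}(P_0)-\Psi_\pi(P_0)|\le C_3\,[\Omega_{\pi^*}(P_0)-\Omega_\pi(P_0)]$, so the previous paragraph implies $\sup_{\pi\in\hat{\Pi}_\beta}|\Psi_\pi(P_0)-\psi_0|=O_p(n^{-1/2})$, where I use that all $\Omega$-optimal policies share the common $\Psi$-value $\psi_0$ under Condition~\ref{cond:boundedness}.

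For the final step, the width of $\mathrm{CI}_n$ is at most $\sup_{\pi\in\hat{\Pi}_\beta}\hat{\psi}_\pi-\inf_{\pi\in\hat{\Pi}_\beta}\hat{\psi}_\pi+2z_{\alpha,\beta}\sup_{\pi\in\hat{\Pi}_\beta}\hat{\kappa}_\pi/n^{1/2}$. The second term is $O_p(n^{-1/2})$ by Condition~\ref{cond:non_vanish_stdev}. For the first, I would write $\hat{\psi}_\pi-\hat{\psi}_{\pi^*}=[\Psi_\pi(P_0)-\Psi_{\pi^*}(P_0)]+(P_n-P_0)[\tilde{D}_\pi(P_0)-\tilde{D}_{\pi^*}(P_0)]+o_p(n^{-1/2})$ uniformly in $\pi$, and invoke the $P_0$-Donsker property of $\tilde{\mathcal{F}}$ implied by Conditions~\ref{cond:restrict_policy_class} and~\ref{cond:non_vanish_stdev} to bound the empirical-process term by $O_p(n^{-1/2})$ uniformly. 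The previous paragraph controls the first bracket. A union bound between the uniform $\Omega$-coverage event and the event that this empirical process is bounded by a constant times $n^{-1/2}$ --- each of asymptotic probability at least $1-\beta$ --- yields the claimed $1-2\beta$ guarantee.

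The main obstacle is supplying the uniform asymptotic linearity invoked in the last step: Condition~\ref{cond:asymp_linear_est} states such linearity only over $\Pi^*$, not over the data-dependent set $\hat{\Pi}_\beta$. The remedy is either to strengthen Condition~\ref{cond:asymp_linear_est} to hold uniformly over all of $\Pi$ --- a natural extension given the Donsker structure already present --- or to verify the corresponding second-order negligibility in $\pi\in\Pi$ directly. For the covariate-adjusted mean setting of this lemma, the latter route follows from the bounded uniform entropy integral of $\Pi$ together with the boundedness of the CATE functions implied by Condition~\ref{cond:boundedness}, so the extension should be routine rather than requiring new machinery.
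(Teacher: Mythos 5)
Your proposal is correct and follows essentially the same route as the paper: the paper's Lemma~\ref{lem:PiHat1_b} is exactly your chaining argument showing that every $\pi\in\hat{\Pi}_\beta$ has $O_p(n^{-1/2})$ $\Omega$-regret, and its auxiliary lemma performs the identical disagreement-set computation, writing $\omega_{\pi^*}-\omega_\pi=\int_{B_{1,0}}|q_{b,0}|\,dP_0+\int_{B_{0,1}}|q_{b,0}|\,dP_0$ and invoking Condition~\ref{cond:boundedness} to transfer this to the $\Psi$-regret. The gap you flag at the end is real but is present in the paper's own proof as well --- the paper bounds $\sup_{\pi\in\Pi}[\hat{\psi}_\pi-\psi_\pi]$ by citing Condition~\ref{cond:asymp_linear_est}, even though that condition only asserts uniform asymptotic linearity of $\hat{\psi}_\pi$ over $\Pi^*$; your proposed remedy (uniformity over all of $\Pi$, which the Donsker structure already in place naturally supplies) is what is implicitly being used there.
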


\subsection{A joint approach}\label{sec:joint}
We now formally describe our joint approach. 
Consider the mean-zero Gaussian process $\{\mathbb{G}f : f\in\mathcal{F}\cup\tilde{\mathcal{F}}\}$ with covariance function $(f_1,f_2)\mapsto Pf_1f_2$. 
Our joint approach is the same as the two-stage procedure from Section \ref{sec:unifCSVal}, except that we require a particular choice of $L_n$ and use cutoffs $(s_{\alpha}^\dagger, t_{\alpha}^\dagger,u_{\alpha}^\dagger)$ satisfying
\begin{equation}
    \inf_{\pi\in\Pi} \P\left\{\inf_{f\in\mathcal{F}}\mathbb{G}f\ge -t_{\alpha}^\dagger,\sup_{f\in\mathcal{F}}\mathbb{G}f\le s_{\alpha}^\dagger,\mathbb{G}\tilde{f}_\pi\ge -u_{\alpha}^\dagger, \mathbb{G}\tilde{f}_\pi\le u_{\alpha}^\dagger\right\}\ge 1-\alpha.\label{eqn:joint_cutoff}
\end{equation}
More specifically, we define the set $\hat{\Pi}^\dagger$ after the first-stage filtration as
\begin{align}
    \widehat{\Pi}^\dagger:= \left\{\pi\in\Pi : \sup_{\pi\in\Pi} \left[\hat{\omega}_\pi-\frac{\hat{\sigma}_\pi s_{\alpha}^\dagger}{n^{1/2}}\right]\leq \hat{\omega}_\pi + \frac{\hat{\sigma}_\pi t_{\alpha}^\dagger}{n^{1/2}}\right\}.\label{eqn:Pihat}
\end{align}
Here we choose $L_n$ to be the uppermost point of a uniform lower confidence band for $\{\Omega_{\pi}(P_0):\pi\in\Pi\}$ with level $\beta^\dagger:=\P\{\sup_{\pi\in\Pi}\mathbb{G}f_\pi > s_{1-\alpha}^\dagger\}<\alpha$. Note that in the union bounding approach, $\beta^\dagger=\beta$, while here $\beta^\dagger$ is implicitly defined through the joint cutoff \eqref{eqn:joint_cutoff}. 
The resulting confidence interval is stated in Theorem~\ref{thm:joint_CI}. 

\begin{theorem}\label{thm:joint_CI}
Under Conditions \ref{cond:asymp_linear_est}, \ref{cond:restrict_policy_class}, \ref{cond:non_vanish_stdev}, and \ref{cond:bounded_phi}, assuming the cutoffs $(s_{\alpha}^\dagger,t_{\alpha}^\dagger,u_{\alpha}^\dagger)$ satisfy \eqref{eqn:joint_cutoff}, it holds that $\liminf_{n\to\infty} \P(\{[\psi_0^\ell, \psi_0^u]\subseteq {\rm CI}_n^\dagger\})\geq 1-\alpha$, where 
$${\rm CI}_n^\dagger:=\left[\inf_{\pi\in\hat{\Pi}^\dagger}\left\{\hat{\psi}_\pi-\frac{\hat{\kappa}_\pi u_{\alpha}^\dagger}{n^{1/2}}\right\},\sup_{\pi\in\hat{\Pi}^\dagger}\left\{\hat{\psi}_\pi+\frac{\hat{\kappa}_\pi u_{\alpha}^\dagger}{n^{1/2}}\right\}\right].$$ 
\end{theorem}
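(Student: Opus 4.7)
\emph{Strategy.} The plan is to mimic the proof of Theorem~\ref{thm:PsiPi}, replacing the union bound across first- and second-stage coverage errors with a single joint bound supplied by \eqref{eqn:joint_cutoff}. Condition~\ref{cond:asymp_linear_est} allows me to translate each coverage-failure mode into a tail event for the empirical process $\mathbb{G}_n:=n^{1/2}(P_n-P_0)$ on $\mathcal{F}\cup\tilde{\mathcal{F}}$, and Conditions~\ref{cond:restrict_policy_class}, \ref{cond:non_vanish_stdev}, and \ref{cond:bounded_phi} make this class $P_0$-Donsker so that $\mathbb{G}_n\rightsquigarrow\mathbb{G}$ in $\ell^\infty(\mathcal{F}\cup\tilde{\mathcal{F}})$.

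\emph{Decomposition and the role of $\pi^\ell,\pi^u$.} I partition $\{[\psi_0^\ell,\psi_0^u]\not\subseteq{\rm CI}_n^\dagger\}$ into the first-stage event $\{\Pi^*\not\subseteq\widehat{\Pi}^\dagger\}$ and, on its complement, the two second-stage endpoint-failure events. The $L^2(P_0)$-closedness of $\Pi$ from Condition~\ref{cond:restrict_policy_class} together with the continuity of $\pi\mapsto\Psi_\pi(P_0)$ from Condition~\ref{cond:bounded_phi} provides extremal policies $\pi^\ell,\pi^u\in\Pi^*$ with $\Psi_{\pi^\ell}(P_0)=\psi_0^\ell$ and $\Psi_{\pi^u}(P_0)=\psi_0^u$, exactly as in the argument behind Theorem~\ref{thm:PsiPi}. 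On $\{\Pi^*\subseteq\widehat{\Pi}^\dagger\}$, bounding the infimum defining the lower endpoint of ${\rm CI}_n^\dagger$ by its value at $\pi^\ell$ and applying uniform asymptotic linearity of $\hat\psi_\pi$ over $\Pi^*$ reduces the lower-endpoint miscoverage to $\mathbb{G}_n\tilde f_{\pi^\ell}>u_\alpha^\dagger+o_p(1)$; symmetrically, the upper-endpoint miscoverage reduces to $\mathbb{G}_n\tilde f_{\pi^u}<-u_\alpha^\dagger+o_p(1)$.

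\emph{First-stage reduction.} For $\{\Pi^*\not\subseteq\widehat{\Pi}^\dagger\}$, the prescribed $L_n=\sup_{\pi\in\Pi}[\hat\omega_\pi-\hat\sigma_\pi s_\alpha^\dagger/n^{1/2}]$, the uniform asymptotic linearity of $\hat\omega_\pi$, and the identity $\Omega_{\pi^*}(P_0)=\sup_\pi\Omega_\pi(P_0)$ together imply: on $\{\sup_{f\in\mathcal{F}}\mathbb{G}_n f\le s_\alpha^\dagger\}\cap\{\inf_{f\in\mathcal{F}}\mathbb{G}_n f\ge -t_\alpha^\dagger\}$, one has $\sup_{\pi}[\mathbb{G}_n D_\pi-\hat\sigma_\pi s_\alpha^\dagger]\le 0$ and $\inf_{\pi^*\in\Pi^*}[\mathbb{G}_n D_{\pi^*}+\hat\sigma_{\pi^*}t_\alpha^\dagger]\ge 0$, so after dropping the nonpositive term $n^{1/2}(\Omega_\pi-\Omega^*)$ in the rescaled defining inequality of $\widehat{\Pi}^\dagger$, the inclusion $\Pi^*\subseteq\widehat{\Pi}^\dagger$ holds up to $o_p(1)$ slippage. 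Consequently, $\{\Pi^*\not\subseteq\widehat{\Pi}^\dagger\}$ is asymptotically contained in $\{\sup_f\mathbb{G}_n f>s_\alpha^\dagger\}\cup\{\inf_f\mathbb{G}_n f<-t_\alpha^\dagger\}$, paralleling Lemma~\ref{lem:Pi1b} with the joint pair $(s_\alpha^\dagger,t_\alpha^\dagger)$ replacing the separate cutoff $t_\beta$.

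\emph{Closing argument and main obstacle.} Combining the three reductions, the overall failure event is asymptotically contained in
$$A:=\bigl\{\sup_{f\in\mathcal{F}}\mathbb{G}_n f>s_\alpha^\dagger\bigr\}\cup\bigl\{\inf_{f\in\mathcal{F}}\mathbb{G}_n f<-t_\alpha^\dagger\bigr\}\cup\bigl\{\mathbb{G}_n\tilde f_{\pi^\ell}>u_\alpha^\dagger\bigr\}\cup\bigl\{\mathbb{G}_n\tilde f_{\pi^u}<-u_\alpha^\dagger\bigr\}.$$
Donsker convergence on $\mathcal{F}\cup\tilde{\mathcal{F}}$ together with continuity of the sup/inf functionals yields $\limsup_n\P(A)\le\P(A^\infty)$, where $A^\infty$ is the analogous event with $\mathbb{G}_n$ replaced by $\mathbb{G}$, and it then suffices to bound $\P(A^\infty)\le\alpha$ via \eqref{eqn:joint_cutoff}. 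The main obstacle is precisely this last step: as stated, \eqref{eqn:joint_cutoff} supplies joint control over $(\sup_f\mathbb{G}f,\inf_f\mathbb{G}f,\mathbb{G}\tilde f_\pi)$ for each individual $\pi$, whereas $A^\infty$ simultaneously involves two potentially distinct policies $\pi^\ell\ne\pi^u$. The cleanest resolution is to interpret \eqref{eqn:joint_cutoff} uniformly in $\pi$—equivalently, to choose $u_\alpha^\dagger$ so that $\sup_{\tilde f\in\tilde{\mathcal{F}}}|\mathbb{G}\tilde f|\le u_\alpha^\dagger$ holds jointly with the two $\mathcal{F}$-based events with probability at least $1-\alpha$, which is feasible because $\tilde{\mathcal{F}}$ is $P_0$-Donsker. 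Under this reading the complement of the cutoff event contains $A^\infty$, so $\P(A^\infty)\le\alpha$ and the theorem follows.
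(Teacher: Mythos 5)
Your skeleton matches the paper's: you split the failure event into a first-stage event and two endpoint events, use $L^2(P_0)$-closedness of $\Pi$, compactness of $\Pi^*$ (Donsker plus closedness), and continuity of $\pi\mapsto\Psi_\pi(P_0)$ to produce extremal policies $\pi^\ell,\pi^u\in\Pi^*$, reduce the first-stage event to $\{\sup_{f\in\mathcal{F}}\mathbb{G}_nf>s_\alpha^\dagger\}\cup\{\inf_{f\in\mathcal{F}}\mathbb{G}_nf<-t_\alpha^\dagger\}$ exactly as in the proof of Lemma~\ref{lem:Pi1b} adapted to $\hat{\Pi}^\dagger$, and reduce each endpoint failure to a tail event for $\mathbb{G}_n\tilde f_{\pi^\ell}$ or $\mathbb{G}_n\tilde f_{\pi^u}$. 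All of that is consistent with the paper.

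The problem is the closing step, and you have in fact diagnosed it yourself: your limiting event $A^\infty$ involves the two distinct policies $\pi^\ell$ and $\pi^u$ simultaneously, whereas \eqref{eqn:joint_cutoff} only controls, for each \emph{single} $\pi$, the joint event in $\bigl(\inf_{f\in\mathcal{F}}\mathbb{G}f,\sup_{f\in\mathcal{F}}\mathbb{G}f,\mathbb{G}\tilde f_\pi\bigr)$. Your proposed resolution --- reinterpreting \eqref{eqn:joint_cutoff} so that $u_\alpha^\dagger$ bounds $\sup_{\tilde f\in\tilde{\mathcal{F}}}|\mathbb{G}\tilde f|$ uniformly --- is not a proof of the theorem as stated: it replaces the hypothesis with a strictly stronger one, forcing $u_\alpha^\dagger$ to be a \emph{uniform} band cutoff over $\tilde{\mathcal{F}}$ rather than a marginal one. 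This defeats the design of the method (the paper stresses that the second stage deliberately uses marginal-width bounds, justified by an intersection--union argument) and would forfeit the claimed tightness advantage of the joint approach over the union-bounding one.

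The paper avoids the two-policy obstacle by never forming your single event $A$. Instead it proves the two one-sided statements separately: it shows $\liminf_n\P\bigl(\sup_{\pi\in\Pi^*}\psi_\pi\le\sup_{\pi\in\hat{\Pi}^\dagger}[\hat\psi_\pi+\hat\kappa_\pi u_\alpha^\dagger/n^{1/2}]\bigr)\ge 1-\alpha/2$ by containing the good event in $\{\widetilde B_{n,\pi^u}>-u_\alpha^\dagger\}\cap\{-t_\alpha^\dagger\le\inf_\pi B_{n,\pi}\}\cap\{\sup_\pi B_{n,\pi}\le s_\alpha^\dagger\}$ --- a three-way event involving only the one policy $\pi^u$, whose unknown identity is absorbed by the infimum over $\pi$ in the cutoff condition --- and symmetrically for the lower endpoint with $\pi^\ell$; the two one-sided bounds are then combined by a union bound, with the first-stage event simply counted in both. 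To repair your argument along the lines of the theorem as stated, you should adopt this split rather than strengthen \eqref{eqn:joint_cutoff}. (A smaller technical point: your portmanteau step should be applied to closed events, i.e.\ with non-strict inequalities, to get the $\limsup$ direction; and note that extracting the $1-\alpha/2$ three-way bounds from the $1-\alpha$ four-way condition \eqref{eqn:joint_cutoff} is itself a step the paper asserts rather than derives, so if you follow the paper's route you should either impose the three-way conditions directly or justify the halving.)
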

There are many possible choices of $(s_{\alpha}^\dagger,t_{\alpha}^\dagger,u_{\alpha}^\dagger)$ that satisfy \eqref{eqn:joint_cutoff}. To select among these, we could choose the triple $(s_{\alpha}^\dagger,t_{\alpha}^\dagger,u_{\alpha}^\dagger)$ that provides the tightest confidence interval from this collection, resulting in what we refer to as an optimized joint method. This optimized $(s_{\alpha}^\dagger,t_{\alpha}^\dagger,u_{\alpha}^\dagger)$ is justified since, for any choice of $(s_{\alpha}^\dagger,t_{\alpha}^\dagger,u_{\alpha}^\dagger)$ satisfying \eqref{eqn:joint_cutoff}, the confidence interval $\rm{CI}_n^\dagger$ has valid coverage. For any $\beta$, this optimized joint method yields a provably tighter confidence interval than the union bounding method that uses the same choice of $L_n$ as in the left-hand side of \eqref{eqn:Pihat}. However, it is possible that the joint approach could potentially result in a wider confidence band in the first stage with the use of an alternative lower confidence bound for the $\Omega$-optimal value, such as the one introduced in \cite{Alex16}. 
In practice, the optimized choice of $(s_{\alpha}^\dagger,t_{\alpha}^\dagger,u_{\alpha}^\dagger)$ is unknown, but it can be approximated via a multiplier bootstrap --- see Appendix~\ref{sec:pseudo_mb} for details. Though our theorem focuses on a fixed and known triple $(s_{\alpha}^\dagger,t_{\alpha}^\dagger,u_{\alpha}^\dagger)$, adapting it to allow for the use of an estimated triple with an in-probability limit would be straightforward.  

The cutoff in \eqref{eqn:joint_cutoff} considers the joint event regarding $\GG \tilde{f}$ and $\GG f$ for $f\in\mathcal{F}$ and $\tilde{f}\in\tilde{\mathcal{F}}$, thereby avoiding the use of the union bound required by the approach in Section~\ref{sec:unifCSVal}. The tightness of this union bound relies on whether the event that $\Pi^*$ is contained in the first stage policy set, namely $\{\Pi^*\subseteq \hat{\Pi}_\beta\}$, and the event that $[\psi_0^{\ell}, \psi_0^u]$ is contained in the second stage confidence interval are disjoint. Of course, when these events are fully disjoint, the union bound will be tight. 
When they are independent, the (asymptotic) probability that both events occur is $\beta(\alpha-\beta)$, which will be small for choices of $\alpha$ and $\beta$ commonly used in practice. Hence, the union bound will only be slightly loose in these cases. Finally, when the events fully overlap, the union bound will be as loose as possible. 
These scenarios can be better understood by relating them to primary and subsidiary outcomes. Generally, the dependence or independence between the events is likely to correlate with the extent to which primary and subsidiary outcomes depend on each other. The events tend to be independent when primary and subsidiary outcomes are independent, and dependent otherwise.

\section{Numerical experiment}\label{sec:simulation}

\subsection{A 1D simulation}\label{sec:1D_sim} 
We conduct simulation studies to evaluate the length and coverage of $1-\alpha$ confidence intervals for bounds on a mean subsidiary outcome, $[\psi_0^\ell,\psi_0^u]$. Our first set of simulations focuses on a 1-dimensional threshold policy class, denoted as $\Pi=\{\mathbf{1}_{[a,\infty)}: a \in [-1, 1]\}$. We compare the confidence intervals from four approaches. The first is the union bounding approach described in Section~\ref{sec:unifCSVal}, denoted as \textsf{union}. The second is the joint approach described in Section~\ref{sec:joint}, denoted as \textsf{joint}. The third is the one-step estimator approach described in Section~\ref{sec:inf_under_margin_cond}, denoted as \textsf{one-step}. To ensure that this approach applies, we design our scenarios so that the optimal policy for the unrestricted policy class lies in the threshold class $\Pi$. Consequently, in our simulation study, an estimate of the optimal policy in $\Pi$ also estimates the optimal policy in the unrestricted class. 
The fourth is a one-step estimator with sample splitting, denoted as \textsf{os-split}. This approach is the same as \textsf{one-step}, except that we obtain an estimate $\widehat{\pi}_1$ of the $\Omega$-optimal policy using only half of the data, and construct a Wald-type confidence interval for $\Psi_{\widehat{\pi}_1}(P_0)$ using the other half. 
Last, we present an oracle method, denoted as \textsf{oracle}, that knows the specific $\Omega$-optimal policies that provide the upper and lower bounds, $\psi_0^u$ and $\psi_0^\ell$. The oracle method uses precisely those policies and construct a Wald-type confidence interval for $[\psi_0^\ell, \psi_0^u]$. Since we have no hope of getting optimal policies \textit{a priori}, the oracle method cannot be used in practice.  

We examine three distinct scenarios with an illustration of the $\Omega$ and $\Psi$ values of each policy under various scenarios in the three panels in Figure~\ref{fig:scenarios}. The left panel describes the situation where the set of $\Omega$-optimal policies, $\Pi^*$, is not unique. In this scenario, $\Pi^*=\{\mathbf{1}_{[a,\infty)}: a \in [-0.5, 0]\}$, and the margin condition (Condition \ref{cond:margin_ZY}) is not satisfied for any $\zeta$. The middle panel describes the situation where $\Pi^*$ is unique the margin condition is satisfied for any $\zeta>2$, as we can see that when $\pi$ is around the optimal policy, $q_{b,0}(X)$ varies much faster than $s_{b,0}(X)$. The right panel describes the situation where $\Pi^*$ is unique but the margin condition is not satisfied for any $\zeta$, as we can see that as $X$ varies, both $q_{b,0}(X)$ and $s_{b,0}(X)$ vary linearly. 

\begin{figure}[!htb]
    \centering
    \hspace{-31pt}\includegraphics[scale=0.236]{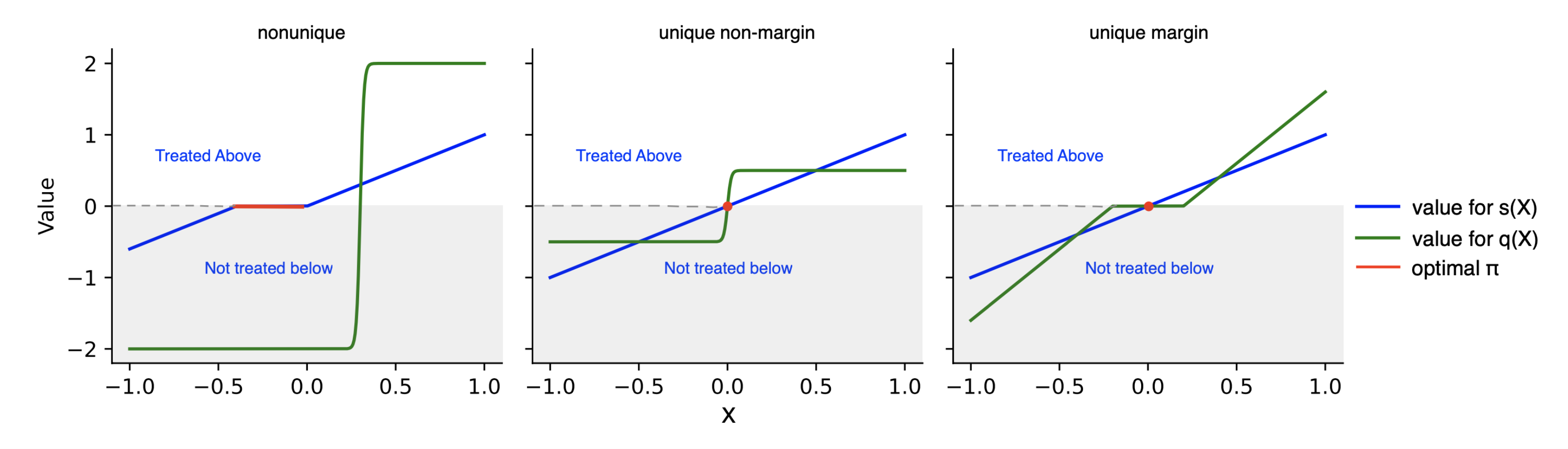}
    \includegraphics[scale=0.3]{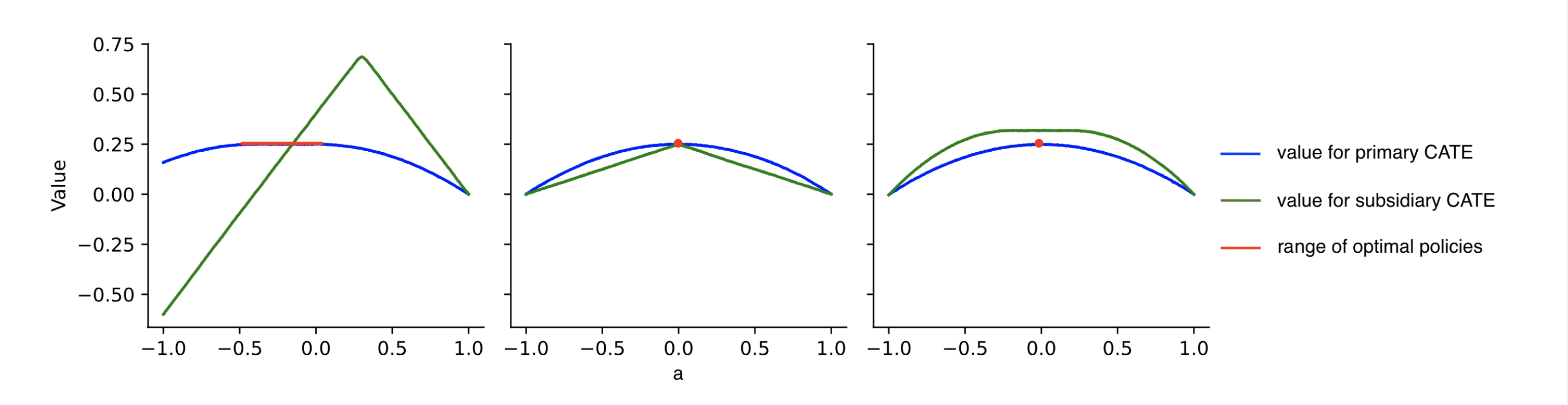}
    \caption{An illustration of $s_{b,0}(X)$- and $q_{b,0}(X)$-value and $\Omega$- and $\Psi$-value for a 1-dimensional threshold policy class $\Pi=\{\mathbf{1}_{[a,\infty)}: a \in [-1, 1]\}$ under different scenarios: the optimal policy for the primary outcome is nonunique, the optimal policy for the primary outcome is unique while the primary and subsidiary outcomes are correlated, and the optimal policy for the primary outcome is unique while the primary and subsidiary outcomes are not so correlated. The top figure represents $\Omega$- and $\Psi$-value, while the bottom figure represents $s_{b,0}(X)$- and $q_{b,0}(X)$-value.}
    \label{fig:scenarios}
\end{figure}

For each scenario, we consider sample sizes $n$ of 500 and 5000. To generate the set of policies, we construct a fine grid $(a_1,\cdots,a_N)$ for $N=10^5$ over $[-1,1]$ and denote the set of policy as $\Pi_N=\{\mathbf{1}_{[a_i,\infty)}: i\in[N]\}$. We use 1000 multiplier bootstrap replicates to estimate the supremum and infimum in generating the cutoffs. We let $\alpha=0.05$ when constructing confidence intervals and use 1000 Monte Carlo replications to compute their coverage of the true interval $[\psi_0^\ell, \psi_0^u]$ as well as approximate their average widths. We estimate the conditional probability $p(a|x)$ via a kernel density estimator as implemented in the \textsf{sklearn} package and the conditional probabilities $p(y|1,x)$ and $p(y|0,x)$ using gradient boosted trees as implemented in the \textsf{xgboost} package, both with the default settings. 
The Python code to reproduce the simulations is available at https://github.com/zhaoqil/EstimationSubsidiary. 

Table~\ref{tab:CI_coverage} shows the coverages and the widths of confidence intervals of $[\psi_0^{\ell}, \psi_0^u]$ for different scenarios and different methods. We can see the the one-step estimator fails to provide a nominal coverage when the margin condition (Condition~\ref{cond:margin_ZY}) is not satisfied. The other two methods produce similar coverages. We compare the confidence intervals with an oracle confidence interval, which is a lower bound on the width of any valid $1-\alpha$ confidence interval, and calculate the relative widths. 
We can see that the joint and union bounding methods generate confidence intervals about 2.3 times and 2.1 times as wide as the oracle confidence interval when the optimal policy is non-unique and unique, respectively. These results show that although our methods are conservative, they are relatively successful in maintaining a narrow confidence interval. In contrast, the one-step estimator produces a confidence interval that is about the same width as the oracle confidence interval, but it fails to provide valid coverage when the margin condition fails. Table~\ref{tab:CI_coverage_large} provides coverages and confidence interval widths with a larger sample size of 5000. In the non-unique setting, since there are multiple optimal policies for the primary outcome, $[\psi_0^\ell, \psi_0^u]$ will be an interval with some length. In our setting, we can see from the lower-left plot of Figure~\ref{fig:scenarios} that the length of $[\psi_0^\ell, \psi_0^u]$ is about 0.5, so any valid confidence interval for $[\psi_0^\ell, \psi_0^u]$ must have at least that length. Comparing the widths in Table~\ref{tab:CI_coverage} and \ref{tab:CI_coverage_large}, we can see that both the union bounding method and the joint method produce confidence intervals approaching that limit. In the setting where $\Omega$-optimal policy is unique, the widths of the confidence intervals for all methods approach zero as $n$ goes to infinity. 

\begin{table}[tb]
\centering
\begin{tabular}{|l|llll|lllll|}
\hline
                  & \multicolumn{4}{c|}{coverage}                                      & \multicolumn{5}{c|}{ width}                                                                    \\ \hline
                  & \multicolumn{1}{l|}{union} & \multicolumn{1}{l|}{joint} & \multicolumn{1}{l|}{one-step} & os-split & \multicolumn{1}{l|}{union} & \multicolumn{1}{l|}{joint} & \multicolumn{1}{l|}{one-step} & \multicolumn{1}{l||}{os-split} & oracle \\ \hline
non-unique        & \multicolumn{1}{l|}{1.000}   & \multicolumn{1}{l|}{1.000} & \multicolumn{1}{l|}{0.000}   & 0.000      & \multicolumn{1}{l|}{1.549} & \multicolumn{1}{l|}{1.538} & \multicolumn{1}{l|}{0.240} & \multicolumn{1}{l||}{0.317}    & 0.668  \\ \hline
unique non-margin & \multicolumn{1}{l|}{0.980}  & \multicolumn{1}{l|}{0.980} & \multicolumn{1}{l|}{0.812}  & 0.751    & \multicolumn{1}{l|}{0.148} & \multicolumn{1}{l|}{0.143} & \multicolumn{1}{l|}{0.068} & \multicolumn{1}{l||}{0.089}    & 0.068  \\ \hline
unique margin     & \multicolumn{1}{l|}{0.978} & \multicolumn{1}{l|}{0.981} & \multicolumn{1}{l|}{0.949} & 0.953    & \multicolumn{1}{l|}{0.149} & \multicolumn{1}{l|}{0.144} & \multicolumn{1}{l|}{0.074} & \multicolumn{1}{l||}{0.108}    & 0.074  \\ \hline
\end{tabular}
\caption{Coverage and width of $[\psi_0^\ell, \psi_0^u]$ for different scenarios with sample size $n=500$}
\label{tab:CI_coverage}
\end{table}

\begin{table}[tb]
\centering
\begin{tabular}{|l|llll|lllll|}
\hline
                  & \multicolumn{4}{c|}{coverage}                                      & \multicolumn{5}{c|}{ width}                                                                    \\ \hline
                  & \multicolumn{1}{l|}{union} & \multicolumn{1}{l|}{joint} & \multicolumn{1}{l|}{one-step} & os-split & \multicolumn{1}{l|}{union} & \multicolumn{1}{l|}{joint} & \multicolumn{1}{l|}{one-step} & \multicolumn{1}{l||}{os-split} & oracle \\ \hline
non-unique        & \multicolumn{1}{l|}{1.000}   & \multicolumn{1}{l|}{1.000} & \multicolumn{1}{l|}{0.000}   & 0.000      & \multicolumn{1}{l|}{1.091} & \multicolumn{1}{l|}{1.061} & \multicolumn{1}{l|}{0.061} & \multicolumn{1}{l||}{0.096}    & 0.561  \\ \hline
unique non-margin & \multicolumn{1}{l|}{0.981}  & \multicolumn{1}{l|}{0.986} & \multicolumn{1}{l|}{0.810}  & 0.734    & \multicolumn{1}{l|}{0.036} & \multicolumn{1}{l|}{0.035} & \multicolumn{1}{l|}{0.017} & \multicolumn{1}{l||}{0.027}    & 0.016  \\ \hline
unique margin     & \multicolumn{1}{l|}{0.983} & \multicolumn{1}{l|}{0.989} & \multicolumn{1}{l|}{0.946} & 0.949    & \multicolumn{1}{l|}{0.040} & \multicolumn{1}{l|}{0.036} & \multicolumn{1}{l|}{0.023} & \multicolumn{1}{l||}{0.037}    & 0.023  \\ \hline
\end{tabular}
\caption{Coverage and width of $[\psi_0^\ell, \psi_0^u]$ for different scenarios with sample size $n=5000$}
\label{tab:CI_coverage_large}
\end{table}

\subsection{A 3D simulation}

We also added a scenario where we have a 3D policy and the optimal policy is unique. The policy class is a restricted tree class, denoted as $\Pi=\{x\mapsto \mathbf{1}\{x\geq a_1, x\geq a_2, x\geq a_3\}: a_1, a_2, a_3\in[-1, 1]\}\}$. We design our scenario so that the optimal policy for the unrestricted policy class lies in the tree class. We compare the outcome interval from three approaches: \textsf{union}, \textsf{joint}, and \textsf{one-step}. The method \textsf{os-split} provides a wider interval while having a worse coverage than \textsf{one-step} in 1D simulation results, so we drop it from the simulation. For each scenario, we consider a sample size $n$ of 500. We again use 1000 multiplier bootstrap replicates to estimate the supremum and infimum. In this scenario, instead of generating a fine grid and computing the maximum over the grid, we use the \textsf{nlopt} package to numerically approximate the maximum. We let $\alpha=0.05$ and use 500 Monte Carlo replications to compute the coverage and approximate the average confidence interval widths. Table~\ref{tab:3D} shows the results. The joint methods achieves slightly shorter widths in this setting (5-6\%), and the results are otherwise similar to those from Section~\ref{sec:1D_sim}.

\begin{table}[tb]
\centering
\begin{tabular}{|l|lll|llll|}
\hline
          & \multicolumn{3}{c|}{coverage}                                      & \multicolumn{4}{c|}{width}                                                                    \\ \hline
          & \multicolumn{1}{l|}{union} & \multicolumn{1}{l|}{joint} & one-step & \multicolumn{1}{l|}{union} & \multicolumn{1}{l|}{joint} & \multicolumn{1}{l|}{one-step} & oracle \\ \hline
3D margin & \multicolumn{1}{l|}{0.970} & \multicolumn{1}{l|}{0.948} & 0.940    & \multicolumn{1}{l|}{0.199} & \multicolumn{1}{l|}{0.186} & \multicolumn{1}{l|}{0.124}    & 0.124  \\ \hline
3D non-margin & \multicolumn{1}{l|}{1.000} & \multicolumn{1}{l|}{0.988} & 0.594    & \multicolumn{1}{l|}{0.185} & \multicolumn{1}{l|}{0.175} & \multicolumn{1}{l|}{0.092}    & 0.092  \\ \hline
\end{tabular}
\caption{Coverage and width for 3D policy class with sample size $n=500$}
\label{tab:3D}
\end{table}

\section{Discussion}

The problem studied in existing works aiming to infer about the optimal value of an optimal rule can be viewed as a special case of our setup, where the subsidiary and primary outcomes coincide. In these cases, our two-stage approaches provide ways to make inference without the margin condition considered in such works \cite{qian2011performance,Alex16}. Instead, we need uniform asymptotic linearity for the value functions and an appropriately restricted policy class. The margin condition could fail if the subsidiary metric varies too much across the set of policies that are nearly optimal for the primary metric \cite{luedtke2020performance}. However, if the policy class is Donsker and the estimator is established via debiased machine learning, the uniform asymptotic linearity condition will be plausible even when a margin condition does not hold. 

In our numerical experiments, our union bounding and joint approaches produced valid confidence intervals, even if they were somewhat conservative. 
Under margin conditions, these intervals attain a parametric $n^{-1/2}$ rate, matching those based on an efficient one-step estimator, although with a less favorable leading constant. 
However, when the margin conditions fail, intervals based on the one-step estimator fail to achieve valid coverage. 
In future research, it would be interesting to develop an adaptive procedure that is leading-constant-optimal under margin conditions and, even without them, can produce intervals that provide valid coverage.

As for other future work, it is worth exploring methods for inferring subsidiary metrics using observations from adaptive experiments, which are non-independent but have a martingale structure. Observations from longitudinal settings could also be considered. Additionally, one could examine simultaneous inference for multiple subsidiary metrics rather than one. 

\section*{Acknowledgements}

This work was supported by the National Institutes of Health under award number DP2-LM013340 and the National Science Foundation under award number DMS-2210216.

\bibliographystyle{unsrtnat}
\bibliography{references}

\begin{thebibliography}{38}
\providecommand{\natexlab}[1]{#1}
\providecommand{\url}[1]{\texttt{#1}}
\expandafter\ifx\csname urlstyle\endcsname\relax
  \providecommand{\doi}[1]{doi: #1}\else
  \providecommand{\doi}{doi: \begingroup \urlstyle{rm}\Url}\fi

\bibitem[Athey and Wager(2021)]{athey2021policy}
Susan Athey and Stefan Wager.
\newblock Policy learning with observational data.
\newblock \emph{Econometrica}, 89\penalty0 (1):\penalty0 133--161, 2021.

\bibitem[Ling et~al.(2021)Ling, Upadhyaya, Chen, Jiang, and
  Kim]{ling2021heterogeneous}
Yaobin Ling, Pulakesh Upadhyaya, Luyao Chen, Xiaoqian Jiang, and Yejin Kim.
\newblock Heterogeneous treatment effect estimation using machine learning for
  healthcare application: tutorial and benchmark.
\newblock \emph{arXiv preprint arXiv:2109.12769}, 2021.

\bibitem[Hill et~al.(2017)Hill, Nassif, Liu, Iyer, and
  Vishwanathan]{hill2017efficient}
Daniel~N Hill, Houssam Nassif, Yi~Liu, Anand Iyer, and SVN Vishwanathan.
\newblock An efficient bandit algorithm for realtime multivariate optimization.
\newblock In \emph{Proceedings of the 23rd ACM SIGKDD International Conference
  on Knowledge Discovery and Data Mining}, pages 1813--1821, 2017.

\bibitem[Qian and Murphy(2011)]{qian2011performance}
Min Qian and Susan~A Murphy.
\newblock Performance guarantees for individualized treatment rules.
\newblock \emph{Annals of statistics}, 39\penalty0 (2):\penalty0 1180, 2011.

\bibitem[Zhao et~al.(2012)Zhao, Zeng, Rush, and Kosorok]{zhao2012estimating}
Yingqi Zhao, Donglin Zeng, A~John Rush, and Michael~R Kosorok.
\newblock Estimating individualized treatment rules using outcome weighted
  learning.
\newblock \emph{Journal of the American Statistical Association}, 107\penalty0
  (499):\penalty0 1106--1118, 2012.

\bibitem[Dud{\'\i}k et~al.(2011)Dud{\'\i}k, Langford, and Li]{dudik2011doubly}
Miroslav Dud{\'\i}k, John Langford, and Lihong Li.
\newblock Doubly robust policy evaluation and learning.
\newblock \emph{arXiv preprint arXiv:1103.4601}, 2011.

\bibitem[Zhang et~al.(2013)Zhang, Tsiatis, Laber, and
  Davidian]{zhang2013robust}
Baqun Zhang, Anastasios~A Tsiatis, Eric~B Laber, and Marie Davidian.
\newblock Robust estimation of optimal dynamic treatment regimes for sequential
  treatment decisions.
\newblock \emph{Biometrika}, 100\penalty0 (3):\penalty0 681--694, 2013.

\bibitem[Luedtke et~al.(2020)Luedtke, Chambaz, et~al.]{luedtke2020performance}
Alex Luedtke, Antoine Chambaz, et~al.
\newblock Performance guarantees for policy learning.
\newblock In \emph{Annales de l'Institut Henri Poincar{\'e}, Probabilit{\'e}s
  et Statistiques}, pages 2162--2188. Institut Henri Poincar{\'e}, 2020.

\bibitem[Shi et~al.(2020)Shi, Zhang, Lu, and Song]{shi2020statistical}
Chengchun Shi, Sheng Zhang, Wenbin Lu, and Rui Song.
\newblock Statistical inference of the value function for reinforcement
  learning in infinite horizon settings.
\newblock \emph{arXiv preprint arXiv:2001.04515}, 2020.

\bibitem[Liu et~al.(2021)Liu, Shahn, Robins, and Rotnitzky]{liu2021efficient}
Lin Liu, Zach Shahn, James~M Robins, and Andrea Rotnitzky.
\newblock Efficient estimation of optimal regimes under a no direct effect
  assumption.
\newblock \emph{Journal of the American Statistical Association}, 116\penalty0
  (533):\penalty0 224--239, 2021.

\bibitem[van~der Laan and Luedtke(2015)]{van2015targeted}
Mark~J van~der Laan and Alexander~R Luedtke.
\newblock Targeted learning of the mean outcome under an optimal dynamic
  treatment rule.
\newblock \emph{Journal of causal inference}, 3\penalty0 (1):\penalty0 61--95,
  2015.

\bibitem[Chambaz et~al.(2017)Chambaz, Zheng, and van~der
  Laan]{chambaz2017targeted}
Antoine Chambaz, Wenjing Zheng, and Mark~J van~der Laan.
\newblock Targeted sequential design for targeted learning inference of the
  optimal treatment rule and its mean reward.
\newblock \emph{Annals of statistics}, 45\penalty0 (6):\penalty0 2537, 2017.

\bibitem[Robins(2004)]{robins2004optimal}
James~M Robins.
\newblock Optimal structural nested models for optimal sequential decisions.
\newblock In \emph{Proceedings of the second seattle Symposium in
  Biostatistics}, pages 189--326. Springer, 2004.

\bibitem[Luedtke and van~der Laan(2016)]{Alex16}
Alexander~R. Luedtke and Mark~J. van~der Laan.
\newblock {Statistical inference for the mean outcome under a possibly
  non-unique optimal treatment strategy}.
\newblock \emph{The Annals of Statistics}, 44\penalty0 (2):\penalty0 713 --
  742, 2016.
\newblock \doi{10.1214/15-AOS1384}.
\newblock URL \url{https://doi.org/10.1214/15-AOS1384}.

\bibitem[Chakraborty et~al.(2013)Chakraborty, Laber, and
  Zhao]{chakraborty2013inference}
Bibhas Chakraborty, Eric~B Laber, and Yingqi Zhao.
\newblock Inference for optimal dynamic treatment regimes using an adaptive
  m-out-of-n bootstrap scheme.
\newblock \emph{Biometrics}, 69\penalty0 (3):\penalty0 714--723, 2013.

\bibitem[Boominathan et~al.(2020)Boominathan, Oberst, Zhou, Kanjilal, and
  Sontag]{boominathan2020treatment}
Soorajnath Boominathan, Michael Oberst, Helen Zhou, Sanjat Kanjilal, and David
  Sontag.
\newblock Treatment policy learning in multiobjective settings with fully
  observed outcomes.
\newblock In \emph{Proceedings of the 26th ACM SIGKDD International Conference
  on Knowledge Discovery \& Data Mining}, pages 1937--1947, 2020.

\bibitem[Bica et~al.(2021)Bica, Alaa, Lambert, and Van
  Der~Schaar]{bica2021real}
Ioana Bica, Ahmed~M Alaa, Craig Lambert, and Mihaela Van Der~Schaar.
\newblock From real-world patient data to individualized treatment effects
  using machine learning: current and future methods to address underlying
  challenges.
\newblock \emph{Clinical Pharmacology \& Therapeutics}, 109\penalty0
  (1):\penalty0 87--100, 2021.

\bibitem[Phillips et~al.(2020)Phillips, Sauzet, and
  Cornelius]{phillips2020statistical}
Rachel Phillips, Odile Sauzet, and Victoria Cornelius.
\newblock Statistical methods for the analysis of adverse event data in
  randomised controlled trials: a scoping review and taxonomy.
\newblock \emph{BMC medical research methodology}, 20\penalty0 (1):\penalty0
  1--13, 2020.

\bibitem[Freemantle et~al.(2003)Freemantle, Calvert, Wood, Eastaugh, and
  Griffin]{freemantle2003composite}
Nick Freemantle, Melanie Calvert, John Wood, Joanne Eastaugh, and Carl Griffin.
\newblock Composite outcomes in randomized trials: greater precision but with
  greater uncertainty?
\newblock \emph{Jama}, 289\penalty0 (19):\penalty0 2554--2559, 2003.

\bibitem[Butler et~al.(2018)Butler, Laber, Davis, and
  Kosorok]{butler2018incorporating}
Emily~L Butler, Eric~B Laber, Sonia~M Davis, and Michael~R Kosorok.
\newblock Incorporating patient preferences into estimation of optimal
  individualized treatment rules.
\newblock \emph{Biometrics}, 74\penalty0 (1):\penalty0 18--26, 2018.

\bibitem[Murray et~al.(2016)Murray, Thall, and Yuan]{murray2016utility}
Thomas~A Murray, Peter~F Thall, and Ying Yuan.
\newblock Utility-based designs for randomized comparative trials with
  categorical outcomes.
\newblock \emph{Statistics in medicine}, 35\penalty0 (24):\penalty0 4285--4305,
  2016.

\bibitem[Luckett et~al.(2021)Luckett, Laber, Kim, and
  Kosorok]{luckett2021estimation}
Daniel~J Luckett, Eric~B Laber, Siyeon Kim, and Michael~R Kosorok.
\newblock Estimation and optimization of composite outcomes.
\newblock \emph{Journal of Machine Learning Research}, 22\penalty0
  (167):\penalty0 1--40, 2021.

\bibitem[Laber et~al.(2014)Laber, Lizotte, and Ferguson]{laber2014set}
Eric~B Laber, Daniel~J Lizotte, and Bradley Ferguson.
\newblock Set-valued dynamic treatment regimes for competing outcomes.
\newblock \emph{Biometrics}, 70\penalty0 (1):\penalty0 53--61, 2014.

\bibitem[Linn et~al.(2015)Linn, Laber, and Stefanski]{linn2015chapter}
Kristin~A Linn, Eric~B Laber, and Leonard~A Stefanski.
\newblock Chapter 15: Estimation of dynamic treatment regimes for complex
  outcomes: balancing benefits and risks.
\newblock In \emph{Adaptive treatment strategies in practice: Planning trials
  and analyzing data for personalized medicine}, pages 249--262. SIAM, 2015.

\bibitem[Wang et~al.(2018)Wang, Fu, and Zeng]{wang2018learning}
Yuanjia Wang, Haoda Fu, and Donglin Zeng.
\newblock Learning optimal personalized treatment rules in consideration of
  benefit and risk: with an application to treating type 2 diabetes patients
  with insulin therapies.
\newblock \emph{Journal of the American Statistical Association}, 113\penalty0
  (521):\penalty0 1--13, 2018.

\bibitem[FDA(2006)]{FDA2006}
FDA.
\newblock Guidance for industry: Adverse reactions section of labeling for
  human prescription drug and biological products -- content and format
  https://www.fda.gov/media/72139/download.
\newblock 2006.

\bibitem[Afeche et~al.(2017)Afeche, Araghi, and Baron]{afeche2017customer}
Philipp Afeche, Mojtaba Araghi, and Opher Baron.
\newblock Customer acquisition, retention, and service access quality: Optimal
  advertising, capacity level, and capacity allocation.
\newblock \emph{Manufacturing \& Service Operations Management}, 19\penalty0
  (4):\penalty0 674--691, 2017.

\bibitem[Gunantara(2018)]{gunantara2018review}
Nyoman Gunantara.
\newblock A review of multi-objective optimization: Methods and its
  applications.
\newblock \emph{Cogent Engineering}, 5\penalty0 (1):\penalty0 1502242, 2018.

\bibitem[Deb(2014)]{deb2014multi}
Kalyanmoy Deb.
\newblock Multi-objective optimization.
\newblock In \emph{Search methodologies}, pages 403--449. Springer, 2014.

\bibitem[Bentley and Wakefield(1998)]{bentley1998finding}
Peter~J Bentley and Jonathan~P Wakefield.
\newblock Finding acceptable solutions in the pareto-optimal range using
  multiobjective genetic algorithms.
\newblock In \emph{Soft computing in engineering design and manufacturing},
  pages 231--240. Springer, 1998.

\bibitem[Audibert and Tsybakov(2007)]{audibert2007fast}
Jean-Yves Audibert and Alexandre~B Tsybakov.
\newblock Fast learning rates for plug-in classifiers.
\newblock \emph{The Annals of statistics}, 35\penalty0 (2):\penalty0 608--633,
  2007.

\bibitem[Schick(1986)]{schick1986asymptotically}
Anton Schick.
\newblock On asymptotically efficient estimation in semiparametric models.
\newblock \emph{The Annals of Statistics}, pages 1139--1151, 1986.

\bibitem[Van Der~Vaart et~al.(1996)Van Der~Vaart, van~der Vaart, van~der Vaart,
  and Wellner]{van1996weak}
Aad~W Van Der~Vaart, Aad van~der Vaart, Adrianus~Willem van~der Vaart, and Jon
  Wellner.
\newblock \emph{Weak convergence and empirical processes: with applications to
  statistics}.
\newblock Springer Science \& Business Media, 1996.

\bibitem[Berger and Hsu(1996)]{berger1996bioequivalence}
Roger~L Berger and Jason~C Hsu.
\newblock Bioequivalence trials, intersection-union tests and equivalence
  confidence sets.
\newblock \emph{Statistical Science}, 11\penalty0 (4):\penalty0 283--319, 1996.

\bibitem[Pfanzagl(1982)]{pfanzagl1982lecture}
Johann Pfanzagl.
\newblock Lecture notes in statistics.
\newblock \emph{Contributions to a general asymptotic statistical theory}, 13,
  1982.

\bibitem[Van Der~Laan and Rubin(2006)]{van2006targeted}
Mark~J Van Der~Laan and Daniel Rubin.
\newblock Targeted maximum likelihood learning.
\newblock \emph{The international journal of biostatistics}, 2\penalty0 (1),
  2006.

\bibitem[Chernozhukov et~al.(2018)Chernozhukov, Chetverikov, Demirer, Duflo,
  Hansen, Newey, and Robins]{chernozhukov2018double}
Victor Chernozhukov, Denis Chetverikov, Mert Demirer, Esther Duflo, Christian
  Hansen, Whitney Newey, and James Robins.
\newblock Double/debiased machine learning for treatment and structural
  parameters, 2018.

\bibitem[Van~der Vaart(2000)]{van2000asymptotic}
Aad~W Van~der Vaart.
\newblock \emph{Asymptotic statistics}, volume~3.
\newblock Cambridge university press, 2000.

\end{thebibliography}

\appendix

\section{Proofs for Section~\ref{sec:inf_under_margin_cond}}\label{sec:proof_sec4}

Let $Q_{X,0}$ be the marginal distribution of $X$ under $P_0$, and let $Q_{Y^*,0}$ and $Q_{Y^\dagger,0}$ be respectively the conditional distribution of $Y^*$ and $Y^\dagger$ given $A$, $X$ under $P_0$. Let $\{P_\epsilon:\epsilon\in\R\}\subset \M$ be a parametric submodel that is such that $P_\epsilon=P_0$ when $\epsilon=0$. This submodel is defined so that the marginal distribution of $X$ and the conditional distributions of $Y^\dagger$ and $Y^*$ given $(A,X)$ satisfy
\begin{align}
d Q_{X, \epsilon}(x)&=(1+\epsilon S_{X}(x)) d Q_{X, 0}(x), \text { where } \mathbb{E}_{0}\left[S_{X}(x)\right]=0 \text { and } \sup _{x}\left|S_{X}(x)\right|\leq m<\infty,\label{eqn:S_x} \\
d Q_{Y^\dagger, \epsilon}(z \mid a,x) &=\left(1+\epsilon S_{Y^\dagger}(z \mid a,x)\right) d Q_{Y^\dagger, 0}(z \mid a,x) \label{eqn:S_Z},\\
& \text { where } \mathbb{E}_{0}\left[S_{Y^\dagger} \mid A, X\right]=0 \textnormal{ $P_0$-a.s.} \text { and } \sup _{x, a, z}\left|S_{Y^\dagger}(z \mid a, x)\right|<\infty,\textnormal{ and }\nonumber \\
d Q_{Y^*, \epsilon}(y \mid a,x) &=\left(1+\epsilon S_{Y^*}(y \mid a,x)\right) d Q_{Y^*, 0}(y \mid a,x) \label{eqn:S_Y}\\
& \text { where } \mathbb{E}_{0}\left[S_{Y^*} \mid A, X\right]=0 \textnormal{ $P_0$-a.s.} \text { and } \sup _{x, a, y}\left|S_{Y^*}(y \mid a, x)\right|<\infty.\nonumber
\end{align}
We let $q_{b,\epsilon}(x)=q_b(P_\epsilon)(x)$ and $s_{b,\epsilon}(x)=s_b(P_\epsilon)(x)$. 
\begin{proof}[Proof of Lemma~\ref{prop:path_diff}]

Note that $\pi_P^*(x)=\I\{q_b(P)(x)>0\}$ for all $x\in\X$. Following the idea of the proof of Theorem 3 in \cite{Alex16}, we observe that 
\[\begin{aligned}
\Psi^*(P)-\E_{P} \E_{P}[Y^\dagger \mid A=0, X] &=\E_{P}\left[\pi_P^{*}(X) s_{b}(P)(X)\right].
\end{aligned}\]
By a telescoping argument, 
\begin{align}
    \Psi^*(P_\epsilon)-\Psi^*(P_0) &= \E_{P_\epsilon}\E_{P_\epsilon}[Y^\dagger|A=\pi_{P_\epsilon}^*(X),X]-\E_{P_0}\E_{P_0}[Y^\dagger|A=\pi^*(X),X]\nonumber\\
    &= \E_{P_\epsilon}\E_{P_\epsilon}[Y^\dagger|A=\pi_{P_\epsilon}^*(X),X]-\E_{P_\epsilon}\E_{P_\epsilon}[Y^\dagger|A=\pi^*(X),X]\nonumber\\
    &\quad +\E_{P_\epsilon}\E_{P_\epsilon}[Y^\dagger|A=\pi^*(X),X]-\E_{P_0}\E_{P_0}[Y^\dagger|A=\pi^*(X),X]\nonumber\\
    &= \E_{P_\epsilon}[(\I(q_{b,\epsilon}>0)-\I(q_{b,0}>0))\cdot s_{b,\epsilon}]+\Psi_{\pi^*}(P_\epsilon)-\Psi_{\pi^*}(P_0).\label{eqn:decomp_psi}
\end{align}
It is known that for a fixed $\pi$, $\Psi_\pi$ is pathwise differentiable with gradient $D(\pi,P_0)$. We shall now show that the first term is $o(\epsilon)$. Letting $B_1:=\{x\in\X:q_{b,0}(x)=0\}$, 
we have
\begin{align*}
&\E_{P_{\epsilon}}\left[\left(I\left(q_{b, \epsilon}>0\right)-I\left(q_{b, 0}>0\right)\right) s_{b, \epsilon}\right] \\
&=\int_{\mathcal{X} \backslash B_{1}}\left(I\left(q_{b, \epsilon}>0\right)-I\left(q_{b, 0}>0\right)\right) s_{b, \epsilon} d Q_{X, \epsilon}+\int_{B_{1}}\left(I\left(q_{b, \epsilon}>0\right)-I\left(q_{b, 0}>0\right)\right) s_{b, \epsilon} d Q_{X, \epsilon}.
\end{align*}
Under Condition~\ref{cond:margin_ZY}, we know that $\Pr_0(q_{b,0}(X)\ne 0)=1$, so the second term is zero. Then we aim to show that the first term is $o(|\epsilon|)$. Note that 
\begin{align*}
    \left|\int_{\mathcal{X} \backslash B_{1}}\left(I\left(q_{b, \epsilon}>0\right)-I\left(q_{b, 0}>0\right)\right) s_{b, \epsilon} d Q_{X, \epsilon} \right|&\leq \int_{\mathcal{X} \backslash B_{1}}\left|\left(I\left(q_{b, \epsilon}>0\right)-I\left(q_{b, 0}>0\right)\right) s_{b, \epsilon}\right| d Q_{X, \epsilon} \\
    &\leq \int_{\mathcal{X} \backslash B_{1}}I\left(\bigabs{q_{b, 0}}<\bigabs{q_{b, \epsilon}-q_{b, 0}}\right) \left|s_{b, \epsilon}\right| d Q_{X, \epsilon}
\end{align*}
by looking at the sign of $q_{b, \epsilon}$ and $q_{b, 0}$. Also, 
\begin{align*}
q_{b, \epsilon}(x) &= \int y\left(d Q_{Y^*, \epsilon}(y \mid A=1, X=x)-d Q_{Y^*, \epsilon}(y \mid A=0, X=x)\right) \\
&=q_{b, 0}(x)+\epsilon\left(\E_{0}\left[Y^* S_{Y^*}(Y^* \mid 1, X) \mid A=1, X=x\right]-\E_{0}\left[Y^* S_{Y^*}(Y^* \mid 0, X) \mid A=0, X=x\right]\right)\\
&= q_{b, 0}(x)+\epsilon \bar{h}(x)
\end{align*}
where \[\bar{h}(x)=\E_0[Y^*S_{Y^*}(Y^*|1,X)|A=1,X=x]-\E_0[Y^*S_{Y^*}(Y^*|0,X)|A=0,X=x].\]
Similarly, $s_{b,\epsilon}(x)=s_{b,0}(x)+\epsilon\cdot \tilde{h}(x)$ where \[\tilde{h}(x)=\E_0[Y^\dagger S_{Y^\dagger}(Y^\dagger|1,X)|A=1,X=x]-\E_0[Y^\dagger S_{Y^\dagger}(Y^\dagger|0,X)|A=0,X=x].\]
Note that $\tilde{h}$ and $\bar{h}$ are uniformly bounded since $Y^*$, $Y^\dagger$, $S_{Y^*}$, and $S_{Y^\dagger}$ are bounded. Let $H=\max\{\sup_{x}|\bar{h}(x)|,\\ \sup_{x}|\tilde{h}(x)|\}$. Therefore, 
\begin{align*}
    \int_{\mathcal{X} \backslash B_{1}}I\left(\bigabs{q_{b, 0}}<\bigabs{q_{b, \epsilon}-q_{b, 0}}\right) \left|s_{b, \epsilon}\right| d Q_{X, \epsilon}&\leq \int_{\mathcal{X} \backslash B_{1}}I\left(\bigabs{q_{b, 0}}<H|\epsilon|\right) \bigsmile{\left|s_{b, 0}\right|+H|\epsilon|} d Q_{X, \epsilon}\\
    &\leq (1+m|\epsilon|)\int_{\mathcal{X} \backslash B_{1}}I\left(\bigabs{q_{b, 0}}<H|\epsilon|\right) \bigsmile{\left|s_{b, 0}\right|+H|\epsilon|} d Q_{X, 0}\\
    &= (1+m|\epsilon|)\int_{\mathcal{X} \backslash B_{1}}I\left(0<\bigabs{q_{b, 0}}<H|\epsilon|\right) \bigsmile{\left|s_{b, 0}\right|+H|\epsilon|} d Q_{X, 0}.
\end{align*}
Denote $\tilde{\X}=\X\setminus B_1$. Under the first condition, define the set \[B_{2,t}=\{x\in\tilde{\X}:|s_{b,0}(x)|< Ct^{-1}|q_{b,0}(x)|\}.\] Then 
\begin{align*}
    &\int_{\mathcal{X} \backslash B_{1}}I\left(\bigabs{q_{b, 0}}<H|\epsilon|\right) \bigsmile{\left|s_{b, 0}\right|+H|\epsilon|} d Q_{X, 0}\\
    &=\int_{\tilde{\mathcal{X}}}I\left(0<\bigabs{q_{b, 0}}<H|\epsilon|\right) \bigsmile{\left|s_{b, 0}\right|+H|\epsilon|} d Q_{X, 0}\\
    &= \int_{B_{2,t}}I\left(0<\bigabs{q_{b, 0}}<H|\epsilon|\right) \bigsmile{\left|s_{b, 0}\right|+H|\epsilon|} d Q_{X, 0}+\int_{\tilde{\X}\setminus B_{2,t}}I\left(0<\bigabs{q_{b, 0}}<H|\epsilon|\right) \bigsmile{\left|s_{b, 0}\right|+H|\epsilon|} d Q_{X, 0}.
\end{align*}
On one hand, note that for $x\in B_{2,t}$ and under the fact that $|q_{b,0}(x)|\leq H|\epsilon|$ we have $|s_{b}(x)|\leq CHt^{-1}|\epsilon|$.  define $C_2$ such that $P_0(0<|q_{b,0}(X)|<t)\leq C_2t^{\gamma}$ for any $t>0$, the first term 
\begin{align}
    \int_{B_{2,t}}I\left(0<\bigabs{q_{b, 0}}<H|\epsilon|\right) \bigsmile{\left|s_{b, 0}\right|+H|\epsilon|} d Q_{X, 0}&\leq \int_{B_{2,t}}I\left(0<\bigabs{q_{b, 0}}<H|\epsilon|\right) \bigsmile{CHt^{-1}|\epsilon|+H|\epsilon|} d Q_{X, 0}\nonumber\\
    &\leq \bigsmile{CHt^{-1}|\epsilon|+H|\epsilon|}P_0\left(0<\bigabs{q_{b, 0}(X)}<H|\epsilon|\right)\nonumber\\
    &\leq \bigsmile{Ct^{-1}|\epsilon|+H|\epsilon|}  C_2(H|\epsilon|)^{\gamma}\label{eqn:first_term}
\end{align}
for $t<1$. For the second term, let $C_3:=\sup_{x}|s_{b,0}(x)|$, we have
\begin{align*}
    &\int_{\tilde{\X}\setminus B_{2,t}}I\left(0<\bigabs{q_{b, 0}}<H|\epsilon|\right) \bigsmile{\left|s_{b, 0}\right|+H|\epsilon|} d Q_{X, 0}\\
    &\leq (C_3+H|\epsilon|) P_0(0<|s_{b, 0}(X)|>Ct^{-1}|q_{b,0}(X)|))\\
    &\leq (C_3+H|\epsilon|)t^{\zeta}
\end{align*}
where the last inequality follows from Condition~\ref{cond:margin_ZY}. Therefore, the sum is bounded by \[\bigsmile{Ct^{-1}|\epsilon|+H|\epsilon|}  C_2(H|\epsilon|)^{\gamma}+(C_3+H|\epsilon|)t^{\zeta}.\] Taking $t=|\epsilon|^{\frac{1+\gamma}{\zeta+1}}$ gives that this is $O(|\epsilon|^{1+\gamma-\frac{1+\gamma}{\zeta+1}})$, which is $o(|\epsilon|)$ given that $\gamma>\frac{1}{\zeta}$.  
Combining all of the results above gives \[\lim _{\epsilon \rightarrow 0} \frac{1}{\epsilon} \E_{P_{\epsilon}}\left[\left(I\left(q_{b, \epsilon}>0\right)-I\left(q_{b, 0}>0\right)\right) s_{b, \epsilon}\right]=0.\]
Therefore, $\Psi^*$ is pathwise differentiable, and, per \eqref{eqn:decomp_psi}, has the same canonical gradient as the parameter $\Psi_{\pi^*}$, namely $D(\pi^*,P_0)$.
\end{proof}

\begin{proof}[Proof of Theorem~\ref{prop:asymp_linear_psi}]
We would first like to show that $\psi_{OS,n}$ is an asymptotically linear estimator of $\psi_0$. For simplicity of notation, we let $\pi_n^*:=\pi_{\hat{P}_n}^*$ and drop the dependence of $\pi$ in the definition of $\Psi_\pi$ in this proof. Note that $\psi_{OS,n}-\psi_0= (P_n-P_0)D(P_0)+(P_n-P_0)[D(\hat{P}_n)-D(P_0)]+R(\hat{P}_n,P_0)$. Note that the first term $(P_n-P_0)D(P_0)$ is the linear term and $(P_n-P_0)[D(\hat{P}_n)-D(P_0)]=o_{P_0}(n^{-1/2})$ under the Donsker condition and the fact that $\|D(\hat{P}_n)-D(P_0)\|_2\overset{p}{\rightarrow} 0$ (Lemma 19.24 of \cite{van2000asymptotic}). To show that $\psi_{OS,n}$ is asymptotically linear, we only need to argue that the remainder term $R(\hat{P}_n,P_0)$ is $o_{P_0}(n^{-1/2})$. Note that 
\begin{align*}
    P_0 D(\hat{P}_n) &= \E_0\bigbrak{\frac{\I\{A=\pi_n^*(X)\}}{p_n(A|X)}(Y^\dagger-s(A,X))+s(\pi_n^*(X),X)-\Psi(\hat{P}_n)}\\
    &= \E_0\bigbrak{\frac{\I\{A=\pi_n^*(X)\}}{p_n(A|X)}(s_0(A,X)-s(A,X))+s(\pi_n^*(X),X)-\Psi(\hat{P}_n)},
\end{align*}
by the law of total expectation. Therefore, 
\begin{align*}
    R(\hat{P}_n,P_0) &= \Psi(\hat{P}_n)-\Psi(P_0)+P_0 D(\hat{P}_n)\\
    &= \int \bigbrace{\frac{\I\{a=\pi_n^*(x)\}}{p_n(a|x)}(s_0(a,x)-s_n(a,x))+s_n(\pi_n^*(x),x)-s_0(\pi^*(x),x)}dP_0(a,x) \\
    &= \int \left(\frac{\I\{a=\pi_n^*(x)\}}{p_n(a|x)}-1\right)[s_0(\pi_n^*(x),x)-s_n(\pi_n^*(x),x)]dP_0(a,x) + \Psi_{\pi_n^*}(P_0) - \Psi_{\pi^*}(P_0) \\
    &= \iint \left(\frac{\I\{a=\pi_n^*(x)\}}{p_n(a|x)}-1\right)[s_0(\pi_n^*(x),x)-s_n(\pi_n^*(x),x)]p_0(a|x)da\, dP_0(x)\\
    &\quad+ \Psi_{\pi_n^*}(P_0) - \Psi_{\pi^*}(P_0)\\
    &= \int \left(\frac{p_0(\pi_n^*(x)|x)}{p_n(\pi_n^*(x)|x)}-1\right)[s_0(\pi_n^*(x),x)-s_n(\pi_n^*(x),x)]dP_0(x) \\
    &\quad+\Psi_{\pi_n^*}(P_0) - \Psi_{\pi^*}(P_0)\\
    &=: R_{1n}+R_{2n}.
\end{align*}
The first term $R_{1n}$ is $o_{P_0}(n^{-1/2})$ under under Condition~\ref{cond:5} --- see Proposition~\ref{prop:R_1n}. As for the second term $R_{2n}$, Proposition~\ref{prop:R_2n_simple} shows that it is $o_{P_0}(n^{-1/2})$ under the margin condition.
\end{proof}

\begin{proposition}\label{prop:R_1n}
Under Condition~\ref{cond:5}, $R_{1n}=o_{P_0}(n^{-1/2})$. 
\end{proposition}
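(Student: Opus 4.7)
The plan is to decompose $R_{1n}$ into two pieces based on the binary action, apply Cauchy–Schwarz to each, and invoke Condition~\ref{cond:5}.

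First I would split the integrand according to whether $\pi_n^*(x)=0$ or $\pi_n^*(x)=1$, writing
\begin{align*}
R_{1n} = \sum_{a\in\{0,1\}} \int \I\{\pi_n^*(x)=a\}\left(\frac{p_0(a|x)}{p_n(a|x)}-1\right)[s_0(a,x)-s_n(a,x)]\,dP_0(x).
\end{align*}
This step converts the integrand from depending on the random policy $\pi_n^*$ inside both factors into a fixed-action formulation, which is essential because Condition~\ref{cond:5} is stated for each $a\in\{0,1\}$ separately rather than for the composition with an estimated policy.

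Next, for each $a\in\{0,1\}$ I would bound the indicator by $1$ in absolute value and apply the Cauchy–Schwarz inequality with respect to $P_0$ to obtain
\begin{align*}
\left|R_{1n}\right| \le \sum_{a\in\{0,1\}} \left\|\frac{p_0(a|\cdot)}{p_n(a|\cdot)}-1\right\|_{2,P_0}\left\|s_{\widehat{P}_n}(a,\cdot)-s_{P_0}(a,\cdot)\right\|_{2,P_0}.
\end{align*}
By Condition~\ref{cond:5}, each summand on the right is $o_{P_0}(n^{-1/2})$, and since the sum has only two terms the conclusion $R_{1n}=o_{P_0}(n^{-1/2})$ follows immediately.

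There is no real obstacle here; the result is essentially a direct application of Cauchy–Schwarz. The only point requiring a small amount of care is the bookkeeping that allows Condition~\ref{cond:5}, which is phrased for each deterministic action $a$, to be applied even though $\pi_n^*$ is a data-dependent policy. This is handled cleanly by the decomposition in the first step, after which the indicator drops out of the Cauchy–Schwarz bound because it is dominated by $1$.
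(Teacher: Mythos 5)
Your proof is correct and follows essentially the same route as the paper: both arguments remove the dependence on the data-dependent policy $\pi_n^*$ by accounting for both actions $a\in\{0,1\}$ (you via an exact indicator decomposition, the paper via bounding the integrand by the sum over both actions), then apply Cauchy--Schwarz for each fixed $a$ and invoke Condition~\ref{cond:5}. The only cosmetic difference is that the paper ends with $2\max_{a}$ of the two Cauchy--Schwarz products rather than their sum, which is equivalent for the purpose of concluding $R_{1n}=o_{P_0}(n^{-1/2})$.
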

\begin{proof}
By Jensen's inequality, the fact that $\pi_n^*(x)\in\{0,1\}$ for all $x$, the fact that $(b+c)\le 2\max\{b,c\}$ for $b,c\in\mathbb{R}$, and Cauchy-Schwarz, we have that
\begin{align*}
        |R_{1n}|&=\left|\int \left(\frac{p_0(\pi_n^*(x)|x)}{p_n(\pi_n^*(x)|x)}-1\right)[s_0(\pi_n^*(x),x)-s_n(\pi_n^*(x),x)]dP_0(x)\right|\\
        &\le\int \left|\left(\frac{p_0(\pi_n^*(x)|x)}{p_n(\pi_n^*(x)|x)}-1\right)[s_0(\pi_n^*(x),x)-s_n(\pi_n^*(x),x)]\right|dP_0(x) \\
        &\le\int \sum_{a=0}^1 \left|\left(\frac{p_0(a|x)}{p_n(a|x)}-1\right)[s_0(a,x)-s_n(a,x)]\right|dP_0(x) \\
        &=\sum_{a=0}^1 \int \left|\left(\frac{p_0(a|x)}{p_n(a|x)}-1\right)[s_0(a,x)-s_n(a,x)]\right|dP_0(x) \\
        &\le 2\max_{a\in\{0,1\}}\int \left|\left(\frac{p_0(a|x)}{p_n(a|x)}-1\right)[s_0(a,x)-s_n(a,x)]\right|dP_0(x) \\
        &\leq 2\max_{a\in\{0,1\}} \left\{\left\|\frac{p_0(a \mid X)}{p_{n}(a \mid X)}-1\right\|_{2, P_{0}}\left\|s_n(a, X)-s_{0}(a, X)\right\|_{2, P_{0}}\right\}.
\end{align*}
\end{proof}
The following proposition shows that the second term $R_{2n}$ is $o_{P_0}(n^{-1/2})$ under our margin condition. 
\begin{proposition}\label{prop:R_2n_simple}
Assume Conditions \ref{cond:margin_ZY}, \ref{cond:margin_Y}, and \ref{cond:4} hold. 
Then, for any $\epsilon>0$, $|R_{2n}| = o_{P_0}(n^{-1/2})$.
\end{proposition}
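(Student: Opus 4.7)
My plan is to reduce $R_{2n}$ to an integral over the set on which $\pi_n^*$ and $\pi^*$ disagree, and then bound that integral via a layer-cake argument that combines both margin conditions. First, since $\Psi_\pi(P_0)=\int s_0(\pi(x),x)\,dP_0(x)$ and $\pi^*(x)=\I\{q_{b,0}(x)>0\}$, $\pi_n^*(x)=\I\{q_{b,n}(x)>0\}$, I would write
\begin{equation*}
R_{2n} = \int \bigl(\pi_n^*(x)-\pi^*(x)\bigr)\, s_{b,0}(x)\,dP_0(x).
\end{equation*}
The integrand vanishes unless $q_{b,0}(x)$ and $q_{b,n}(x)$ have opposite signs; away from the $P_0$-null event $\{q_{b,0}(X)=0\}$ (null by Condition~\ref{cond:margin_ZY}), this forces $|q_{b,0}(x)|\le |q_{b,n}(x)-q_{b,0}(x)|\le \delta_n$, where $\delta_n:=\|q_{b,n}-q_{b,0}\|_{\infty,P_0}$. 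Hence
\begin{equation*}
|R_{2n}|\le \E_{P_0}[U_n],\qquad U_n := |s_{b,0}(X)|\,\I\{|q_{b,0}(X)|\le \delta_n\}.
\end{equation*}

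Next I would bound $\E_{P_0}[U_n]=\int_0^\infty P_0(U_n>u)\,du$ by taking the minimum of two tail bounds. Condition~\ref{cond:margin_Y} gives the trivial bound $P_0(U_n>u)\le P_0(|q_{b,0}(X)|\le \delta_n)\lesssim \delta_n^\gamma$. For $u>C_1\delta_n$, set $r:=u/(C_1\delta_n)>1$; since on $\{|q_{b,0}|\le \delta_n\}$ we have $C_1 r|q_{b,0}|\le u$, Condition~\ref{cond:margin_ZY} yields
\begin{equation*}
P_0(U_n>u)\le P_0\bigl(|s_{b,0}(X)|\ge C_1 r\,|q_{b,0}(X)|\bigr)\le r^{-\zeta}=(C_1\delta_n/u)^\zeta.
\end{equation*}
The two bounds cross at $u^\star\asymp \delta_n^{\,1-\gamma/\zeta}$; integrating the first on $[0,u^\star]$ and the second on $[u^\star,\infty)$ each gives a contribution of order $\delta_n^{\,1+\gamma(\zeta-1)/\zeta}$, so
\begin{equation*}
|R_{2n}|\lesssim \delta_n^{\,1+\gamma(\zeta-1)/\zeta}.
\end{equation*}

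To conclude, note that $\zeta>2$ implies $(\zeta-1)/\zeta>1/2$, so $1+\gamma(\zeta-1)/\zeta > 1+\gamma/2$ strictly. Condition~\ref{cond:4} gives $\delta_n^{1+\gamma/2}=o_{P_0}(n^{-1/2})$, or equivalently $\delta_n = o_{P_0}(n^{-1/(2+\gamma)})$; since the exponent of $\delta_n$ in our bound strictly exceeds $1+\gamma/2$, we obtain $\delta_n^{1+\gamma(\zeta-1)/\zeta}=o_{P_0}(n^{-1/2})$, as required.

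The main obstacle is achieving a sharp enough exponent of $\delta_n$. The direct domain-splitting argument used in the proof of Lemma~\ref{prop:path_diff} (partitioning by the ratio $|s_{b,0}|/|q_{b,0}|$ and optimizing the threshold) only yields $|R_{2n}|\lesssim \delta_n^{\zeta(1+\gamma)/(\zeta+1)}$, which when combined with Condition~\ref{cond:4} requires $\gamma\ge 2/(\zeta-1)$, a condition strictly stronger than the hypothesized $\gamma>1/\zeta$. The layer-cake refinement exploits that $|s_{b,0}|$ itself has a polynomial tail on $\{|q_{b,0}|\le \delta_n\}$, rather than simply treating this set as uniformly bounded, and this is precisely what makes the mild assumption $\zeta>2$ suffice for the $n^{-1/2}$ rate.
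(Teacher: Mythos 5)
Your proof is correct, and it takes a genuinely different route from the paper's. Both arguments start from the same reduction, namely $|R_{2n}|\le \E_{P_0}\bigl[|s_{b,0}(X)|\,\I\{0<|q_{b,0}(X)|\le \delta_n\}\bigr]$ with $\delta_n=\|q_{b,n}-q_{b,0}\|_{\infty,P_0}$, but they diverge afterward. The paper partitions $\X$ into annuli $A_u=\{x: C_1u|q_{b,0}(x)|\le |s_{b,0}(x)|<C_1(u+1)|q_{b,0}(x)|\}$, bounds $|s_{b,0}|\le C_1(u+1)\delta_n$ on each annulus intersected with the disagreement set, applies summation by parts to convert $\sum_u (u+1)\P^\infty(A_u)$ into $\sum_u \P^\infty(B_{3,u}'^{c})$, and then uses Cauchy--Schwarz to split each term as $\delta_n^{\gamma/2}u^{-\zeta/2}$; the condition $\zeta>2$ enters to make the series converge, and the final bound is $\delta_n^{1+\gamma/2}$, exactly matching Condition~\ref{cond:4}. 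Your layer-cake argument instead takes the pointwise minimum of two tail bounds for $P_0(U_n>u)$ --- one from each margin condition --- and integrates; this avoids the summation-by-parts bookkeeping and yields the strictly sharper exponent $1+\gamma(\zeta-1)/\zeta\ge 1+\gamma/2$, with $\zeta>2$ entering only through this exponent comparison (your argument would in fact tolerate $\zeta=2$, where the paper's series diverges). The multiplicative Cauchy--Schwarz split in the paper effectively spends half of each margin exponent, whereas your approach optimizes the crossover point $u^\star\asymp\delta_n^{1-\gamma/\zeta}$ and recovers the full strength of both conditions; the trade-off is that the paper's bound lands exactly on the rate demanded by Condition~\ref{cond:4}, so the extra sharpness does not change the conclusion here. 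Your side remark about the weaker exponent $\zeta(1+\gamma)/(\zeta+1)$ obtained by the cruder domain-splitting of Lemma~\ref{prop:path_diff} (requiring $\gamma\ge 2/(\zeta-1)$) also checks out.
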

\begin{proof}
We adopt the idea in proof of Theorem 8 of \cite{Alex16}. Let $B_{3,u}'=\{x\in\X:|s_{b,0}(x)|< C_1u|q_{b,0}(x)|\}$ and $A_u=\{x\in\X: C_1u|q_{b,0}(x)|\leq\left| s_{b,0}(x)\right|< C_1(u+1)\left|q_{b,0}(x)\right|\}$. Then for any $t>0$, 
\begin{align*}
|\Psi_{\pi_n^*}\left(P_{0}\right)-\Psi_{\pi^*}\left(P_{0}\right)| &=  \E_{P_0}\left[s_{b, 0}(X)(\pi_n^*(X)-\pi^*(X))\right] \\
&\le \E_{0}\left[\left|s_{b, 0}(X)\right| I\left(\pi^*(X) \neq \pi_n^*(X)\right)\right] \\
&= \sum_{u=0}^\infty \E_0[|s_{b,0}(X)|I(\pi^*(X)\ne \pi_n^*(X))I(A_u)]\\
&\leq \sum_{u=0}^\infty \E_0[|s_{b,0}(X)|I(|q_{b,0}(X)|\leq |q_{b,n}(X)-q_{b,0}(X)|)I(A_u)].
\end{align*}
where the last inequality follows from the fact that for any $x\in\X$, $\pi^*(x) \neq \pi_n^*(x)$ implies that $|q_{b,n}(x)-q_{b,0}(x)|\geq |q_{b,0}(x)|$. From Condition~\ref{cond:margin_ZY} we know that $q_{b,0}(X)\ne 0$ with $P_0$-probability 1, so 
\begin{align*}
    &\sum_{u=0}^\infty \E_0[|s_{b,0}(X)|I(|q_{b,0}(X)|\leq |q_{b,n}(X)-q_{b,0}(X)|)I(A_u)]\\
    &= \sum_{u=0}^\infty \E_0[|s_{b,0}(X)|I(0<|q_{b,0}(X)|\leq |q_{b,n}(X)-q_{b,0}(X)|)I(A_u)].
\end{align*}
For any $x\in A_u$, $|s_{b,0}(x)|\leq C_1(u+1)|q_{b,0}(x)|,$ so for each $u$, 
\begin{align*}
    &\E_{0}\left[\left|s_{b, 0}(X)\right| I\left(0<\left|q_{b, 0}(X)\right| \leq \left|q_{b, n}(X)-q_{b, 0}(X)\right|\right)I(A_u)\right]\\
    &\leq C_1\E_{0}\left[(u+1)\left|q_{b, 0}(X)\right| I\left(0<\left|q_{b, 0}(X)\right| \leq \left|q_{b, n}(X)-q_{b, 0}(X)\right|\right)I(A_u)\right]\\
    &\leq C_1\E_{0}\left[(u+1)\left|q_{b,n}(X)-q_{b, 0}(X)\right| I\left(0<\left|q_{b, 0}(X)\right| \leq \left|q_{b, n}(X)-q_{b, 0}(X)\right|\right)I(A_u)\right]\\
    &\leq C_1\E_{0}\left[(u+1)\max_{x\in\X}\left\|q_{b, n}(x)-q_{b, 0}(x)\right\| I\left(0<\left|q_{b, 0}(X)\right| \leq \max_{x\in\X}\left\|q_{b, n}(x)-q_{b, 0}(x)\right\|\right)I(A_u)\right]\\
    &= C_1(u+1)\left\|q_{b, n}-q_{b, 0}\right\|_{\infty,P_0}\E_{0}\left[I\left(0<\left|q_{b, 0}(X)\right| \leq \max_{x\in\X}\left\|q_{b, n}(x)-q_{b, 0}(x)\right\|\right)I(A_u)\right]\\
    &= C_1(u+1)\left\|q_{b, n}-q_{b, 0}\right\|_{\infty,P_0}P_0(0<\left|q_{b, 0}(X)\right| \leq \left\|q_{b, n}-q_{b, 0}\right\|_{\infty,P_0},A_u).
\end{align*}
For an event $\mathcal{E}\subseteq\mathcal{X}$, let $\P^\infty(\mathcal{E}):=P_0(0<\left|q_{b, 0}(X)\right| \leq \left\|q_{b, n}-q_{b, 0}\right\|_{\infty,P_0},\mathcal{E})$. Then, for any $k\in\mathbb{N}$, 

\begin{align*}
    &\sum_{u=0}^k \E_0[|s_{b,0}(X)|I(0<|q_{b,0}(X)|\leq |q_{b,n}(X)-q_{b,0}(X)|)I(A_u)]
    \\
    &\leq \sum_{u=0}^k C_1(u+1)\left\|q_{b, n}-q_{b, 0}\right\|_{\infty,P_0}\P^\infty(A_u) \\
    &= \sum_{u=0}^k C_1(u+1)\left\|q_{b, n}-q_{b, 0}\right\|_{\infty, P_{0}}[\P^\infty(B_{3,u+1}')-\P^\infty(B_{3,u}')] \\
    &= \sum_{u=0}^k C_1(u+1)\left\|q_{b, n}-q_{b, 0}\right\|_{\infty, P_{0}} \P^\infty(B_{3,u+1}') -\sum_{u=0}^k C_1(u+1)\left\|q_{b, n}-q_{b, 0}\right\|_{\infty, P_{0}}  \P^\infty(B_{3,u}') \\
    &= \sum_{u=1}^{k+1} C_1u\left\|q_{b, n}-q_{b, 0}\right\|_{\infty, P_{0}} \P^\infty(B_{3,u}') -\sum_{u=0}^k C_1(u+1)\left\|q_{b, n}-q_{b, 0}\right\|_{\infty, P_{0}} \P^\infty(B_{3,u}') \\
    &= C_1(k+1)\left\|q_{b, n}-q_{b, 0}\right\|_{\infty, P_{0}} \P^\infty(B_{3,k+1}')-\sum_{u=0}^k C_1\left\|q_{b, n}-q_{b, 0}\right\|_{\infty, P_{0}}\P^\infty(B_{3,u}') \\
    &= \sum_{u=0}^k C_1\left\|q_{b, n}-q_{b, 0}\right\|_{\infty, P_{0}}[\P^\infty(B_{3,k+1}')-\P^\infty(B_{3,u}')] \\
    &\le \sum_{u=0}^k C_1\left\|q_{b, n}-q_{b, 0}\right\|_{\infty, P_{0}}[\P^\infty(\X)-\P^\infty(B_{3,u}')] \\
    &= \sum_{u=0}^k C_1\left\|q_{b, n}-q_{b, 0}\right\|_{\infty, P_{0}}[\P^\infty(B_{3,u}'^c)] \\
    &= \sum_{u=0}^k C_1\left\|q_{b, n}-q_{b, 0}\right\|_{\infty, P_{0}}[\P_0(0<\left|q_{b, 0}(X)\right| \leq \left\|q_{b, n}-q_{b, 0}\right\|_{\infty,P_0}, B_{3,u}'^c)]\\
    &\leq \sum_{u=0}^k C_1\left\|q_{b, n}-q_{b, 0}\right\|_{\infty, P_{0}}^{1+\gamma/2}u^{-\zeta/2}.
\end{align*}
where the last step follows from Holder's inequality. Since $\zeta>2$, let $k\to\infty$ and the infinite sum converges. Therefore,
\begin{align*}
|\Psi_{\pi_n^*}\left(P_{0}\right)-\Psi_{\pi^*}\left(P_{0}\right)| &= \sum_{u=1}^\infty \E_0[|s_{b,0}(X)|I(\pi^*(X)\ne \pi_n^*(X))|A_u]\P(A_u)\\
&=\lim_{k\to\infty} \sum_{u=1}^k \E_0[|s_{b,0}(X)|I(\pi^*(X)\ne \pi_n^*(X))|A_u]\P(A_u)\lesssim \left\|q_{b, n}-q_{b, 0}\right\|_{p, P_{0}}^{1+\gamma/2}.
\end{align*}

Note that under Condition~\ref{cond:4}, we have $\|q_{b,n}-q_{b,0}\|_{\infty,P_0}^{1+\gamma/2}=o_{P_0}(n^{-1/2})$ for any $\gamma>0$, so $|R_{2n}|=o_{P_0}(n^{-1/2})$. 
\end{proof}

\section{Proofs for Section~\ref{sec:general_inference}}\label{sec:technical_proofs}
For notational simplicity, throughout this section and later we denote $\psi_\pi:=\Psi_\pi(P_0)$ for some policy $\pi\in\Pi$. 
\begin{lemma}\label{prop:3.1}
If $\inf_{\pi\in\Pi} \sigma_\pi(P_0)>0$, and $\hat{\sigma}_\pi$ is a consistent estimator of $\sigma_\pi(P_0)$ for each $\pi\in\Pi$, an asymptotically valid uniform $\beta$-level confidence band is given by $\left\{\hat{\omega}_\pi \pm \frac{\hat{\sigma}_\pi t_{\beta}}{n^{1/2}} : \pi\in \Pi\right\}$. 
\end{lemma}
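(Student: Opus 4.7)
The plan is to reduce the coverage calculation for the band $\{\hat{\omega}_\pi \pm \hat{\sigma}_\pi t_\beta / n^{1/2}\}$ to a weak convergence statement for the standardized estimation-error process over $\Pi$, and then to invoke a symmetry argument for the centered Gaussian limit. First I would use the uniform asymptotic linearity of $\hat{\omega}_\pi$ from Condition~\ref{cond:asymp_linear_est} to write
$$n^{1/2}(\hat{\omega}_\pi - \Omega_\pi(P_0)) = \mathbb{G}_n D_\pi(P_0) + o_p(1) \qquad \text{uniformly in } \pi\in\Pi,$$
where $\mathbb{G}_n := n^{1/2}(P_n - P_0)$. Then, combining the uniform consistency $\sup_{\pi\in\Pi}|\hat{\sigma}_\pi - \sigma_\pi(P_0)| = o_p(1)$ from Condition~\ref{cond:non_vanish_stdev} with the lower bound $\inf_{\pi\in\Pi}\sigma_\pi(P_0) > 0$, I would divide through by $\hat{\sigma}_\pi$ and obtain, still uniformly in $\pi$,
$$\frac{n^{1/2}(\hat{\omega}_\pi - \Omega_\pi(P_0))}{\hat{\sigma}_\pi} = \mathbb{G}_n f_\pi + o_p(1), \qquad f_\pi := D_\pi(P_0)/\sigma_\pi(P_0)\in\mathcal{F}.$$

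Next I would establish that $\mathcal{F}$ is $P_0$-Donsker. Condition~\ref{cond:restrict_policy_class} supplies a bounded uniform entropy integral for $\Pi$ in $L^2(P_0)$, the Lipschitz bound $\|D_\pi - D_{\pi'}\|_{L^2(P_0)} \leq C_2\|\pi - \pi'\|_{L^2(P_0)}$ from Condition~\ref{cond:bounded_phi} propagates covers of $\Pi$ into covers of $\{D_\pi(P_0)\}$, and dividing by $\sigma_\pi$ — which is uniformly bounded away from $0$ and $\infty$ by Condition~\ref{cond:non_vanish_stdev} — preserves the entropy bound up to constants. A constant envelope then suffices since $\|f_\pi\|_{2,P_0} \equiv 1$, and so $\mathcal{F}$ is $P_0$-Donsker.

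With $\mathcal{F}$ Donsker, the continuous mapping theorem yields $\sup_{\pi\in\Pi}\mathbb{G}_n f_\pi \overset{d}{\to} \sup_{f\in\mathcal{F}}\mathbb{G} f$, and since the limit process $\mathbb{G}$ is mean-zero Gaussian, $-\mathbb{G}$ has the same law, so $-\inf_{\pi\in\Pi}\mathbb{G}_n f_\pi$ converges in distribution to the same limit $\sup_{f\in\mathcal{F}}\mathbb{G} f$. A union bound combined with the uniform $o_p(1)$ approximation above then gives
$$\limsup_n P\!\left(\sup_{\pi\in\Pi}\left|\frac{n^{1/2}(\hat{\omega}_\pi - \Omega_\pi(P_0))}{\hat{\sigma}_\pi}\right| > t_\beta\right) \leq 2\,P\!\left(\sup_{f\in\mathcal{F}}\mathbb{G} f > t_\beta\right) = \beta,$$
where the final equality uses the definition of $t_\beta$ as the $1-\beta/2$ quantile. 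This is exactly the statement that the band has asymptotic uniform coverage $1-\beta$.

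The main obstacle I anticipate is the Donsker verification for $\mathcal{F}$: although the uniform entropy bound on $\Pi$ and the Lipschitz hypothesis on $\pi\mapsto D_\pi(P_0)$ are tailor-made for this transfer, care is needed in propagating the bound through the standardization $f_\pi = D_\pi(P_0)/\sigma_\pi(P_0)$, because $\sigma_\pi$ itself varies with $\pi$, so one must combine an additive decomposition with the Lipschitz constant of $\pi\mapsto 1/\sigma_\pi$ (controlled by $C_2$ and $\inf_\pi \sigma_\pi(P_0) > 0$) to conclude that $\pi\mapsto f_\pi$ is $L^2(P_0)$-Lipschitz. All other steps are routine empirical-process manipulations.
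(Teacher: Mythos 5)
Your proposal is correct and follows essentially the same route as the paper: standardize the estimation error, invoke the Donsker property of $\mathcal{F}=\{D_\pi(P_0)/\sigma_\pi(P_0)\}$ (which the paper isolates in Lemma~\ref{lem:F_P_Donsk}) together with Slutsky to get weak convergence of the process $n^{1/2}(\hat{\omega}_\pi-\omega_\pi)/\hat{\sigma}_\pi$, and then use the definition of $t_\beta$ as the $1-\beta/2$ quantile of $\sup_{f\in\mathcal{F}}\mathbb{G}f$. The only difference is presentational: you make explicit the symmetry-of-$\mathbb{G}$ and union-bound step that the paper compresses into its final ``$=1-\beta$'' line, and you correctly note that uniform (not merely pointwise) consistency of $\hat{\sigma}_\pi$ is what is actually needed.
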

\begin{proof}[Proof of Lemma~\ref{prop:3.1}]
To see that this is the case, note that $t_{\beta}$ is the $1-\beta/2$ quantile of $\sup_{f\in\mathcal{F}} \mathbb{G} f$, and also
\begin{align*}
    P&\cap_{\pi\in\Pi}\left\{\hat{\omega}_\pi - \frac{\hat{\sigma}_\pi t_{\beta}}{n^{1/2}}\le \omega_\pi \le\hat{\omega}_\pi + \frac{\hat{\sigma}_\pi t_{\beta}}{n^{1/2}}\right\} \\
    &= P\cap_{\pi\in\Pi}\left\{-t_\beta\le n^{1/2}\frac{\hat{\omega}_\pi-\omega_\pi}{\hat{\sigma}_\pi} \le t_{\beta}\right\} \\
    &\rightarrow P\cap_{\pi\in\Pi}\left\{-t_\beta\le \mathbb{G}f \le t_{\beta}\right\} \\
    &= P\cap_{\pi\in\Pi}\left[\left\{-t_\beta\le \inf_{f\in\mathcal{F}}\mathbb{G}f\right\}\cap \left\{\sup_{f\in\mathcal{F}}\mathbb{G}f \le t_{\beta}\right\}\right] \\
    &= 1-\beta,
\end{align*}
where the convergence follows from the fact that $n^{1/2}\frac{\hat{\omega}_\pi-\omega_\pi}{\hat{\sigma}_\pi}\rightsquigarrow \GG f$ by Lemma \ref{lem:F_P_Donsk} and Slutsky's Theorem. 
\end{proof}

\begin{proof}[Proof of Lemma~\ref{lem:Pi1b}]
We have that
\[\left\{\Pi^*\subseteq \hat{\Pi}_\beta\right\}=\left\{\omega_{\pi'}<\sup_{\pi\in\Pi}\omega_\pi,\forall \pi'\in \hat{\Pi}_\beta^C\right\}.\]
Therefore, 
\begin{align}
    &\left\{\Pi^*\subseteq \hat{\Pi}_\beta\right\}^C\nonumber\\
    &= \left\{\exists \pi'\in \hat{\Pi}_\beta^C:  \omega_{\pi'}=\sup_{\pi\in\Pi}\omega_\pi\right\}\nonumber\\
    &\subseteq \left\{\exists\pi'\in\hat{\Pi}_\beta^C: \left[\omega_{\pi'}- \hat{\omega}_{\pi'} - \frac{\hat{\sigma}_{\pi'} t_{\beta}}{n^{1/2}} + L_n\right] > \sup_{\pi\in \Pi} \omega_\pi,  \right\} \nonumber \\
    &= \left\{\exists\pi'\in\hat{\Pi}_\beta^C: \left[\omega_{\pi'} - \hat{\omega}_{\pi'} - \frac{\hat{\sigma}_{\pi'} t_{\beta}}{n^{1/2}}\right] > \sup_{\pi\in \Pi} \omega_\pi - L_n \right\}, \label{eq:Pi1_bC}
\end{align}
where the inclusion follows from the 
definition of $\hat{\Pi}_\beta$. Let $\mathcal{A}$ denote the event $\{L_n\le \sup_{\pi\in \Pi}\omega_\pi \}\cap\left[\cap_{\pi\in\Pi}\left\{\omega_\pi \leq\hat{\omega}_\pi + \frac{\hat{\sigma}_\pi t_{\beta}}{n^{1/2}}\right\}\right]$. Hence, \eqref{eq:Pi1_bC} shows that
\begin{align*}
    &\left\{\Pi^*\not\subseteq \hat{\Pi}_\beta\right\}^C \\
    &\subseteq \left[\left\{\exists\pi'\in\hat{\Pi}_\beta^C: \left[\omega_{\pi'} - \hat{\omega}_{\pi'} - \frac{\hat{\sigma}_{\pi'} t_{\beta}}{n^{1/2}}\right] > \sup_{\pi\in \Pi} \omega_\pi - L_n \right\}\cap\mathcal{A}\right]\cup\mathcal{A}^C \\
    &\subseteq \left[\left\{\exists\pi'\in\hat{\Pi}_\beta^C: \omega_{\pi'}- \hat{\omega}_{\pi'} - \frac{\hat{\sigma}_{\pi'} t_{\beta}}{n^{1/2}} > 0\right\}\cap\mathcal{A}\right]\cup\mathcal{A}^C \\
    &= \mathcal{A}^C,
\end{align*}
where the final equality used that the leading event in the union above is equal to the null set since under $\A$, we have $\omega_{\pi'}- \hat{\omega}_{\pi'} - \frac{\hat{\sigma}_{\pi'} t_{\beta}}{n^{1/2}} \leq 0$ for each $\pi\in\Pi$. 
Also, note that by Lemma~\ref{prop:3.1}, $\Pr\left(\cap_{\pi\in\Pi}\left\{\omega_\pi \le\hat{\omega}_\pi + \frac{\hat{\sigma}_\pi t_{\beta}}{n^{1/2}}\right\}\right)\to 1-\beta/2$, and by definition of $L_n$, $\limsup_n\Pr\bigsmile{\left\{L_n< \sup_{\pi\in \Pi} \omega_\pi\right\}}\geq 1-\beta/2$. Hence, by a union bound,
\begin{align*}
    \limsup_n P\left\{\Pi^*\not\subseteq \hat{\Pi}_\beta\right\}&\le \beta.
\end{align*}
\end{proof}

\begin{lemma}\label{lem:PiHat1_b}
For any $\beta>0$, $\liminf_{n\to\infty}\P\left(\omega_{\pi^*}-\inf_{\pi\in\hat{\Pi}_\beta}\omega_\pi\leq \frac{4t_{\beta}}{n^{1/2}}\sup_{\pi\in\Pi}\hat{\sigma}_\pi\right)\geq 1-\beta$. 
\end{lemma}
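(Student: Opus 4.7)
The plan is to reduce the entire argument to a single high-probability event, namely the one on which the two-sided uniform confidence band from Lemma~\ref{prop:3.1} is valid. The lemma as stated is silent about the precise form of $L_n$, but in the setting of Lemma~\ref{lem:conv_rate_CI} the choice $L_n = \sup_{\pi\in\Pi}[\hat{\omega}_\pi - \hat{\sigma}_\pi t_\beta/n^{1/2}]$ is in force, and I would adopt this specification since it is what makes an $O(n^{-1/2})$ gap possible without invoking any new probabilistic argument. Let $\mathcal{E}_n$ denote the event $\cap_{\pi\in\Pi}\{|\hat{\omega}_\pi - \omega_\pi| \leq \hat{\sigma}_\pi t_\beta/n^{1/2}\}$; by Lemma~\ref{prop:3.1}, $\liminf_n \P(\mathcal{E}_n) \geq 1 - \beta$, so it suffices to show that the event in Lemma~\ref{lem:PiHat1_b} contains $\mathcal{E}_n$.

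Working on $\mathcal{E}_n$, I would carry out two short steps. First, for any $\pi \in \hat{\Pi}_\beta$, the defining inequality from \eqref{eqn:Pi1-beta} gives $L_n \leq \hat{\omega}_\pi + \hat{\sigma}_\pi t_\beta/n^{1/2}$; substituting $\hat{\omega}_\pi \leq \omega_\pi + \hat{\sigma}_\pi t_\beta/n^{1/2}$ from $\mathcal{E}_n$ and rearranging yields $\omega_\pi \geq L_n - 2\hat{\sigma}_\pi t_\beta/n^{1/2}$. Second, the chosen form of $L_n$ and the lower half of the band at $\pi^*$ give
$$L_n \;\geq\; \hat{\omega}_{\pi^*} - \hat{\sigma}_{\pi^*} t_\beta/n^{1/2} \;\geq\; \omega_{\pi^*} - 2\hat{\sigma}_{\pi^*} t_\beta/n^{1/2}.$$
Chaining these two displays and bounding each $\hat{\sigma}$ by $\sup_{\pi'\in\Pi}\hat{\sigma}_{\pi'}$ produces $\omega_\pi \geq \omega_{\pi^*} - 4 t_\beta \sup_{\pi'\in\Pi}\hat{\sigma}_{\pi'} / n^{1/2}$ uniformly in $\pi \in \hat{\Pi}_\beta$. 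Taking the infimum over $\hat{\Pi}_\beta$ (with the convention $\inf\emptyset = +\infty$ handling the degenerate case trivially) gives the inequality in the lemma on $\mathcal{E}_n$, and the conclusion follows from the coverage guarantee for $\mathcal{E}_n$.

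The only real subtlety I would have to address is the first one: the statement of Lemma~\ref{lem:PiHat1_b} does not restate the assumption on $L_n$, and a generic asymptotically valid $(1-\beta/2)$-lower bound for $\sup_{\pi\in\Pi}\omega_\pi$ need not approximate $\omega_{\pi^*}$ at rate $n^{-1/2}$. Reading the lemma in the context in which it is subsequently used, however, the relevant $L_n$ is precisely the upper endpoint of the uniform lower confidence band, and with this choice the bound in Step two becomes a direct consequence of the same event $\mathcal{E}_n$ already invoked in Step one, so no additional probabilistic work is required.
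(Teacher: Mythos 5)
Your proof is correct and takes essentially the same route as the paper's: both arguments hinge on the specific choice $L_n=\sup_{\pi\in\Pi}\left[\hat{\omega}_\pi-\hat{\sigma}_\pi t_\beta/n^{1/2}\right]$ (which the paper also uses implicitly, via its opening display $\inf_{\pi\in\hat{\Pi}_\beta}[\hat{\omega}_\pi+\hat{\sigma}_\pi t_\beta/n^{1/2}]\geq \sup_{\pi\in\Pi}[\hat{\omega}_\pi-\hat{\sigma}_\pi t_\beta/n^{1/2}]$, and makes explicit in Lemma~\ref{lem:conv_rate_CI}) together with the uniform two-sided band of Lemma~\ref{prop:3.1}, splitting the gap $\omega_{\pi^*}-\inf_{\pi\in\hat{\Pi}_\beta}\omega_\pi$ into a piece controlled by the lower band at $\pi^*$ and a piece controlled by the upper band over $\hat{\Pi}_\beta$. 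The only cosmetic difference is that you work on the single two-sided coverage event of probability tending to $1-\beta$, whereas the paper bounds the two one-sided failure events by $\beta/2$ each and applies a union bound; the conclusion and the constants are identical.
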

\begin{proof}[Proof of Lemma~\ref{lem:PiHat1_b}]
Note that by the definition of $\hat{\Pi}_\beta$, we have \[\inf_{\pi\in\hat{\Pi}_\beta}\left[\hat{\omega}_\pi+\frac{\hat{\sigma}_\pi t_{\beta}}{n^{1/2}}\right]\geq \sup_{\pi\in\Pi}\left[\hat{\omega}_\pi-\frac{\hat{\sigma}_\pi t_{\beta}}{n^{1/2}}\right],\] so 
\begin{align*}
    \omega_{\pi^*}-\inf_{\pi\in\hat{\Pi}_\beta}\omega_\pi
    &\leq \omega_{\pi^*}-\sup_{\pi\in\Pi}\left[\hat{\omega}_\pi-\frac{\hat{\sigma}_\pi t_{\beta}}{n^{1/2}}\right]+\inf_{\pi\in\hat{\Pi}_\beta}\left[\hat{\omega}_\pi+\frac{\hat{\sigma}_\pi t_{\beta}}{n^{1/2}}\right]-\inf_{\pi\in\hat{\Pi}_\beta}\omega_\pi.
\end{align*}
Hence,
\begin{align}
    &\left\{\omega_{\pi^*}-\inf_{\pi\in\hat{\Pi}_\beta}\omega_\pi > \frac{4t_{\beta}}{n^{1/2}}\sup_{\pi\in\Pi}\hat{\sigma}_\pi\right\} \nonumber \\
    &\subseteq \left\{\omega_{\pi^*}-\sup_{\pi\in\Pi}\left\{\hat{\omega}_\pi-\frac{\hat{\sigma}_\pi t_{\beta}}{n^{1/2}}\right\}+\inf_{\pi\in\hat{\Pi}_\beta}\left\{\hat{\omega}_\pi+\frac{\hat{\sigma}_\pi t_{\beta}}{n^{1/2}}\right\}-\inf_{\pi\in\hat{\Pi}_\beta}\omega_\pi >\frac{4t_{\beta}}{n^{1/2}}\sup_{\pi\in\Pi}\hat{\sigma}_\pi \right\}  \nonumber \\
    &\subseteq\left\{\omega_{\pi^*}>\sup_{\pi\in\Pi}\left\{\hat{\omega}_\pi - \frac{\hat{\sigma}_\pi t_{\beta}}{n^{1/2}}\right\}+2\sup_{\pi\in\Pi}\frac{\hat{\sigma}_\pi t_{\beta}}{n^{1/2}}\right\} \nonumber \\
    &\qquad\cup \left\{\inf_{\pi\in\hat{\Pi}_\beta}\omega_\pi< \inf_{\pi\in\hat{\Pi}_\beta}\left\{\hat{\omega}_\pi + \frac{\hat{\sigma}_\pi t_{\beta}}{n^{1/2}}\right\}-2\sup_{\pi\in\Pi}\frac{\hat{\sigma}_\pi t_{\beta}}{n^{1/2}}\right\}.  \label{eq:unionBdPiha}
\end{align}
In the remainder of this proof, we will show that the two events on the right-hand side each occur with probability no more than $\beta/2$. The result then follows by a union bound. 
Note that 
\begin{align*}
    \bigcap_{\pi\in\Pi}\left\{\omega_\pi \le\hat{\omega}_\pi + \frac{\hat{\sigma}_\pi t_{\beta}}{n^{1/2}}\right\}&\subseteq \left\{\omega_{\pi^*}\le\sup_{\pi\in\Pi}\left\{\hat{\omega}_\pi + \frac{\hat{\sigma}_\pi t_{\beta}}{n^{1/2}}\right\}\right\}\\
    &\subseteq \left\{\omega_{\pi^*}\le\sup_{\pi\in\Pi}\left\{\hat{\omega}_\pi - \frac{\hat{\sigma}_\pi t_{\beta}}{n^{1/2}}\right\}+2\sup_{\pi\in\Pi}\frac{\hat{\sigma}_\pi t_{\beta}}{n^{1/2}}\right\},
\end{align*}
where the latter inclusion holds because $\sup[f+g]\le \sup f + \sup g$. So
\begin{align*}
    &\liminf_{n\to\infty}\P\left(\omega_{\pi^*}-\sup_{\pi\in\Pi}\left\{\hat{\omega}_\pi-\frac{\hat{\sigma}_\pi t_{\beta}}{n^{1/2}}\right\}\leq 2\sup_{\pi\in\Pi}\frac{\hat{\sigma}_\pi t_{\beta}}{n^{1/2}}\right)\\
    &\geq \liminf_{n\to\infty}\P\left(\bigcap_{\pi\in\Pi}\left\{\omega_\pi \le\hat{\omega}_\pi + \frac{\hat{\sigma}_\pi t_{\beta}}{n^{1/2}}\right\}\right)\geq 1-\frac{\beta}{2},
\end{align*}
where the last step follows from Lemma~\ref{prop:3.1}. Hence, the first event on the right-hand side of \eqref{eq:unionBdPiha} occurs with probability no more than probability $\beta/2$. 
We also have that
\begin{align*}
    &\bigcap_{\pi\in\Pi}\left\{\omega_\pi \ge\hat{\omega}_\pi - \frac{\hat{\sigma}_\pi t_{\beta}}{n^{1/2}}\right\}\subseteq\bigcap_{\pi\in\hat{\Pi}_\beta}\left\{\omega_\pi \ge\hat{\omega}_\pi - \frac{\hat{\sigma}_\pi t_{\beta}}{n^{1/2}}\right\}\\
    &\subseteq \left\{\inf_{\pi\in\hat{\Pi}_\beta}\omega_\pi\ge\inf_{\pi\in\hat{\Pi}_\beta}\left\{\hat{\omega}_\pi - \frac{\hat{\sigma}_\pi t_{\beta}}{n^{1/2}}\right\}\right\}\\
    &\subseteq \left\{\inf_{\pi\in\hat{\Pi}_\beta}\omega_\pi\ge\inf_{\pi\in\hat{\Pi}_\beta}\left\{\hat{\omega}_\pi + \frac{\hat{\sigma}_\pi t_{\beta}}{n^{1/2}}\right\}-2\sup_{\pi\in\hat{\Pi}_\beta}\frac{\hat{\sigma}_\pi t_{\beta}}{n^{1/2}}\right\},
\end{align*}
since $\inf[f-g]\ge \inf f-\sup g$. 
So
\begin{align*}
    &\liminf_{n\to\infty}\P\left(\inf_{\pi\in\hat{\Pi}_\beta}\left\{\hat{\omega}_\pi+\frac{\hat{\sigma}_\pi t_{\beta}}{n^{1/2}}\right\}- 2\sup_{\pi\in\hat{\Pi}_\beta}\frac{\hat{\sigma}_\pi t_{\beta}}{n^{1/2}}\leq \inf_{\pi\in\hat{\Pi}_\beta}\omega_\pi\right)\\
    &\geq \liminf_{n\to\infty}\P\left(\bigcap_{\pi\in\Pi}\left\{\hat{\omega}_\pi - \frac{\hat{\sigma}_\pi t_{\beta}}{n^{1/2}}\le \omega_\pi\right\}\right)\geq 1-\frac{\beta}{2},
\end{align*}
where the last step follows from Lemma~\ref{prop:3.1}. Hence, the second event on the right-hand side of \eqref{eq:unionBdPiha} occurs with probability no more than probability $\beta/2$. 
\end{proof}

In the following lemma, for some subset $\G$ of a space $L^2(Q)$, define the covering number $N(\epsilon,\G,L^2(Q))$ to be the minimal cardinality of an $\epsilon$-cover of $\G$ with respect to the $L^2(Q)$ metric \cite{van1996weak}. Before stating the lemma, we recall that $\F:=\{D_\pi(P_0)/\sigma_\pi(P_0):\pi\in\Pi\}$.
\begin{lemma}[$\F$ is $P_0$-Donsker]\label{lem:F_P_Donsk}
Assume that Conditions~\ref{cond:restrict_policy_class} and \ref{cond:non_vanish_stdev} hold and also that
\begin{enumerate}[label=(\roman*),ref=(\roman*)]
    \item\label{it:1} $\Pi$ satisfies the uniform entropy bound, that is, 
    $\int_0^{\infty} \sup_{Q_X} \sqrt{\log N\left(\varepsilon, \Pi, L^2(Q_X)\right)} d \varepsilon<\infty$, where the supremum is over all finitely supported measures on $\mathcal{X}$;
    \item\label{it:2} there exists $L>0$ such that, for all finitely supported distributions $Q$ of $(X,A,Y)$ with support on $\X\times\{0,1\}\times\Y$, the gradient map $\pi\mapsto D_\pi$ is $L$-Lipschitz, in the sense that, for any $\pi,\pi'\in\Pi$, $\|D_\pi-D_{\pi'}\|_{L^2(Q)}\le L\|\pi-\pi'\|_{L^2(Q_{X})}$, where $Q_X$ is the marginal distribution of $X$ under $Q$;
    \item\label{it:3} $\sup_{\pi\in\Pi}\operatorname{ess}\sup_{x\in\X,a\in\{0,1\},y\in\Y} |D_{\pi}(P_0)(x,a,y)|<\infty$.
\end{enumerate}
Then, the set $\F:=\{D_\pi(P_0)/\sigma_\pi(P_0):\pi\in\Pi\}$ is $P_0$-Donsker.     
\end{lemma}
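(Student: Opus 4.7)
The plan is to verify that $\F$ has a square-integrable envelope together with a finite uniform entropy integral, which jointly yield the $P_0$-Donsker property via a standard result (e.g., Theorem~2.5.2 of van der Vaart and Wellner, 1996). A constant envelope is immediate: \ref{it:3} gives $M := \sup_\pi \|D_\pi(P_0)\|_\infty < \infty$ and Condition~\ref{cond:non_vanish_stdev} gives $c := \inf_\pi \sigma_\pi(P_0) > 0$, so $F \equiv M/c$ envelopes $\F$ and lies in $L^2(P_0)$. The substance of the proof is therefore the uniform covering bound for $\F$.

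Writing $g_\pi := D_\pi(P_0)/\sigma_\pi(P_0)$, I would begin with the decomposition
\begin{align*}
g_\pi - g_{\pi'} = \sigma_\pi^{-1}(D_\pi - D_{\pi'}) + D_{\pi'}\,\frac{\sigma_{\pi'} - \sigma_\pi}{\sigma_\pi\sigma_{\pi'}}.
\end{align*}
Using $\sigma_\pi \geq c$, $\|D_{\pi'}\|_\infty \leq M$, and the triangle inequality, for any probability measure $Q$ on $\X\times\{0,1\}\times\Y$,
\begin{align*}
\|g_\pi - g_{\pi'}\|_{L^2(Q)} \leq c^{-1}\|D_\pi - D_{\pi'}\|_{L^2(Q)} + (M/c^2)\,|\sigma_\pi - \sigma_{\pi'}|.
\end{align*}
Hypothesis \ref{it:2} bounds the first summand by $(L/c)\|\pi - \pi'\|_{L^2(Q_X)}$. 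Since $\sigma_\pi = \|D_\pi(P_0)\|_{L^2(P_0)}$, the reverse triangle inequality combined with \ref{it:2} applied to $P_0$ bounds $|\sigma_\pi - \sigma_{\pi'}| \leq L\|\pi - \pi'\|_{L^2(P_0)}$, and hence the second summand by $(ML/c^2)\|\pi - \pi'\|_{L^2(P_0)}$.

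Consequently, the product of an $(\epsilon c/(2L))$-net of $\Pi$ in $L^2(Q_X)$ with an $(\epsilon c^2/(2ML))$-net of $\Pi$ in $L^2(P_0)$ produces an $\epsilon$-net of $\F$ in $L^2(Q)$, and thus
\begin{align*}
\log N(\epsilon, \F, L^2(Q)) \lesssim \log N(c_1\epsilon, \Pi, L^2(Q_X)) + \log N(c_2\epsilon, \Pi, L^2(P_0))
\end{align*}
for some constants $c_1, c_2 > 0$. Both summands are dominated by $\sup_{Q'}\log N(c_0\epsilon, \Pi, L^2(Q'))$ with $c_0 := \min\{c_1,c_2\}$ and the supremum over finitely supported $Q'$ --- directly in the first case, and via approximation of $P_0$ by discretizations in the second (legitimate because $\Pi$ consists of uniformly bounded measurable functions). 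Since $\F$ has constant envelope $M/c$, its covering number is trivially $1$ for $\epsilon \geq M/c$, so integrating in $\epsilon$ and invoking \ref{it:1} shows the uniform entropy integral of $\F$ is finite, completing the Donsker verification. The main obstacle I anticipate is the metric mismatch between the $L^2(Q_X)$ control produced by Lipschitzness of $D_\pi$ and the $L^2(P_0)$ control produced by variability of the scalar $\sigma_\pi$; the double-cover argument resolves this at the cost of an additive doubling of the entropy, which remains integrable under \ref{it:1}.
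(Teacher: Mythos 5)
Your proof is correct and follows the same overall strategy as the paper's: exhibit the constant envelope $M/c$ using \ref{it:3} and Condition~\ref{cond:non_vanish_stdev}, transfer the uniform entropy bound from $\Pi$ to $\F$ via the Lipschitz property \ref{it:2}, and conclude with Theorem~2.5.2 of van der Vaart and Wellner. The one substantive difference is in your favor: the paper simply asserts $N(C\varepsilon,\F,L^2(Q))\le N(C\varepsilon/L,\Pi,L^2(Q_X))$ ``by \ref{it:2} and properties of covering numbers,'' silently ignoring that the normalization $\sigma_\pi(P_0)$ varies with $\pi$, whereas your decomposition $g_\pi-g_{\pi'}=\sigma_\pi^{-1}(D_\pi-D_{\pi'})+D_{\pi'}(\sigma_{\pi'}-\sigma_\pi)/(\sigma_\pi\sigma_{\pi'})$ together with the reverse triangle inequality and the double-cover argument handles this explicitly, at the harmless cost of an additive doubling of the entropy. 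The only points you should make fully precise are (i) the extension of the Lipschitz bound in \ref{it:2} from finitely supported $Q$ to $P_0$ itself (needed for $|\sigma_\pi-\sigma_{\pi'}|\le L\|\pi-\pi'\|_{L^2(P_0)}$), which you correctly flag as a discretization argument, and (ii) the standard measurability hypotheses of Theorem~2.5.2, which both you and the paper leave implicit.
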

\begin{proof}[Proof of Lemma~\ref{lem:F_P_Donsk}]
We would like to use Theorem 2.5.2 of \cite{van1996weak}. First, by \ref{it:3} and Condition \ref{cond:non_vanish_stdev}, $$C:=\frac{\sup_{\pi\in\Pi}\operatorname{ess}\sup_{x\in\X,a\in\{0,1\},y\in\Y} |D_{\pi}(P_0)(x,a,y)|}{\inf_{\pi\in\Pi}\sigma_\pi(P_0)}<\infty.$$
Hence, an envelope function for $\F$ is given by the constant function $F(x,a,y)=C$. By \ref{it:2} and properties of covering numbers, for any $Q$ as stated in \ref{it:2} and implied marginal distribution $Q_X$, we have that $N\left(C\varepsilon , \mathcal{F}, L^2(Q)\right)\leq N\left(C\varepsilon/L, \Pi, L^2(Q_X)\right)$. Combining this with \ref{it:1} shows that $\F$ satisfies the uniform entropy bound in the sense that $\int_0^{\infty} \sup_{Q} \sqrt{\log N\left(\varepsilon, \F, L^2(Q)\right)} d \varepsilon<\infty$, where the supremum is over all finitely supported measures on $\X\times\{0,1\}\times \Y$. Hence, $\F$ is $P_0$-Donsker by Theorem 2.5.2 of \cite{van1996weak}.
\end{proof}

\begin{lemma}\label{lem:closed_Pi}
    $\Pi^*$ is a closed subset of $L^2(P_0)$. 
\end{lemma}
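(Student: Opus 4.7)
The plan is to use a direct sequential argument: take an $L^2(P_0)$-convergent sequence from $\Pi^*$ and show its limit remains in $\Pi^*$. Concretely, suppose $(\pi_k)_{k=1}^\infty \subseteq \Pi^*$ converges to some measurable $\pi : \mathcal{X} \to \{0,1\}$ in $L^2(P_0)$. First I would invoke Condition~\ref{cond:restrict_policy_class}(2), which states that $\Pi$ is closed in $L^2(P_0)$, to conclude that the limit $\pi$ lies in $\Pi$. This is the cheapest part of the argument and is explicitly assumed.

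The second step is to pass the optimality property through the limit. By the definition of $\Pi^*$, we have $\Omega_{\pi_k}(P_0) = \sup_{\pi' \in \Pi} \Omega_{\pi'}(P_0) =: \omega^*$ for every $k$, so the sequence $(\Omega_{\pi_k}(P_0))_k$ is identically equal to $\omega^*$. If the map $\pi \mapsto \Omega_\pi(P_0)$ is continuous with respect to the $L^2(P_0)$ topology on $\Pi$, then $\Omega_{\pi_k}(P_0) \to \Omega_\pi(P_0)$, which forces $\Omega_\pi(P_0) = \omega^*$ and hence $\pi \in \Pi^*$.

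The main obstacle is therefore justifying the $L^2(P_0)$-continuity of $\pi \mapsto \Omega_\pi(P_0)$. Condition~\ref{cond:bounded_phi} provides exactly this continuity for $\Psi$ but not explicitly for $\Omega$; however, the symmetric conditions on $(\Omega_\pi)_{\pi \in \Pi}$ used throughout Section~\ref{sec:general_inference} (uniform asymptotic linearity in Condition~\ref{cond:asymp_linear_est} together with the Lipschitz-in-$\pi$ structure of the canonical gradients implicit in Condition~\ref{cond:bounded_phi}) furnish the same property for $\Omega$. In particular, one can derive continuity of $\pi \mapsto \Omega_\pi(P_0)$ either by directly imposing a Lipschitz bound on the canonical gradients of $\Omega$ analogous to that for $\Psi$ in Condition~\ref{cond:bounded_phi}, or by noting that pathwise differentiability of $\Omega_\pi$ together with boundedness of the gradient class gives $|\Omega_{\pi}(P_0) - \Omega_{\pi'}(P_0)| \lesssim \|\pi - \pi'\|_{L^2(P_0)}$. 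Given this, the closedness of $\Pi^*$ follows immediately from the two steps above.

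Finally, I would record that the argument uses only (a) the $L^2(P_0)$-closedness of $\Pi$ from Condition~\ref{cond:restrict_policy_class}(2) and (b) $L^2(P_0)$-continuity of $\Omega$, so the conclusion is robust to whether $\Pi^*$ is a singleton or not. This matches the way the lemma is subsequently used, namely to extract policies $\pi^\ell, \pi^u \in \Pi^*$ attaining the extremal $\Psi$-values by combining closedness of $\Pi^*$ with continuity of $\pi \mapsto \Psi_\pi(P_0)$ and compactness arguments in $\Pi$.
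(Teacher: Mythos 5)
Your proposal is correct and takes essentially the same route as the paper: a sequential argument using the $L^2(P_0)$-closedness of $\Pi$ from Condition~\ref{cond:restrict_policy_class} together with continuity of $\pi\mapsto\Omega_\pi(P_0)$ to pass the optimality of $\omega_{\pi_k}$ to the limit. If anything, you are more careful than the paper, which simply asserts the $L^2(P_0)$-continuity of the $\Omega$-value map without tying it to a stated condition, whereas you correctly identify that this is the one step needing justification (e.g., via a Lipschitz bound on the canonical gradients of $\Omega$ analogous to the one Condition~\ref{cond:bounded_phi} imposes).
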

\begin{proof}
    Let $(\pi_k)_{k=1}^\infty$ be a $\Pi^*$-valued sequence that converges to some $\pi^*$ in $L^2(P)$. Since $\pi\mapsto \omega_\pi$ is a continuous map from $\{0,1\}^\mathcal{X}$ to $\mathbb{R}$ when the domain is equipped with the $L^2(P)$-topology, $\omega_{\pi_k}\rightarrow\omega_{\pi^*}$. As $\pi_k\in\Pi^*$ for all $k$, $\omega_{\pi_k}=\sup_{\pi\in \Pi} \omega_{\pi}$ for all $k$. Hence, $\omega_{\pi^*}=\sup_{\pi\in \Pi} \omega_{\pi}$. As $\Pi$ is closed, this shows that $\pi^*\in \Pi^*$. Hence, $\Pi^*$ is a closed subset of $L^2(P)$.
\end{proof}

\begin{lemma}\label{lem:Pi_compact}
If $\Pi^*$ is closed in $L^2(P_0)$ and $\Pi^*$ is $P_0$-Donsker, $\Pi^*$ is compact. 
\end{lemma}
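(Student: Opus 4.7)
The plan is to deduce compactness from the two hypotheses by combining total boundedness (which is a consequence of the Donsker property) with completeness (which follows from closedness in the complete space $L^2(P_0)$). Recall that in a metric space, a subset is compact if and only if it is both complete and totally bounded, so establishing these two properties suffices.

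First I would invoke the standard characterization of $P_0$-Donsker classes (e.g., Theorem~2.1.3 of van der Vaart and Wellner): every $P_0$-Donsker class is totally bounded under the intrinsic semimetric $\rho_{P_0}(f,g) := \sqrt{\mathrm{Var}_{P_0}(f - g)}$. For a policy class $\Pi^* \subseteq \{0,1\}^{\mathcal{X}}$ consisting of uniformly bounded functions, this semimetric is equivalent to the $L^2(P_0)$ seminorm in the sense needed for our argument, since $\|\pi-\pi'\|_{L^2(P_0)}^2 = \mathrm{Var}_{P_0}(\pi-\pi') + (P_0\pi - P_0\pi')^2$ and both summands can be bounded by $\|\pi-\pi'\|_{L^1(P_0)}$, which for $\{0,1\}$-valued functions coincides with $\|\pi-\pi'\|_{L^2(P_0)}^2$. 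In particular, a $\rho_{P_0}$-cover of $\Pi^*$ can be converted into an $L^2(P_0)$-cover at a comparable radius, so $\Pi^*$ is totally bounded in $L^2(P_0)$.

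Next, since $L^2(P_0)$ is a Banach space and hence complete, any closed subset of $L^2(P_0)$ is itself complete as a metric space in its own right. By hypothesis $\Pi^*$ is closed in $L^2(P_0)$, so $\Pi^*$ inherits completeness from $L^2(P_0)$.

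Finally, I would combine these two facts: a metric space that is both complete and totally bounded is compact (this is a classical characterization of compactness for metric spaces). Applying this to $\Pi^*$ equipped with the $L^2(P_0)$ metric yields compactness, which is the desired conclusion. The main obstacle, if any, is a careful reconciliation of the semimetric under which the Donsker property guarantees total boundedness with the $L^2(P_0)$ metric appearing in the closedness hypothesis; as sketched above, for a class of $\{0,1\}$-valued functions this reconciliation is routine because of uniform boundedness.
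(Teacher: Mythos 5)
Your proof is correct and follows essentially the same route as the paper's: total boundedness from the Donsker property, completeness from closedness in the complete space $L^2(P_0)$, and the classical fact that complete plus totally bounded implies compact. The only difference is that you are more careful than the paper about reconciling the variance semimetric of the Donsker theory with the $L^2(P_0)$ metric (a point the paper glosses over); just note that passing from a $\rho_{P_0}$-cover to an $L^2(P_0)$-cover additionally requires partitioning on the means $P_0\pi$, which lie in the compact set $[0,1]$, since $\rho_{P_0}$ does not control the mean difference.
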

\begin{proof}[Proof of Lemma~\ref{lem:Pi_compact}]
Since $\Pi^*$ is $P_0$-Donsker following from $\Pi$ being $P_0$-Donsker, then $\Pi^*$ is totally bounded in $L^2(P_0)$. Also, since $L^2(P_0)$ is complete, $\Pi^*$ being closed implies that $\Pi^*$ is complete. And totally bounded and complete subsets of a metric space are compact, so $\Pi^*$ is compact. 
\end{proof}

\begin{proof}[Proof of Theorem~\ref{thm:PsiPi}]
We have that
\begin{align*}
    &\left\{[\inf_{\pi\in\Pi^*} \psi_\pi,\sup_{\pi\in\Pi^*} \psi_\pi]\not\subseteq {\rm CI}_n\right\} \\
    &= \left\{\inf_{\pi\in \Pi^*} \psi_\pi <\inf_{\pi\in\hat{\Pi}_\beta}\left[\hat{\psi}_\pi - \frac{\hat{\kappa}_\pi z_{\alpha,\beta}}{n^{1/2}}\right]\right\}\cup \left\{\sup_{\pi\in \Pi^*} \psi_\pi >\sup_{\pi\in\hat{\Pi}_\beta}\left[\hat{\psi}_\pi + \frac{\hat{\kappa}_\pi z_{\alpha,\beta}}{n^{1/2}}\right]\right\} \\
    &\subseteq \left\{\inf_{\pi\in \Pi^*} \psi_\pi <\inf_{\pi\in\hat{\Pi}_\beta}\left[\hat{\psi}_\pi - \frac{\hat{\kappa}_\pi z_{\alpha,\beta}}{n^{1/2}}\right],\Pi^*\subseteq \hat{\Pi}_\beta\right\} \\
    &\quad\cup \left\{\sup_{\pi\in \Pi^*} \psi_\pi >\sup_{\pi\in\hat{\Pi}_\beta}\left[\hat{\psi}_\pi + \frac{\hat{\kappa}_\pi z_{\alpha,\beta}}{n^{1/2}}\right],\Pi^*\subseteq \hat{\Pi}_\beta\right\}\cup \left\{\Pi^*\not\subseteq \hat{\Pi}_\beta\right\}.
\end{align*}
Hence, by a union bound and the fact that $\limsup_n (a_n + b_n + c_n)\le \limsup_n a_n + \limsup_n b_n + \limsup_n c_n$, we see that
\begin{align*}
    &\limsup_n P\left\{{\rm CI}_n\not \subseteq [\inf_{\pi\in\Pi^*} \psi_\pi,\sup_{\pi\in\Pi^*} \psi_\pi]\right\} \\
    &\le \limsup_n P\left\{\inf_{\pi\in \Pi^*} \psi_\pi <\inf_{\pi\in\hat{\Pi}_\beta}\left[\hat{\psi}_\pi - \frac{\hat{\kappa}_\pi z_{\alpha,\beta}}{n^{1/2}}\right],\Pi^*\subseteq \hat{\Pi}_\beta\right\} \\
    &\quad+\limsup_n P\left\{\sup_{\pi\in \Pi^*} \psi_\pi >\sup_{\pi\in\hat{\Pi}_\beta}\left[\hat{\psi}_\pi + \frac{\hat{\kappa}_\pi z_{\alpha,\beta}}{n^{1/2}}\right],\Pi^*\subseteq \hat{\Pi}_\beta\right\} \\
    &\quad+ \limsup_n P\left\{\Pi^*\not\subseteq \hat{\Pi}_\beta\right\}.
\end{align*}
The third term is upper bounded by $\beta$ by Lemma~\ref{lem:Pi1b}. In what follows we will show that the first term on the right-hand side is no more than $(\alpha-\beta)/2$. Similar arguments can be used to show that the second term is also no more than $(\alpha-\beta)/2$. By a union bound argument, the sum of three terms is upper bounded by $\alpha$, which completes the proof. 

We begin by noting that, for any $n\in\mathbb{N}$,
\begin{align*}
    &\left\{\inf_{\pi\in \Pi^*} \psi_\pi <\inf_{\pi\in\hat{\Pi}_\beta}\left[\hat{\psi}_\pi - \frac{\hat{\kappa}_\pi z_{\alpha,\beta}}{n^{1/2}}\right],\Pi^*\subseteq \hat{\Pi}_\beta\right\} \\
    &\subseteq \left\{\inf_{\pi\in \Pi^*} \psi_\pi <\inf_{\pi\in\Pi^*}\left[\hat{\psi}_\pi - \frac{\hat{\kappa}_\pi z_{\alpha,\beta}}{n^{1/2}}\right],\Pi^*\subseteq \hat{\Pi}_\beta\right\} \\
    &\subseteq \left\{\inf_{\pi\in \Pi^*} \psi_\pi <\inf_{\pi\in\Pi^*}\left[\hat{\psi}_\pi - \frac{\hat{\kappa}_\pi z_{\alpha,\beta}}{n^{1/2}}\right]\right\}.
\end{align*}
By Lemma~\ref{lem:Pi_compact} and $\pi\mapsto \psi_\pi$ is continuous, there exists a $\pi^\ell$ such that $\psi_{\pi^\ell}=\inf_{\pi\in \Pi^*} \psi_\pi$. Combining this with the above, we see that
\begin{align*}
    \left\{\inf_{\pi\in \Pi^*} \psi_\pi <\inf_{\pi\in\hat{\Pi}_\beta}\left[\hat{\psi}_\pi - \frac{\hat{\kappa}_\pi z_{\alpha,\beta}}{n^{1/2}}\right],\Pi^*\subseteq \hat{\Pi}_\beta\right\} &\subseteq \left\{\psi_{\pi^\ell} <\inf_{\pi\in\Pi^*}\left[\hat{\psi}_\pi - \frac{\hat{\kappa}_\pi z_{\alpha,\beta}}{n^{1/2}}\right]\right\} \\
    &\subseteq \left\{\psi_{\pi^\ell} <\hat{\psi}_{\pi^\ell} - \frac{\hat{\kappa}_{\pi^\ell} z_{\alpha,\beta}}{n^{1/2}}\right\}.
\end{align*}
Then 
\begin{align*}
    P\left(\psi_{\pi^\ell} <\hat{\psi}_{\pi^\ell} - \frac{\hat{\kappa}_{\pi^\ell} z_{\alpha,\beta}}{n^{1/2}}\right)&= P\left(n^{1/2}\frac{\hat{\psi}_{\pi^\ell}-\psi_{\pi^\ell}}{\hat{\kappa}_{\pi^\ell}}>z_{\alpha,\beta}\right).
\end{align*}
By Condition~\ref{cond:non_vanish_stdev}, $\hat{\kappa}_{\pi^\ell}$ is a consistent estimator for $\kappa_{\pi^\ell}(P_0)$. Then with Slutsky's Theorem, $n^{1/2}\frac{\hat{\psi}_{\pi^\ell}-\psi_{\pi^\ell}}{\hat{\kappa}_{\pi^\ell}}\rightsquigarrow \mathbb{G}f_{\pi^\ell}$, so by definition of $z_{\alpha,\beta}$, $P\left(n^{1/2}\frac{\hat{\psi}_{\pi^\ell}-\psi_{\pi^\ell}}{\hat{\kappa}_{\pi^\ell}}>z_{\alpha,\beta}\right)\leq (\alpha-\beta)/2$, and so 
\begin{align*}
     \limsup_{n\to\infty} \P\left\{\inf_{\pi\in \Pi^*} \psi_\pi <\inf_{\pi\in\hat{\Pi}_\beta}\left[\hat{\psi}_\pi - \frac{\hat{\kappa}_\pi z_{\alpha,\beta}}{n^{1/2}}\right],\Pi^*\subseteq \hat{\Pi}_\beta\right\}\le (\alpha-\beta)/2.
\end{align*}
By a symmetric argument, we also have $\limsup_{n\to\infty}P\left\{\sup_{\pi\in \Pi^*} \psi_\pi >\sup_{\pi\in\hat{\Pi}_\beta}\left[\hat{\psi}_\pi + \frac{\hat{\kappa}_\pi z_{\alpha,\beta}}{n^{1/2}}\right],\Pi^*\subseteq \hat{\Pi}_\beta\right\}\leq (\alpha-\beta)/2$. Therefore, an asymptotic $1-\alpha$ confidence interval for $[\psi^l_0,\psi^u_0]$ is
$$\left[\inf_{\pi\in\hat{\Pi}_\beta}\left\{\hat{\psi}_\pi - \frac{\hat{\kappa}_\pi z_{\alpha,\beta}}{n^{1/2}}\right\}, \sup_{\pi\in\hat{\Pi}_\beta}\left\{\hat{\psi}_\pi + \frac{\hat{\kappa}_\pi z_{\alpha,\beta}}{n^{1/2}}\right\}\right].$$ 
\end{proof}

\begin{proof}[Proof of Lemma~\ref{lem:conv_rate_CI}]
To show this lemma, we first define two events $\{\Pi^*\subseteq\hat{\Pi}_\beta\}$ and $\{\omega_{\pi^*}-\inf_{\pi\in\hat{\Pi}_\beta}\omega_\pi\leq \frac{4t_{\beta}}{n^{1/2}}\sup_{\pi\in\Pi}\hat{\sigma}_\pi\}$. These events ensure that all $\Omega$-optimal policies are contained in $\hat{\Pi}_\beta$, and $\hat{\Pi}_\beta$ only contains nearly optimal policies. Lemma~\ref{lem:Pi1b} and \ref{lem:PiHat1_b} ensure that both events happen with probability at least $1-\beta$ asymptotically. The lemma below ensures that our confidence interval shrinks at an $n^{-1/2}$ rate under these events. 
\end{proof}
\begin{lemma}
In the setting of Lemma~\ref{lem:conv_rate_CI}, under the event $\{\Pi^*\subseteq\hat{\Pi}_\beta\}$ and $\{\omega_{\pi^*}-\inf_{\pi\in\hat{\Pi}_\beta}\omega_\pi\leq \frac{4t_{\beta}}{n^{1/2}}\sup_{\pi\in\Pi}\hat{\sigma}_\pi\}$, the width of the confidence interval for $\psi_0$ is $O_p(n^{-1/2})$.
\end{lemma}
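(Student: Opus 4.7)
The plan is to split the width of $\operatorname{CI}_n$ as
\begin{align*}
W_n = \Bigl[\sup_{\pi\in\hat{\Pi}_\beta}\hat{\psi}_\pi - \inf_{\pi\in\hat{\Pi}_\beta}\hat{\psi}_\pi\Bigr] + \frac{2 z_{\alpha,\beta}}{n^{1/2}}\sup_{\pi\in\hat{\Pi}_\beta}\hat{\kappa}_\pi =: R_n + T_n.
\end{align*}
The term $T_n$ is immediately $O_p(n^{-1/2})$: by Condition~\ref{cond:non_vanish_stdev}, $\sup_{\pi\in\Pi}\kappa_\pi(P_0)<\infty$ and $\hat{\kappa}_\pi$ is uniformly consistent, so $\sup_{\pi\in\hat{\Pi}_\beta}\hat{\kappa}_\pi = O_p(1)$. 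All of the work therefore reduces to showing $R_n = O_p(n^{-1/2})$.

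For $R_n$, I would use $\hat{\psi}_\pi-\hat{\psi}_{\pi'} = (\psi_\pi-\psi_{\pi'}) + [(\hat{\psi}_\pi-\psi_\pi) - (\hat{\psi}_{\pi'}-\psi_{\pi'})]$ and bound each piece uniformly over $\pi,\pi'\in\hat{\Pi}_\beta$. The population gap is the place where Condition~\ref{cond:boundedness} enters: in the covariate-adjusted mean setting one has $\psi_\pi-\psi_{\pi'} = \int(\pi-\pi')s_{b,0}\,dP_0$, so $|\psi_\pi-\psi_{\pi'}|\le C_3\int|q_{b,0}|\mathbb{1}\{\pi\ne\pi'\}\,dP_0$. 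Fixing any $\pi^*\in\Pi^*$ and using $\mathbb{1}\{\pi\ne\pi'\}\le\mathbb{1}\{\pi\ne\pi^*\}+\mathbb{1}\{\pi'\ne\pi^*\}$ together with the identity $\omega_{\pi^*}-\omega_\pi = \int|q_{b,0}|\mathbb{1}\{\pi\ne\pi^*\}\,dP_0$, I would obtain
\begin{align*}
\sup_{\pi,\pi'\in\hat{\Pi}_\beta}|\psi_\pi-\psi_{\pi'}| \le 2C_3\Bigl(\omega_{\pi^*}-\inf_{\pi''\in\hat{\Pi}_\beta}\omega_{\pi''}\Bigr),
\end{align*}
which the second hypothesized event bounds by $\tfrac{8C_3 t_{\beta}}{n^{1/2}}\sup_{\pi\in\Pi}\hat{\sigma}_\pi = O_p(n^{-1/2})$. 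For the centering error, I would invoke the uniform asymptotic linearity $\hat{\psi}_\pi-\psi_\pi = P_n\tilde{D}_\pi(P_0) + o_p(n^{-1/2})$ from Condition~\ref{cond:asymp_linear_est} and then control $\sup_\pi|(\mathbb{P}_n-P_0)\tilde{D}_\pi(P_0)| = O_p(n^{-1/2})$ via the $P_0$-Donsker property of $\tilde{\mathcal{F}}$ analogous to Lemma~\ref{lem:F_P_Donsk}. Adding these two bounds gives $R_n = O_p(n^{-1/2})$, finishing the proof.

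The main obstacle is that Condition~\ref{cond:asymp_linear_est} only states uniform asymptotic linearity of $\hat{\psi}_\pi$ over $\Pi^*$, whereas the argument above requires uniformity over $\hat{\Pi}_\beta\supseteq\Pi^*$. Two natural routes exist: strengthen the hypothesis to require asymptotic linearity of $\hat{\psi}_\pi$ over all of $\Pi$ (the natural mirror of the condition on $\hat{\omega}_\pi$, and the version that follows from standard Donsker-plus-product-of-rates arguments), or localize by covering $\hat{\Pi}_\beta$ with elements of $\Pi^*$ at vanishing $L^2(P_0)$-radius, exploiting the fact that the $\omega$-gap bound in the second hypothesized event pins $\hat{\Pi}_\beta$ to a shrinking neighborhood of $\Pi^*$ and applying the Donsker maximal inequality on these shrinking balls. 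Either route requires Condition~\ref{cond:bounded_phi}-style regularity on the gradient map to ensure $\tilde{\mathcal{F}}$ is well-behaved enough to provide the necessary modulus of continuity.
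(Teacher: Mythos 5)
Your proof is correct and follows essentially the same route as the paper's: the same decomposition of the interval width into a population $\Psi$-gap over $\hat{\Pi}_\beta$ plus a uniform estimation-error and half-width term, the same use of Condition~\ref{cond:boundedness} together with the unrestricted-optimum assumption to convert the $\Psi$-gap into the $\Omega$-gap controlled by the second hypothesized event, and the same appeal to Condition~\ref{cond:asymp_linear_est} for the centering error. The uniformity issue you flag is genuine but is present in the paper's own argument as well, which bounds $\sup_{\pi\in\Pi}[\hat{\psi}_\pi-\psi_\pi]$ by citing Condition~\ref{cond:asymp_linear_est} even though that condition only asserts the expansion of $\hat{\psi}_\pi$ uniformly over $\Pi^*$, so your proposed remedies (strengthening the condition to all of $\Pi$, or localizing to a shrinking neighborhood of $\Pi^*$) would equally repair the paper's version.
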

\begin{proof}
We first show that $\sup_{\pi\in\hat{\Pi}_\beta}\left[\hat{\psi}_\pi+\frac{\widehat{\kappa}_\pi z_{\alpha,\beta}}{n^{1/2}}\right]=\psi_0+O_p(n^{-1/2})$. We know that $$\sup_{\pi\in\hat{\Pi}_\beta}\left[\hat{\psi}_\pi+\frac{\widehat{\kappa}_\pi z_{\alpha,\beta}}{n^{1/2}}\right]\leq \sup_{\pi\in\hat{\Pi}_\beta}\psi_\pi+\sup_{\pi\in\hat{\Pi}_\beta}\left[\hat{\psi}_\pi-\psi_\pi+\frac{\widehat{\kappa}_\pi z_{\alpha,\beta}}{n^{1/2}}\right].$$
We then show that $\sup_{\pi\in\Pi^*}\psi_\pi-\sup_{\pi\in\hat{\Pi}_\beta}\psi_\pi=O_p(n^{-1/2})$. Consider some $\pi_1\in\Pi^*$ and $\pi_2\in\hat{\Pi}_\beta$. Let $B_{1,0}=\{x\in\X: \pi_1(x)=1, \pi_2(x)=0\}$ and $B_{0,1}=\{x\in\X: \pi_1(x)=0, \pi_2(x)=1\}$. By the definition of $\Pi^*$ we know that $\omega_{\pi_1}\geq \omega_{\pi_2}$, and
\begin{align*}
    \omega_{\pi_1}-\omega_{\pi_2}&=\int \E[Y^*|A=\pi_1(x), x]dP_0(x)-\int \E[Y^*|A=\pi_2(x), x]dP_0(x)\\
    &= \int_{B_{1,0}} q_{b,0}(x)dP_0(x)-\int_{B_{0,1}} q_{b,0}(x)dP_0(x).
\end{align*}
Since $\pi_1\in\Pi^*$ and $\Pi^*$ contains unrestricted optimal policies by assumption, $\omega_{\pi_1}$ is largest among all $\pi\in\Pi$, which implies that for $x\in B_{1,0}$, $q_{b,0}(x)\geq 0$ and for $x\in B_{0,1}$, $q_{b,0}(x)\leq 0$. This gives us 
\begin{align*}
    \omega_{\pi_1}-\omega_{\pi_2}&= \int_{B_{1,0}} |q_{b,0}(x)|dP_0(x)+\int_{B_{0,1}} |q_{b,0}(x)|dP_0(x).
\end{align*}
On the other hand, on the event $\{\Pi^*\subseteq \hat{\Pi}_\beta\}$, we have $\sup_{\pi\in\hat{\Pi}_\beta} \psi_\pi\geq \sup_{\pi\in\Pi^*} \psi_\pi$, and 
\begin{align*}
    \left|\psi_{\pi_2}-\psi_{\pi_1}\right|&=\left|\int \E[Y^\dagger|A=\pi_2(x), x]dP_0(x)-\int \E[Y^\dagger|A=\pi_1(x), x]dP_0(x)\right|\\
    &= \left|\int_{B_{0,1}} s_{b,0}(x)dP_0(x)-\int_{B_{1,0}} s_{b,0}(x)dP_0(x)\right|\\
    &\leq \int_{B_{1,0}} |s_{b,0}(x)|dP_0(x)+\int_{B_{0,1}} |s_{b,0}(x)|dP_0(x)\\
    &\leq C\int_{B_{1,0}} |q_{b,0}(x)|dP_0(x)+C\int_{B_{0,1}} |q_{b,0}(x)|dP_0(x).
\end{align*}
Therefore, $|\psi_{\pi_2}-\psi_{\pi_1}|\leq C(\omega_{\pi_1}-\omega_{\pi_2})$ for some $C<\infty$. Since this holds for any $\pi_1\in\Pi^*$ and $\pi_2\in\hat{\Pi}_\beta$, we have 
$\sup_{\pi\in\hat{\Pi}_\beta}\psi_\pi-\inf_{\pi\in\Pi^*}\psi_{\pi}\leq C(\sup_{\pi\in\Pi^*}\omega_\pi-\inf_{\pi\in\hat{\Pi}_\beta}\omega_\pi)$. 
Under the event $\{\omega_{\pi^*}-\inf_{\pi\in\hat{\Pi}_\beta}\omega_\pi\leq \frac{4t_{\beta}}{n^{1/2}}\sup_{\pi\in\Pi}\hat{\sigma}_\pi\}$, we have that $\sup_{\pi\in\hat{\Pi}_\beta}\psi_\pi-\inf_{\pi\in\Pi^*}\psi_{\pi}\leq C\frac{4t_{\beta}}{n^{1/2}}\sup_{\pi\in\Pi}\hat{\sigma}_\pi$. Under Condition~\ref{cond:non_vanish_stdev}, we know that $\sup_{\pi\in\Pi}\hat{\sigma}_\pi-\sup_{\pi\in\Pi}\sigma_\pi(P_0)=o_p(1)$, so 
\begin{equation}\label{eqn:66}
    \sup_{\pi\in\hat{\Pi}_\beta}\psi_\pi-\inf_{\pi\in\Pi^*}\psi_{\pi}\leq C\frac{4t_{\beta}}{n^{1/2}}\sup_{\pi\in\Pi}\hat{\sigma}_\pi=C\frac{4t_{\beta}}{n^{1/2}}\left(\sup_{\pi\in\Pi}\sigma_\pi(P_0)+o_p(n^{-1/2})\right)=O_p(n^{-1/2}). 
\end{equation}
Under the event $\{\Pi^*\subseteq \hat{\Pi}_\beta\}$, we have $\sup_{\pi\in\hat{\Pi}_\beta}\psi_\pi\geq \sup_{\pi\in\Pi^*}\psi_{\pi}\geq \inf_{\pi\in\Pi^*}\psi_{\pi}$, so we have $\sup_{\pi\in\Pi^*}\psi_\pi-\sup_{\pi\in\hat{\Pi}_\beta}\psi_\pi=O_p(n^{-1/2})$. 
Also, 
\begin{align*}
    \sup_{\pi\in\hat{\Pi}_\beta}\left[\hat{\psi}_\pi-\psi_\pi+\frac{\widehat{\kappa}_\pi z_{\alpha,\beta}}{n^{1/2}}\right]&\leq \sup_{\pi\in\Pi}\left[\hat{\psi}_\pi-\psi_\pi+\frac{\widehat{\kappa}_\pi z_{\alpha,\beta}}{n^{1/2}}\right]\leq \sup_{\pi\in\Pi}\left[\hat{\psi}_\pi-\psi_\pi\right]+\sup_{\pi\in\Pi}\frac{\widehat{\kappa}_\pi z_{\alpha,\beta}}{n^{1/2}}.
\end{align*}
The first term is $O_p(n^{-1/2})$ under Condition~\ref{cond:asymp_linear_est}. As for the second term, under Condition~\ref{cond:non_vanish_stdev}, 
\[\sup_{\pi\in\Pi}\frac{\widehat{\kappa}_\pi z_{\alpha,\beta}}{n^{1/2}}=\sup_{\pi\in\Pi}\frac{\kappa_\pi(P_0) z_{\alpha,\beta}}{n^{1/2}}+o_p(n^{-1/2})=O_p(n^{-1/2}).\]
Therefore, $\sup_{\pi\in\hat{\Pi}_\beta}\left[\hat{\psi}_\pi-\psi_\pi+\frac{\widehat{\kappa}_\pi z_{\alpha,\beta}}{n^{1/2}}\right]=O_p(n^{-1/2})$ and so $\sup_{\pi\in\hat{\Pi}_\beta}\left[\hat{\psi}_\pi+\frac{\widehat{\kappa}_\pi z_{\alpha,\beta}}{n^{1/2}}\right]=\psi_0^u+O_p(n^{-1/2})$ as desired. By symmetry, $\inf_{\pi\in\hat{\Pi}_\beta}\left[\hat{\psi}_\pi-\frac{\widehat{\kappa}_\pi z_{\alpha,\beta}}{n^{1/2}}\right]=\psi_0-O_p(n^{-1/2})$ as well.
\end{proof}

\begin{proof}[Proof of Theorem~\ref{thm:joint_CI}]
To establish this theorem, we show that 
\[\liminf_{n}\P\left(\sup_{\pi\in\Pi^*}\psi_\pi\leq\sup_{\pi\in\hat{\Pi}^\dagger}\left[\hat{\psi}_\pi+\frac{\hat{\kappa}_\pi u_{\alpha}^\dagger}{n^{1/2}}\right]\right)\geq 1-\alpha/2.\]
We can similarly get 
\[\liminf_{n}\P\left(\inf_{\pi\in\Pi^*}\psi_\pi\geq\inf_{\pi\in\hat{\Pi}^\dagger}\left[\hat{\psi}_\pi-\frac{\hat{\kappa}_\pi u_{\alpha}^\dagger}{n^{1/2}}\right]\right)\geq 1-\alpha/2.\]
Combining the two displays gives us the theorem statement.
Note that
\begin{align*}
    &\left\{\sup_{\pi\in\Pi^*}\psi_\pi\leq\sup_{\pi\in\hat{\Pi}^\dagger}\left[\hat{\psi}_\pi+\frac{\hat{\kappa}_\pi u_{\alpha}^\dagger}{n^{1/2}}\right]\right\}\\
    &\supseteq \left\{\sup_{\pi\in\Pi^*}\psi_\pi\leq\sup_{\pi\in\hat{\Pi}^\dagger}\left[\hat{\psi}_\pi+\frac{\hat{\kappa}_\pi u_{\alpha}^\dagger}{n^{1/2}}\right],\Pi^*\subseteq \hat{\Pi}^\dagger\right\}.
\end{align*}
Since $\Pi^*$ is $P_0$-Donsker following from $\Pi$ being $P_0$-Donsker, $\Pi^*$ is totally bounded in $L^2(P_0)$ \cite{Alex16}. Also, since $L^2(P_0)$ is complete, $\Pi^*$ being closed in $L^2(P_0)$ implies that $\Pi^*$ is complete in $L^2(P_0)$. So $\Pi^*$ is compact in $L^2(P_0)$. Combining this with the fact that $\pi\mapsto \psi_\pi$ is continuous implies that there exists a $\pi^u\in \Pi^*$ such that $\psi_{\pi^u}=\sup_{\pi\in \Pi^*} \psi_\pi$. Combining this with the above, we see that
\begin{align}
    &\left\{\sup_{\pi\in\Pi^*}\psi_\pi\leq\sup_{\pi\in\hat{\Pi}^\dagger}\left[\hat{\psi}_\pi+\frac{\hat{\kappa}_\pi u_{\alpha}^\dagger}{n^{1/2}}\right]\right\}\nonumber\\
    &\supseteq\left\{\psi_{\pi^u}\leq\sup_{\pi\in\hat{\Pi}^\dagger}\left[\hat{\psi}_\pi+\frac{\hat{\kappa}_\pi u_{\alpha}^\dagger}{n^{1/2}}\right],\Pi^*\subseteq \hat{\Pi}^\dagger\right\}\nonumber\\
    &\supseteq\left\{\psi_{\pi^u}\leq\hat{\psi}_{\pi^u}+\frac{\hat{\kappa}_{\pi^u} u_{\alpha}^\dagger}{n^{1/2}},\Pi^*\subseteq \hat{\Pi}^\dagger\right\}\nonumber\\
    &= \left\{\psi_{\pi^u}\le\hat{\psi}_{\pi^u}+\frac{\hat{\kappa}_{\pi^u} u_{\alpha}^\dagger}{n^{1/2}},\omega_{\pi'}<\sup_{\pi\in\Pi}\omega_\pi,\forall \pi'\in (\hat{\Pi}^\dagger)^C\right\}.\label{eqn:joint_event}
\end{align}
Note that 
\begin{align}
    &\left\{\omega_{\pi'}<\sup_{\pi\in\Pi}\omega_\pi,\forall \pi'\in (\hat{\Pi}^\dagger)^C\right\}^C=\left\{\exists \pi'\in (\hat{\Pi}^\dagger)^C:  \omega_{\pi'}=\sup_{\pi\in\Pi}\omega_\pi\right\}\nonumber\\
    &\subseteq \left\{\exists\pi'\in(\hat{\Pi}^\dagger)^C: \left[\omega_{\pi'}- \hat{\omega}_{\pi'} - \frac{\hat{\sigma}_{\pi'} t_{\alpha}^\dagger}{n^{1/2}} + \sup_{\pi\in\Pi} \left[\hat{\omega}_\pi-\frac{\hat{\sigma}_\pi s_{\alpha}^\dagger}{n^{1/2}}\right]\right] > \sup_{\pi\in \Pi} \omega_\pi\right\} \nonumber \\
    &= \left\{\exists\pi'\in(\hat{\Pi}^\dagger)^C: \left[\omega_{\pi'} - \hat{\omega}_{\pi'} - \frac{\hat{\sigma}_{\pi'} t_{\alpha}^\dagger}{n^{1/2}}\right] > \sup_{\pi\in \Pi} \omega_\pi - \sup_{\pi\in\Pi} \left[\hat{\omega}_\pi-\frac{\hat{\sigma}_\pi s_{\alpha}^\dagger}{n^{1/2}}\right] \right\},\label{eq:Pin_C} 
\end{align}
where the inclusion follows from the 
definition of $\hat{\Pi}^\dagger$. Let $\mathcal{A}'$ denote the event $$\left\{\sup_{\pi\in\Pi} \left[\hat{\omega}_\pi-\frac{\hat{\sigma}_\pi s_{\alpha}^\dagger}{n^{1/2}}\right]\le \sup_{\pi\in \Pi}\omega_\pi \right\}\bigcap\left[\bigcap_{\pi\in\Pi}\left\{\omega_\pi \leq\hat{\omega}_\pi + \frac{\hat{\sigma}_\pi t_{\alpha}^\dagger}{n^{1/2}}\right\}\right].$$ Hence, \eqref{eq:Pin_C} shows that
\begin{align*}
    &\left\{\exists \pi'\in (\hat{\Pi}^\dagger)^C:  \omega_{\pi'}=\sup_{\pi\in\Pi}\omega_\pi\right\} \\
    &\subseteq \left[\left\{\exists\pi'\in(\hat{\Pi}^\dagger)^C: \omega_{\pi'} - \hat{\omega}_{\pi'} - \frac{\hat{\sigma}_{\pi'} t_\alpha^\dagger}{n^{1/2}} > \sup_{\pi\in \Pi} \omega_\pi - \sup_{\pi\in\Pi} \left[\hat{\omega}_\pi-\frac{\hat{\sigma}_\pi s_{\alpha}^\dagger}{n^{1/2}}\right] \right\}\cap\mathcal{A}'\right]\cup\mathcal{A}'^C \\
    &\subseteq \left[\left\{\exists\pi'\in(\hat{\Pi}^\dagger)^C: \omega_{\pi'}- \hat{\omega}_{\pi'} - \frac{\hat{\sigma}_{\pi'} t_{\alpha}^\dagger}{n^{1/2}} > 0\right\}\cap\mathcal{A}'\right]\cup\mathcal{A}'^C \\
    &= \mathcal{A}'^C.
\end{align*}
For each $\pi\in\Pi$, we define $\widehat{B}_{n,\pi}:=n^{1/2}\frac{\hat{\omega}_\pi-\omega_\pi}{\hat{\sigma}_\pi}$ and $\widetilde{B}_{n,\pi}:=n^{1/2}\frac{\hat{\psi}_\pi-\psi_\pi}{\hat{\kappa}_\pi}$. Then starting from \eqref{eq:Pin_C}, we have
\begin{align*}
    \left\{\omega_{\pi'}<\sup_{\pi\in\Pi}\omega_\pi,\forall \pi'\in (\hat{\Pi}^\dagger)^C\right\} &\supseteq \A' =\left\{\sup_{\pi\in\Pi} \left[\hat{\omega}_\pi-\frac{\hat{\sigma}_\pi s_{\alpha}^\dagger}{n^{1/2}}\right]< \sup_{\pi\in\Pi}\omega_\pi\right\}\bigcap \left[\bigcap_{\pi\in\Pi}\left\{ \omega_\pi\leq\hat{\omega}_\pi+\frac{\hat{\sigma}_\pi t_{\alpha}^\dagger}{n^{1/2}}\right\}\right]\\
    &\supseteq \bigcap_{\pi\in\Pi}\left\{ \hat{\omega}_\pi-\frac{\hat{\sigma}_\pi s_{\alpha}^\dagger}{n^{1/2}}<\omega_\pi<\hat{\omega}_\pi+\frac{\hat{\sigma}_\pi t_{\alpha}^\dagger}{n^{1/2}}\right\}\\
    &= \bigcap_{\pi\in\Pi}\left\{ -t_{\alpha}^\dagger<n^{1/2}\frac{\hat{\omega}_\pi-\omega_\pi}{\hat{\sigma}_\pi}<s_{\alpha}^\dagger\right\}\\
    &= \bigcap_{\pi\in\Pi}\left\{ -t_{\alpha}^\dagger<B_{n,\pi}<s_{\alpha}^\dagger\right\}\\
    &= \{ -t_{\alpha}^\dagger\leq\inf_{\pi\in\Pi}B_{n,\pi}\}\cap \{\sup_{\pi\in\Pi}B_{n,\pi}\leq s_{\alpha}^\dagger\}.
\end{align*}
Using the above to study the event on the right-hand side of \eqref{eqn:joint_event} shows that  
\begin{align}
    &\left\{\psi_{\pi^u}\le\hat{\psi}_{\pi^u}+\frac{\hat{\kappa}_{\pi^u} u_{\alpha}^\dagger}{n^{1/2}},\omega_{\pi'}<\sup_{\pi\in\Pi}\omega_\pi,\forall \pi'\in (\hat{\Pi}^\dagger)^C\right\}\nonumber\\
    &\supseteq \left\{\psi_{\pi^u}<\hat{\psi}_{\pi^u}+\frac{\hat{\kappa}_{\pi^u}u_{\alpha}^\dagger}{n^{1/2}},-t_{\alpha}^\dagger\leq\inf_{\pi\in\Pi}B_{n,\pi},\sup_{\pi\in\Pi}B_{n,\pi}\leq s_{\alpha}^\dagger\right\}\nonumber\\
    &= \left\{\widetilde{B}_{n,\pi^u}>-u_{\alpha}^\dagger,-t_{\alpha}^\dagger\leq \inf_{\pi\in\Pi}B_{n,\pi},\sup_{\pi\in\Pi}B_{n,\pi}\leq s_{\alpha}^\dagger\right\}\label{eqn:inc}.
\end{align}
We know that the choices $(s_{\alpha}^\dagger, t_{\alpha}^\dagger,u_{\alpha}^\dagger)$ satisfy that
\begin{align}
    \inf_{\pi\in\Pi} \P\left\{\inf_{f\in\mathcal{F}}\mathbb{G}f\ge -t_{\alpha}^\dagger,\sup_{f\in\mathcal{F}}\mathbb{G}f\le s_{\alpha}^\dagger,\mathbb{G}\tilde{f}_\pi\ge -u_{\alpha}^\dagger\right\}\ge 1-\alpha/2.\label{eqn:new_joint_cutoff}
\end{align}
Note that by Condition~\ref{cond:asymp_linear_est}, we have 
$\sup_{\pi\in\Pi}\left[n^{1/2}\frac{\hat{\omega}_\pi-\omega_\pi}{\hat{\sigma}_\pi}-\GG_n f_\pi\right]=o_p(1)$ and also $\frac{\hat{\psi}_{\pi^u}-\psi_{\pi^u}}{\hat{\kappa}_{\pi^u}}-\GG_n \tilde{f}_{\pi^u}=o_p(1)$. 
Since $\sup_{f\in\F}\GG_n f\rightsquigarrow \sup_{f\in\F}\GG f$, $\inf_{f\in\F}\GG_n f\rightsquigarrow \inf_{f\in\F}\GG f$, and for each $\pi\in\Pi$, $\hat{\sigma}_\pi$ is a consistent estimator of $\sigma_\pi$, by Slutsky Theorem, we have $\sup_{\pi\in\Pi}B_{n,\pi}\rightsquigarrow \sup_{f\in\F}\GG f$ and $\inf_{\pi\in\Pi}B_{n,\pi}\rightsquigarrow \inf_{f\in\F}\GG f$. Also, since for each $\tilde{f}\in\tilde{\F}$, $\GG_n \tilde{f}\rightsquigarrow \GG \tilde{f}$ and $\hat{\sigma}_\pi$ is a consistent estimator of $\sigma_\pi$, we similarly have $\widetilde{B}_{n,\pi^u}\rightsquigarrow \GG \tilde{f}_{\pi^u}$. 
Combining \eqref{eqn:joint_event}, \eqref{eqn:inc}, and \eqref{eqn:new_joint_cutoff}, we have 
\begin{align*}
    &\liminf_{n}\P\left(\sup_{\pi\in\Pi^*}\psi_\pi<\sup_{\pi\in\hat{\Pi}}\left[\hat{\psi}_\pi+\frac{\hat{\kappa}_\pi u_{\alpha}^\dagger}{n^{1/2}}\right]\right)\\
    &\geq \liminf_{n}\P\left(\widetilde{B}_{n,\pi^u}<u_{\alpha}^\dagger,-s_{\alpha}^\dagger<\inf_{\pi\in\Pi}B_{n,\pi},\sup_{\pi\in\Pi}B_{n,\pi}<t_{\alpha}^\dagger\right)\\
    &\to \P\left(\GG \tilde{f}_{\pi^u}<u_{\alpha}^\dagger,-s_{\alpha}^\dagger<\inf_{f\in\F}\GG f,\sup_{f\in\F}\GG f<t_{\alpha}^\dagger\right)\\
    &\geq \inf_{\pi\in\Pi}\P\left(\GG \tilde{f}_{\pi}<u_{\alpha}^\dagger,-s_{\alpha}^\dagger<\inf_{f\in\F}\GG f,\sup_{f\in\F}\GG f<t_{\alpha}^\dagger\right)=1-\alpha/2.
\end{align*} 
\end{proof}

\section{Multiplier bootstrap}\label{sec:pseudo_mb}

In practice, we use multiplier bootstrap to estimate the quantiles described in Section~\ref{sec:general_inference} and we provide the pseudocodes of the algorithms below. Algorithm~\ref{alg:mb} estimates $t_\beta$ defined just above Lemma~\ref{lem:Pi1b}. Algorithm~\ref{alg:mb_joint} estimates the quantiles described in \eqref{eqn:joint_cutoff}. In this algorithm, we take $s_\alpha^\dagger=t_\alpha^\dagger$ for simplicity and estimate the best $(t_\alpha^\dagger,u_\alpha^\dagger)$ given samples. Both algorithms approximate suprema and infima over sets indexed by $\pi\in \Pi$ by maxima and minima over $\pi$ belonging to a grid approximation of $\Pi$.
\begin{algorithm}[!htb]
\small
\caption{Multiplier bootstrap}
\begin{algorithmic}[1]\label{alg:mb}
\REQUIRE samples $\{(x_i,a_i,y_i)\}_{i=1}^n$, policy set $\Pi$, bootstrap sample size $B$, confidence level $\beta$
\STATE Take a grid estimate $\{\pi_1,\cdots,\pi_K\}$ of $\Pi$
\STATE for each $k\in [K]$, compute normalized one-step estimates $\{o_i^{(\pi_k)}\}_{i=1}^n$ using collected samples $\{(x_i,a_i,y_i)\}_{i=1}^n$
\FOR{$j=1,\cdots,B$}
\STATE get multiplier bootstrap samples $\epsilon_{ij}$ for $i=1,\cdots,n$ and $k=1,\cdots,K$
\STATE compute $n^{-1/2}\sum_{i=1}^n \epsilon_{ij}o_i^{(\pi_k)}$ and denote the result as $f_{\pi_k}^{(j)}$
\ENDFOR
\STATE Apply quantile normalization to $f_{\pi_k}^{(j)}$ across $j$ for each policy $\pi_k$
\STATE compute $\max_{k\in[K]} f_{\pi_k}^{(j)}$ for each $j$ and denote the resulting dataset as $\{t_i\}_{i=1}^B$
\ENSURE $(1-\beta)$-th quantile of $\{t_i\}_{i=1}^B$
\end{algorithmic}
\end{algorithm}

\begin{algorithm}[!htb]
\small
\caption{Multiplier bootstrap for joint probability}
\begin{algorithmic}[1]\label{alg:mb_joint}
\REQUIRE samples $\{(x_i,a_i,y_i,z_i)\}_{i=1}^n$, policy set $\Pi$, bootstrap sample size $B$, confidence level $\alpha$
\STATE Take a grid estimate $\{\pi_1,\cdots,\pi_K\}$ of $\Pi$
\FOR{$k\in [K]$}
\STATE compute normalized one-step estimates $\{o_i^{(\pi_k)}\}_{i=1}^n$ using collected samples $\{(x_i,a_i,y_i)\}_{i=1}^n$
\STATE compute normalized one-step estimates $\{\tilde{o}_i^{(\pi_k)}\}_{i=1}^n$ using collected samples $\{(x_i,a_i,z_i)\}_{i=1}^n$
\ENDFOR

\FOR{$j=1,\cdots,B$}
\STATE get multiplier bootstrap samples $\epsilon_{ik}^{(j)}$ for $i=1,\cdots,n$ and $k=1,\cdots,K$
\STATE compute $n^{-1/2}\sum_{i=1}^n \epsilon_{ik}^{(j)}o_i^{(\pi_k)}$ and denote the result as $f_{\pi_k}^{(j)}$
\STATE compute $n^{-1/2}\sum_{i=1}^n \epsilon_{ik}^{(j)}\tilde{o}_i^{(\pi_k)}$ and denote the result as $\tilde{f}_{\pi_k}^{(j)}$
\ENDFOR
\STATE Apply quantile normalization to $f_{\pi_k}^{(j)}$ and $\tilde{f}_{\pi_k}^{(j)}$ across $j$ for each policy $\pi_k$
\STATE compute $\max_{k\in[K]} f_{\pi_k}^{(j)}$ for each $j$ and denote the results as $\{s_j\}_{j=1}^B$
\STATE compute probability $\P(\max_{k\in[K]}f_{\pi_k}\leq t, \tilde{f}_{\pi_k}\leq u)$ for each $k=1,\cdots,K$ using the $B$ samples
\ENSURE pairs $(t,u)$ such that $\min_{k\in[K]}\P(\max_{k\in[K]}f_{\pi_k}\leq t, \tilde{f}_{\pi_k}\leq u)=1-\alpha$.
\end{algorithmic}
\end{algorithm}
\end{document}